\documentclass[12pt,francais]{article}
\usepackage{amsmath}\usepackage{epsf,amsfonts,amsthm}\usepackage{amscd,amssymb}
\usepackage{xcolor,epic,eepic}\usepackage{epsfig}
\usepackage{indentfirst, delarray}
\usepackage{rotating}
\usepackage{mathdots, mathtools}
\usepackage[normalem]{ulem}
\usepackage[matrix,arrow,curve]{xy}
\usepackage{graphicx}
\usepackage{fancyhdr}
\usepackage{dsfont,texdraw}
\usepackage{mathrsfs}
\usepackage{latexsym}
\usepackage{hyperref}
\usepackage{tikz-cd}
\usepackage{tikz, graphicx}
\newsavebox{\pullback}
\sbox\pullback{%
\begin{tikzpicture}%
\draw (0,0) -- (1ex,0ex);%
\draw (1ex,0ex) -- (1ex,1ex);%
\end{tikzpicture}}
\theoremstyle{plain}

\usepackage{tikz}
\usetikzlibrary{matrix,arrows,decorations.pathmorphing}
\usepackage{scalerel,stackengine}
\usetikzlibrary{arrows,chains,matrix,positioning,scopes,snakes}
\makeatletter
\tikzset{join/.code=\tikzset{after node path={%
\ifx\tikzchainprevious\pgfutil@empty\else(\tikzchainprevious)%
edge[every join]#1(\tikzchaincurrent)\fi}}}
\makeatother

\tikzset{>=stealth',every on chain/.append style={join},
         every join/.style={->}}
\tikzset{
    >=stealth',
    punkt/.style={
           rectangle,
           rounded corners,
           draw=black, very thick,
           text width=6.5em,
           minimum height=2em,
           text centered},
    pil/.style={
           ->,
           thick,
           shorten <=2pt,
           shorten >=2pt,}
}

\setcounter{tocdepth}{5}

\setcounter{page}{1}

\tolerance=500 \textwidth15.6cm \textheight21cm \hoffset-1.6cm

\newcommand{\longsquiggly}{\xymatrix{{}\ar@{~>}[r]&{}}}

\newcommand{\bb}{\bullet}
\newcommand{\bee}{\begin{enumerate}}
\newcommand{\eee}{\end{enumerate}}
\newcommand{\benn}{\begin{equation*}}
\newcommand{\eenn}{\end{equation*}}
\newcommand{\be}{\begin{equation}}
\newcommand{\ee}{\end{equation}}
\newcommand{\bean}{\begin{eqnarray}}
\newcommand{\eean}{\end{eqnarray}}
\newcommand{\bea}{\begin{eqnarray*}}
\newcommand{\eea}{\end{eqnarray*}}

\newcommand{\ff}{\mathfrak{f}}
\newcommand{\fF}{\mathfrak{F}}

\newcommand{\Z}{\mathbb{Z}}
\newcommand{\R}{\mathbb{R}}

\newcommand{\cF}{{\cal F}}

\newcommand{\op}[1]{\!\!\mathop{\rm ~#1}\nolimits}
\newcommand{\id}{\op{id}}

\newcommand{\zig}{\approx }
\newcommand{\s}{{\scriptscriptstyle\to}}
\mathchardef\za="710B  
\mathchardef\zb="710C  
\mathchardef\zg="710D  
\mathchardef\zd="710E  
\mathchardef\zve="710F 
\mathchardef\zz="7110  
\mathchardef\zh="7111  
\mathchardef\zy="7112 

\mathchardef\zi="7113  
\mathchardef\zk="7114  
\mathchardef\zl="7115  
\mathchardef\zm="7116  
\mathchardef\zn="7117  
\mathchardef\zx="7118  
\mathchardef\zp="7119  
\mathchardef\zr="711A  
\mathchardef\zs="711B  
\mathchardef\zt="711C  
\mathchardef\zu="711D  
\mathchardef\zf="711E 
\mathchardef\zq="711F  
\mathchardef\zc="7120  
\mathchardef\zw="7121  
\mathchardef\ze="7122  
\mathchardef\zvy="7123  
\mathchardef\zvw="7124  
\mathchardef\zvr="7125 
\mathchardef\zvs="7126 
\mathchardef\zvf="7127  
\mathchardef\zG="7000  
\mathchardef\zD="7001  
\mathchardef\zY="7002  
\mathchardef\zL="7003  
\mathchardef\zX="7004  
\mathchardef\zP="7005  
\mathchardef\zS="7006  
\mathchardef\zU="7007  
\mathchardef\zF="7008  
\mathchardef\zW="700A  

 \newcommand{\cK}{{\cal K}}

 \newcommand{\cP}{{\cal P}}
 \newcommand{\cC}{{\cal C}}
 \newcommand{\cA}{{\cal A}}

 \newcommand{\cI}{{\cal I}}
 \newcommand{\cX}{{\cal X}}
 
 \newcommand{\cG}{{\cal G}}
 
 \newcommand{\mfa}{\mathfrak{A}}

\newtheorem{rem}{Remark}
\newtheorem{theo}{Theorem}
\newtheorem{prop}{Proposition}
\newtheorem{lem}{Lemma}
\newtheorem{cor}{Corollary}

\newtheorem{defi}{Definition}

\newcommand{\ul}{\underline}

\newcommand{\0}{\otimes}

\definecolor{cof}{RGB}{219,144,71}
\definecolor{pur}{RGB}{186,146,162}
\definecolor{greeo}{RGB}{91,173,69}
\definecolor{greet}{RGB}{52,111,72}



\pagestyle{myheadings}\markright{\emph{}}


\begin{document}

\title{A NEW APPROACH TO MODEL CATEGORICAL HOMOTOPY FIBER
SEQUENCES}
\author{Alisa Govzmann, Damjan Pi\v{s}talo, and Norbert Poncin}
\maketitle

\begin{abstract} We propose a simplified definition of Quillen's fibration sequences in a pointed model category that fully captures the theory, although it is completely independent of the concept of action. This advantage arises from the understanding that the homotopy theory of the model category's arrow category contains all homotopical information about its long fibration sequences.\end{abstract}

\small{\vspace{2mm} \noindent {\bf MSC 2020}: 18E35, 18N40, 14A30 \medskip

\noindent{\bf Keywords}: model category, homotopy theory, derived functor, homotopy pullback, homotopy fiber square, fibration sequence, Puppe sequence, long exact sequence in homotopy}

\tableofcontents

\thispagestyle{empty}

\section{Introduction}

Homotopy fiber sequences, also called fibration sequences, have been studied in the category of topological spaces, the category of chain complexes of modules, in general model categories and in homotopical categories. For instance, if $f:A\to B$ is a chain map, its shifted mapping cone $\op{Mc}(f)[-1]$ is the homotopy fiber of $f$ and the associated distinguished triangle $\op{Mc}(f)[-1]\to A\to B$ is a homotopy fiber sequence.\medskip

Quillen \cite{Quill} defined fibration sequences in the homotopy category of any pointed model category $(\tt M,0)\,.$ For this he first defined a loop space functor $\zW^Q$ from a path space functor and observed that the loop space $\zW^Q\cF$ of a fibrant object $\cF$ is a group object in the homotopy category $\tt Ho(M)$ of $\tt M\,.$ Up to  $\tt Ho(M)$--isomorphism a fibration sequence is then a $\tt Ho(M)$--sequence $K\to F\to \cF$ that is implemented by the kernel $K$ of a fibration $F\to \cF$ between fibrant objects $F$ and $\cF\,,$ together with an action that is up to isomorphism some action of the group object $\zW^Q\cF$ on $K\,.$ This action induces a connecting $\tt Ho(M)$--morphism $\zW^Q\cF\to K$ and the sequence $\zW^Q\cF\to K\to F$ is again a fibration sequence.\medskip

In this paper, we also work in a general pointed model category $({\tt M},0)\,,$ we define a loop space functor $\zW$ from any `dual cone functor' and define homotopy fiber sequences as commutative $\tt M$--squares $(A,B,C,D)$ such that $A$ is a specific type of generalized representative of the homotopy pullback of $C\to D\leftarrow B$ and the map $C\to 0$ is a weak equivalence. Further, for every morphism $f:F\to\cF$ we define its homotopy fiber $K_f$ such that $K_f\to F\to\cF$ is a homotopy fiber sequence. We get a universal connecting morphism $\zW\cF\to K_f$ such that $\zW\cF\to K_f\to F$ is also a homotopy fiber sequence.\medskip

It turns out that Quillen's loop space functor $\zW^Q$ is a loop space functor $\zW$ in the sense of the present paper. Further, an objectwise fibrant homotopy fiber sequence is a fibration sequence and our universal connecting morphism is the same as Quillen's connecting morphism induced by the action.\medskip

Although, as the previous descriptions show, the two theories are closely related, our approach to homotopy fiber sequences or fibration sequences, does not rely on the additional structure of an action. The point is that we use the homotopy theory of the category ${\tt M}^\s$ of $\tt M$--morphisms, which contains all relevant information about homotopy fiber sequences of $\tt M\,.$\medskip

More precisely, the paper is organized as follows:\medskip

Understanding homotopy fiber sequences requires a good understanding of homotopy fiber squares, homotopy pullbacks, derived functors, and category localizations. However, there are a number of variants and indeterminacies for each of these concepts. A structured approach in a unifying context is suggested in \cite{CompTheo} and \cite{Models}. In order to ensure an independent readability of our text, we recall in Section \ref{ModHoPull} the relevant results from \cite{CompTheo} and \cite{Models}, which we will need later. \medskip

In Section \ref{LoHoFibSeq} we give the precise definition of the category $\tt h(M)$ (resp., the category $\ell({\tt M})$) of homotopy fiber sequences (resp., of long homotopy fiber sequences) in a pointed model category $(\tt M,0)\,.$ We define the homotopy category of these new categories and choose a model structure on the category $\tt M^\s$ of $\tt M$--morphisms. We prove that the localization $$\op{Ho}(R_1):\tt Ho(\ell(M))\to Ho(M^\s)$$ of the restriction $R_1$ of long homotopy fiber sequences to their first two terms yields an equivalence of categories. This is arguably the deepest result of the present paper. To prove the equivalence theorem we construct the inverse up to natural isomorphisms and give an explicit description of the inverse of $$\op{Ho}(R_1)_{a_\bullet,b_\bullet}:\op{Hom}_{\tt Ho(\ell(M))}(a_\bullet,b_\bullet)\to\op{Hom}_{\tt Ho(M^\s)}(R_1(a_\bullet),R_1(b_\bullet))\;$$ (see pages 14--26).\medskip

Mimicking definitions from Algebraic Topology, we define in Section \ref{SectionPuppe} the notion of `based path space functor' -- which is dual to the `cone functor' -- and the associated `loop space functor' $\zW$ -- which is well-behaved with respect to fibrant objects, weak equivalences and homotopy fiber sequences, and whose derived functor is independent of the chosen based path space functor. We continue our abstraction of the topological situation and define concepts of homotopy fiber and universal connecting homomorphism that allow us to make sense of Puppe's sequence in the general setting considered. This leads to Puppe's functor $\cP:\tt M^\s\to\ell(M)$ whose value $\cP_f$ at $f\in{\tt M^\s}$ is the unique $\ell({\tt M})$--extension of $f$ up to a canonical $\tt Ho(\ell(M))$--isomorphism. The theory is valid in every pointed model category (in this general case it is natural to consider morphisms $f$ with fibrant source and target) and in right proper pointed model categories (in this case we do not need the property that the source and target are fibrant). Just as Puppe's sequence of a fibration gives the fibration's long exact homotopy sequence in Algebraic Topology, we associate in any pointed model category $\tt M$ a family of long exact sequences of $\,\tt Ho(M)$--Hom-sets to Puppe's $\ell(\tt M)$--extension of a morphism.\bigskip

Section \ref{ComparisonQuillen} contains a detailed account of the parallelism described above between Quillen's theory of fibration sequences and the theory of homotopy fiber sequences in the present work.\bigskip

One of the advantages of the new theory is that it is easy to use. In Section \ref{Applications} we apply it to the category of chain complexes of modules (both unbounded and non-negatively graded)  and recover the long exact sequence in homology induced by a short exact sequence of chain complexes and chain maps as a special case of the long exact sequence of $\tt Ho(M)$--Hom-sets of these chain maps. To this end, we consider the category of complexes first only as a pointed model category, then as a pointed monoidal model category. Although the two approaches naturally lead to different based path space functors, the Puppe extensions of a chain morphism coincide.\bigskip

In the final section \ref{Conclusions}, we briefly describe follow up questions and expected applications. In addition, the theory we develop in this text should lead to advances in homotopical algebraic geometry \cite{TV05, TV08, Schwarz, Linear, HAC} and higher supergeometry \cite{Berezinian, Z2nManifolds, LocalForms,Integration}, which are the contexts from which the need arose to examine the subjects of this paper.\bigskip\bigskip

{\bf Conventions and notations}. We assume that the reader is familiar with model categories and adopt the definition of a model category that is used in \cite{Hir}. More precisely, a model category is a category $\tt M$ that is equipped with three classes of morphisms called weak equivalences, fibrations and cofibrations. The category $\tt M$ has all small limits and colimits and the 2-out-of-3 axiom, the retract axiom and the lifting axiom are satisfied. Moreover $\tt M$ admits a functorial cofibration - trivial fibration factorization system and a functorial trivial cofibration - fibration factorization system. Further, we work with the Quillen homotopy category $\tt Ho(M)\,$, which is a strict localization of $\tt M$ at its weak equivalences $W$ with localization functor denoted $\zg_{\tt M}\,,$ and we use the Kan extension derived functor operations $\mathbb{L}^{\op{K}},\mathbb{R}^{\op{K}}$ and the strongly universal derived functor operations $\mathbb{L}^{\op{S}},\mathbb{R}^{\op{S}}$ in the sense of \cite{CompTheo}. We will consider different types of replacement, in particular local fibrant C-replacements, which means that for every $X\in\tt M$ we choose a fibrant replacement $\tilde{F}X\twoheadrightarrow\ast$ of $X$ such that the map $f_X:X\stackrel{\sim}{\to}\tilde{F}X$ is a cofibration and is identity if $X$ is already fibrant. If $f:X\to Y\,,$ there is a lifting $\tilde{F}f:\tilde{F}X\to \tilde{F}Y,$ which will play an important role:

\begin{equation}\label{FTilde} \begin{tikzpicture}
 \matrix (m) [matrix of math nodes, row sep=3em, column sep=3em]
   {  \stackrel{}{X}  & \stackrel{}{Y} & \tilde{F}Y  \\
      \tilde{F}X & & \stackrel{}{\ast}  \\ };
 \path[->]
 (m-1-1) edge node[auto] {\small{$f$}} (m-1-2)
 (m-1-2) edge [>->] node[auto] {\small{$\;\;{}_{\widetilde{}}\,\;f_Y$}}(m-1-3)
 (m-2-1) edge [->>] (m-2-3)
 (m-1-1) edge [>->] node[auto] {\small{$\;\;{}_{\widetilde{}}\,\;f_X$}} (m-2-1)
 (m-1-3) edge [->>] (m-2-3)
 (m-2-1) edge [->, dashed] node[auto] {\small{$\tilde{F}f$}} (m-1-3);
\end{tikzpicture}
\end{equation}

\section{Models of homotopy pullbacks}\label{ModHoPull}

We start recalling some results of \cite{CompTheo} and \cite{Models}.\medskip

The first theorem addresses the question of stability of a derived functor with respect to a change of definition (Kan extension versus strong universal property) and with respect to a change of the type of fibrant replacement used to compute it (local versus global).

\begin{theo}[\cite{CompTheo}]\label{Fundamental0}
If $G\in\tt Fun(M,N)$ is a functor between model categories that sends weak equivalences between fibrant objects to weak equivalences, its Kan extension right derived functor $$\mathbb{R}^{\op{K}}G\in\tt Fun(Ho(M),Ho(N))$$ and its strongly universal right derived functor $$\mathbb{R}^{\op{S}}G\in\tt Fun(Ho(M),Ho(N))$$ exist and we have \be\label{Fundamental1}\mathbb{R}^{\op{K}}G\doteq\op{Ho}(\zg_{\tt N}\circ G\circ\tilde{F})\doteq\mathbb{R}_R^{\op{S}}G:=\op{Ho}(\zg_{\tt N}\circ G\circ R)\stackrel{\cong}{\Rightarrow} \mathbb{R}^{\op{S}}G\;,\ee where $\tilde{F}$ is a local fibrant C-replacement, $R$ is a fibrant C-replacement functor and $\op{Ho}$ the unique on the nose factorization through $\tt Ho(M)\,$. This implies that \be\label{Fundamental2}\mathbb{R}^{\op{K}}G\circ\zg_{\tt M}\doteq\zg_{\tt N}\circ G\circ\tilde{F}\doteq\mathbb{R}_R^{\op{S}}G\circ\zg_{\tt M}=\zg_{\tt N}\circ G\circ R\stackrel{\cong}{\Rightarrow}\mathbb{R}^{\op{S}}G\circ\zg_{\tt M}\;,\ee where $\doteq$ denotes a canonical natural isomorphism and $\stackrel{\cong}{\Rightarrow}$ a not necessarily canonical natural isomorphism.
\end{theo}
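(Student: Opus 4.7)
The plan is to first check that the candidate composites descend to $\tt Ho(M)$, then produce a canonical natural isomorphism between the two descended functors, and finally verify that the common value satisfies both the Kan extension and the strongly universal property. To start, I would show that $\zg_{\tt N}\circ G\circ\tilde F$ and $\zg_{\tt N}\circ G\circ R$ send weak equivalences of $\tt M$ to isomorphisms of $\tt Ho(N)$: given a weak equivalence $f:X\to Y$, the lifting $\tilde Ff$ in diagram (\ref{FTilde}) fits into a square in which $f_X$, $f_Y$ are trivial cofibrations and $\tilde FX$, $\tilde FY$ are fibrant, so 2-out-of-3 forces $\tilde Ff$ to be a weak equivalence between fibrant objects; the hypothesis on $G$ then makes $G(\tilde Ff)$ a weak equivalence in $\tt N$, which becomes an isomorphism after $\zg_{\tt N}$. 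The same argument applies to $Rf$. The strict localization property of $\zg_{\tt M}$ now yields unique on-the-nose factorizations $\op{Ho}(\zg_{\tt N}\circ G\circ\tilde F)$ and $\mathbb{R}_R^{\op{S}}G=\op{Ho}(\zg_{\tt N}\circ G\circ R)$.

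Next I would identify the two factorizations by a canonical natural isomorphism built from the lifting axiom: for each $X$ one obtains a weak equivalence $\tilde FX\to RX$ between fibrant objects (by lifting the trivial cofibration $f_X$ against $RX\twoheadrightarrow\ast$), and two such choices are left-homotopic, so their images in $\tt Ho(N)$ after applying $G$ coincide. Naturality in $X\in\tt Ho(M)$ follows by a further lifting argument, yielding $\op{Ho}(\zg_{\tt N}\circ G\circ\tilde F)\doteq\mathbb{R}_R^{\op{S}}G$.

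For the Kan extension formulation, the canonical natural transformation $\zg_{\tt N}\circ G\Rightarrow\op{Ho}(\zg_{\tt N}\circ G\circ\tilde F)\circ\zg_{\tt M}$ is induced objectwise by inverting the $\tt Ho(N)$--isomorphism $\zg_{\tt N}G(f_X)$; right Kan extension universality is checked by transferring any competing pair $(H,\alpha:\zg_{\tt N}\circ G\Rightarrow H\circ\zg_{\tt M})$ along $\tilde F$ and using that $\zg_{\tt M}(f_X)$ is invertible to pin down the comparison natural transformation. The not-necessarily-canonical isomorphism $\mathbb{R}_R^{\op{S}}G\stackrel{\cong}{\Rightarrow}\mathbb{R}^{\op{S}}G$ is then obtained by applying the strongly universal property of \cite{CompTheo} to $(\mathbb{R}_R^{\op{S}}G,\eta)$, where $\eta$ is assembled from the $f_X$'s exactly as in the Kan extension case.

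The main obstacle is carefully distinguishing the on-the-nose canonical identifications (which follow from the strict localization property together with uniqueness of liftings up to homotopy) from the merely-existing isomorphism produced by the strongly universal property of \cite{CompTheo}. One must also be disciplined about invoking the hypothesis on $G$ only on weak equivalences already forced into the fibrant subcategory via $\tilde F$ or $R$, since $G$ need not preserve general weak equivalences; this is precisely why the replacements are threaded through every step and why the equality $\mathbb{R}_R^{\op{S}}G\circ\zg_{\tt M}=\zg_{\tt N}\circ G\circ R$ in (\ref{Fundamental2}) is strict rather than up to isomorphism.
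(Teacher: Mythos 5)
This theorem is recalled in the paper from the companion work \cite{CompTheo} and no proof is given here, so there is no internal argument for you to match against; nevertheless your outline contains a genuine error that must be fixed.

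The central mistake is in the Kan extension step, where you claim that the comparison natural transformation $\zg_{\tt N}\circ G\Rightarrow\op{Ho}(\zg_{\tt N}\circ G\circ\tilde F)\circ\zg_{\tt M}$ is ``induced objectwise by inverting the $\tt Ho(N)$--isomorphism $\zg_{\tt N}G(f_X)$.'' This is false for two reasons. First, $f_X\colon X\stackrel{\sim}{\to}\tilde FX$ is a weak equivalence whose source $X$ need not be fibrant, so the hypothesis on $G$ does not apply and $G(f_X)$ need not be a weak equivalence in $\tt N$; consequently $\zg_{\tt N}G(f_X)$ is typically \emph{not} an isomorphism in $\tt Ho(N)$. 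This is exactly the pitfall you flag in your closing paragraph, but you then violate it at this step. Second, the direction is wrong: the component at $X$ must be a map $\zg_{\tt N}GX\to\zg_{\tt N}G\tilde FX$, and $\zg_{\tt N}G(f_X)$ already points that way, so there is nothing to invert and no reason to want to. The correct comparison $\eta$ is simply $\eta_X=\zg_{\tt N}G(f_X)$ (likewise $\zg_{\tt N}G(r_X)$ for the functorial replacement), with no claim of invertibility, and one verifies the Kan extension universal property using only that $\zg_{\tt M}(f_X)$ is an isomorphism in $\tt Ho(M)$.

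Two further gaps should be filled. You never check that $\zg_{\tt N}\circ G\circ\tilde F$ is actually a functor: a local replacement $\tilde F$ is a choice of objects and liftings, not a functor, so $\tilde F(g\circ f)$ and $\tilde Fg\circ\tilde Ff$ may differ. One must argue that any two liftings in diagram \eqref{FTilde} are right homotopic via a path object on the fibrant target, that $G$ sends the path object projections (weak equivalences between fibrant objects) to weak equivalences, and hence that $\zg_{\tt N}\circ G$ identifies the two liftings; only then is $\zg_{\tt N}\circ G\circ\tilde F$ a genuine functor to which $\op{Ho}$ applies on the nose. The same homotopy-independence argument is what justifies your claim that the $\tilde F\leftrightarrow R$ comparison is well-defined, and you should make it explicit rather than appeal to ``left-homotopic'' (the relevant homotopy here is a right homotopy, since it is $G$'s behaviour on fibrant objects that is controlled). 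Finally, the last sentence merely \emph{invokes} the strongly universal property to get $\mathbb{R}^{\op{S}}G$, but the theorem asserts that $\mathbb{R}^{\op{S}}G$ exists; that existence is the nontrivial content of \cite{CompTheo} and cannot be conjured by applying the definition to $(\mathbb{R}_R^{\op{S}}G,\eta)$ without verifying that this pair actually satisfies the strengthened 2-categorical universal property.
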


The next corollary emphasizes that a derived or homotopy limit with respect to a suitable model structure $\zs$ on the diagram category under consideration belongs to a well-defined isomorphism class of the target homotopy category, regardless of the definition of a derived functor and the fibrant replacement we use. Considered as an object of the target {\it model} category, a homotopy limit is thus only well-defined up to a zigzag of weak equivalences. This indeterminacy is further increased by the ambiguity resulting from various choices for $\zs\,$.

\begin{cor}\label{IndeterminacyHoLimTheo}
Let $\,\tt S$ be a small category, let $\tt M$ be a model category and let $\zs$ be a model structure on the category $\tt Fun(S,M)$ of $\,\tt S$--shaped diagrams of $\,\tt M$ such that $\op{Lim}:\tt Fun(S,M)\to M$ is a right Quillen functor. If $X\in\tt Fun(S,M)$ its homotopy limit with respect to $\zs$ is given as an object of $\tt M$ by \be\label{IndeterminacyHoLim}\mathbb{R}_{\zs}\!\op{Lim}(X)\zig\op{Lim}(R_{\zs}X)\stackrel{\sim}{\rightleftarrows}\op{Lim}(\tilde{F}_{\zs}X)
\stackrel{\sim}{\to}\op{Lim}(F_{\zs}X)\;,\ee where $\mathbb{R}_{\zs}\!\op{Lim}(X)$ can be interpreted as Kan extension or strongly universal derived functor, where $\approx$ denotes a zigzag of weak equivalences and where $R_{\zs},\tilde{F}_{\zs},F_{\zs}$ are a fibrant C-replacement functor, a local fibrant C-replacement and any fibrant replacement in the model structure $\zs\,,$ respectively. The weak equivalence $\stackrel{\sim}{\to}$ between the last two representatives is the universal morphism \be\label{UniMorHoLim}\op{Lim}(\ell_\zs):\op{Lim}(\tilde{F}_\zs X)\stackrel{\sim}{\rightarrow}\op{Lim}(F_\zs X)\ee that is induced by a lifting
\begin{equation}\label{FibRep}
\begin{tikzpicture}
 \matrix (m) [matrix of math nodes, row sep=3em, column sep=3em]
   {X & & F_\zs X   \\
    \tilde{F}_\zs X & & \stackrel{}{\ast} \\};
 \path[->]
 (m-1-1) edge [->] node[above]{\small{$\sim$}} node[below]{\small{$f_X$}} (m-1-3)
 (m-1-1) edge [>->] node[left]{\small{$\sim$}} node[right]{\small{$\tilde{f}_X$}} (m-2-1)
 (m-2-1) edge [->>] (m-2-3)
 (m-1-3) edge [->>](m-2-3)
 (m-2-1) edge [->, dashed] node[below] {\small{$\ell_\zs$}} (m-1-3);
\end{tikzpicture}
\end{equation}
and its image $\zg_{\tt M}(\op{Lim}(\ell_\zs))$ in homotopy is independent of the lifting considered. A similar remark holds for the weak equivalences $\stackrel{\sim}{\rightleftarrows}\,$.
\end{cor}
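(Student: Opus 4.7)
The plan is to deduce every comparison in \eqref{IndeterminacyHoLim} from Theorem \ref{Fundamental0} together with a single lifting-and-homotopy argument.

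First, since $\op{Lim}$ is right Quillen for the model structure $\zs\,,$ Ken Brown's lemma guarantees that it sends weak equivalences between $\zs$-fibrant objects to weak equivalences in $\tt M\,,$ so Theorem \ref{Fundamental0} applies with $G=\op{Lim}\,.$ Evaluating \eqref{Fundamental2} at $X$ I get
$$\mathbb{R}^{\op{K}}\!\op{Lim}(X)\doteq\zg_{\tt M}(\op{Lim}(\tilde{F}_\zs X))\doteq\zg_{\tt M}(\op{Lim}(R_\zs X))=\mathbb{R}_R^{\op{S}}\!\op{Lim}(X)\stackrel{\cong}{\Rightarrow}\mathbb{R}^{\op{S}}\!\op{Lim}(X)\;.$$
An isomorphism in $\tt Ho(M)$ unpacks, via the Ore/hammock description of the localisation, into a zigzag of weak equivalences in $\tt M\,;$ this supplies the symbol $\zig$ and the zigzag $\stackrel{\sim}{\rightleftarrows}$ appearing in \eqref{IndeterminacyHoLim}, as well as the independence of the representative modulo a zigzag of weak equivalences.

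Next, I construct the third representative. By definition of a local fibrant C-replacement, $\tilde{f}_X:X\stackrel{\sim}{\rightarrowtail}\tilde{F}_\zs X$ is a trivial cofibration, whereas $f_X:X\stackrel{\sim}{\to} F_\zs X$ is a weak equivalence into the $\zs$-fibrant object $F_\zs X\,.$ Applying the lifting axiom to the square \eqref{FibRep} produces the dashed arrow $\ell_\zs\,,$ and the 2-out-of-3 axiom forces $\ell_\zs$ itself to be a weak equivalence. Since $\tilde{F}_\zs X$ and $F_\zs X$ are both $\zs$-fibrant and $\op{Lim}$ is right Quillen, $\op{Lim}(\ell_\zs)$ is a weak equivalence in $\tt M\,,$ which is precisely the universal morphism \eqref{UniMorHoLim}.

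To prove independence of the lifting, I compare two solutions $\ell_\zs$ and $\ell_\zs'$ of \eqref{FibRep}. They agree after precomposition with the trivial cofibration $\tilde{f}_X\,,$ so a standard path-object lift in $({\tt Fun(S,M)},\zs)$ --- exploiting the $\zs$-fibrancy of $F_\zs X$ --- yields a right homotopy $H$ between them. Because right Quillen functors preserve small products and fibrations, $\op{Lim}$ carries a $\zs$-path object of $F_\zs X$ to a path object of $\op{Lim}(F_\zs X)$ in $\tt M\,,$ hence $\op{Lim}(H)$ is a right homotopy between $\op{Lim}(\ell_\zs)$ and $\op{Lim}(\ell_\zs')\,.$ As the two path-object projections of any path object coincide in $\tt Ho(M)$ (they are both inverses of the section), the images $\zg_{\tt M}(\op{Lim}(\ell_\zs))$ and $\zg_{\tt M}(\op{Lim}(\ell_\zs'))$ are equal. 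The step that will require the most care is precisely this last one: one must verify that $\op{Lim}$ sends a $\zs$-path object of $F_\zs X$ to a genuine path object of $\op{Lim}(F_\zs X)$ in $\tt M\,,$ which is exactly where the full strength of the right-Quillen hypothesis on $\op{Lim}$ is used.
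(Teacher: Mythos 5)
Your proposal is correct and follows precisely the expected derivation: it specialises Theorem~\ref{Fundamental0} to $G=\op{Lim}$ via Ken Brown's lemma, upgrades the resulting $\tt Ho(M)$--isomorphisms to zigzags of weak equivalences in $\tt M\,$, constructs $\op{Lim}(\ell_\zs)$ from the lifting square together with the 2-out-of-3 axiom and the preservation of weak equivalences between $\zs$-fibrant objects, and settles independence of the lifting by showing that $\op{Lim}$ carries a $\zs$-path object of $F_\zs X$ to a path object of $\op{Lim}(F_\zs X)$, so that a fibrewise right homotopy between two lifts descends to a right homotopy in $\tt M\,.$ This is essentially the paper's intended argument.
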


In the case ${\tt S}={\tt I}:=\{c\to d\leftarrow b\}$ the functors $X \in \tt Fun(I, M)$ are the cospan diagrams $C \to D \leftarrow B$ of $\,\tt M$ and the limit $\op{Lim}(X)$ is the pullback $ B \times_D C \,$. There are three Reedy model structures $ \zs_i $ ($ i \in \{1,2,3 \} $) on $ \tt Fun(I, M) \,$, for which the pullback is a right Quillen functor. The homotopy limits $ \R_{\zs_i} \! \op{Lim}(X) $ with respect to the $ \zs_i $ are called homotopy pullbacks and are denoted by $ B \times_D^{h_{\zs_i}} C\,$. It can be proven that common representatives exist. We define the full homotopy pullback $ B \times_D^h C $ such that its canonical representatives are exactly the representatives of all three homotopy pullbacks $ B \times_D^{h_{\zs_i}} C $ ($ i \in \{1, 2.3 \} $). This leads to the

\begin{theo}[\cite{Models}]\label{IndHoPull}
The full homotopy pullback of a cospan $C\to D\leftarrow B$ in a model category is independent of the type of derived functor and of the model structure $\zs_i$ $(i\in\{1,2,3\})$ on cospan diagrams considered. We get the canonical representatives of the full homotopy pullback from the standard pullback of the weakly equivalent cospans $C'\to D'\leftarrow B'$ with three fibrant objects and at least one morphism that is a fibration: if in the adjacent commutative squares
\begin{equation}
\begin{tikzpicture}
 \matrix (m) [matrix of math nodes, row sep=3em, column sep=2.5em]
   {C & & D & & B \\
    C' & & D' & & B' \\};
 \path[->]
 (m-1-1) edge [->] (m-1-3)
 (m-1-3) edge [<-] (m-1-5)
 (m-2-1) edge [->] (m-2-3)
 (m-2-3) edge [<-] (m-2-5)
 (m-1-1) edge [->] node[left] {\small{$\sim$}} (m-2-1)
 (m-1-3) edge [->] node[left] {\small{$\sim$}}(m-2-3)
 (m-1-5) edge [->] node[right] {\small{$\sim$}}(m-2-5);
\end{tikzpicture}
\end{equation}
all vertical arrows are weak equivalences, all bottom nods are fibrant objects and at least one of the bottom arrows is a fibration, we have \be\label{HoPull}B\times_D^{h} C\approx B'\times_{D'}C'\;.\ee
\end{theo}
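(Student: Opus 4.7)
The first claim -- independence from the type of derived functor -- is an immediate application of Theorem \ref{Fundamental0} to $G=\op{Lim}$ together with each diagram model structure $\zs_i$, since the right Quillen functor $\op{Lim}$ in particular sends weak equivalences between fibrant diagrams to weak equivalences. The substantive content is the independence from the choice of $\zs_i$ and the concrete formula \eqref{HoPull}, which I plan to address by exhibiting common representatives across the three Reedy structures.

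For this I rely on Corollary \ref{IndeterminacyHoLimTheo}: for each $\zs_i$, the homotopy pullback $B\times_D^{h_{\zs_i}}C$ is represented up to a zigzag of weak equivalences by the standard pullback $\op{Lim}(F_{\zs_i}X)$ of any $\zs_i$-fibrant replacement $F_{\zs_i}X$ of the cospan $X=(C\to D\leftarrow B)$. The three Reedy structures $\zs_1,\zs_2,\zs_3$ differ only in how many of the two cospan arrows are required to be fibrations: $\zs_3$ imposes both, while $\zs_1$ and $\zs_2$ impose one or the other. In particular, every $\zs_3$-fibrant cospan is fibrant in all three structures, so any $\zs_3$-fibrant replacement of $X$ is simultaneously a $\zs_i$-fibrant replacement for every $i$, and its standard pullback already represents all three homotopy pullbacks.

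To obtain formula \eqref{HoPull}, I start from $(C',D',B')$ as in the statement and, without loss of generality, assume $C'\twoheadrightarrow D'$ is the given fibration. I factor the remaining map $B'\to D'$ as a trivial cofibration $B'\stackrel{\sim}{\hookrightarrow}\tilde{B}$ followed by a fibration $\tilde{B}\twoheadrightarrow D'$. The cospan $(C',D',\tilde{B})$ is then $\zs_3$-fibrant and weakly equivalent to $(C,D,B)$, so by the previous paragraph its standard pullback represents every $B\times_D^{h_{\zs_i}}C$. A direct application of the classical cospan gluing lemma to the morphism of cospans $(C',D',B')\to(C',D',\tilde{B})$, whose vertical maps are identities on the first two entries and a trivial cofibration on the third, with all six corners fibrant and the common fibration $C'\twoheadrightarrow D'$ in both rows, produces a weak equivalence $C'\times_{D'}B'\stackrel{\sim}{\to}C'\times_{D'}\tilde{B}$. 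Composing the zigzag, $B'\times_{D'}C'$ is itself a canonical representative of each of the three homotopy pullbacks, which establishes \eqref{HoPull}.

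The main obstacle I anticipate is the bookkeeping around the three Reedy structures together with the gluing lemma. One must verify precisely that each $\zs_i$ makes $\op{Lim}$ right Quillen by a matching-object calculation on $\tt I$, identify which Reedy structure corresponds to each fibration assumption, and check that the gluing lemma holds in the asymmetric form needed; the two mirror cases, in which the fibration in $(C',D',B')$ lies on the left or on the right of the cospan, must be handled in parallel so that the final conclusion does not depend on which of the two arrows is chosen as the given fibration. Once these ingredients are in place, the proof reduces, as above, to a single factorization together with one application of the gluing lemma.
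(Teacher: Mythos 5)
Since the paper states Theorem~\ref{IndHoPull} as a result cited from \cite{Models} and gives no internal proof, there is nothing in this document to compare your argument against line by line; I can only assess it on its own terms, and on those terms it is essentially sound. Your decomposition is the natural one: independence of the type of derived functor reduces immediately to Theorem~\ref{Fundamental0} applied to $\op{Lim}$; independence of the model structure $\zs_i$ reduces to exhibiting a common fibrant replacement, which works because the Reedy structure requiring both cospan arms to be fibrations has fibrant objects that are fibrant in all three structures, so its fibrant replacements (together with Corollary~\ref{IndeterminacyHoLimTheo}, which allows \emph{any} fibrant replacement, not just a cofibration--followed--by--fibrant one) simultaneously compute all three $B\times_D^{h_{\zs_i}}C$; and the passage from the weakly equivalent cospan $C'\to D'\leftarrow B'$ of the theorem (which is only ``one-arm fibrant'') to a fully Reedy-fibrant cospan is handled by factoring the non-fibration arm and invoking the dual gluing lemma.

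Two small things are worth flagging so that the argument is airtight. First, the ``classical cospan gluing lemma'' in a general (not right proper) model category needs all six objects fibrant and the \emph{same} arm to be a fibration in both rows; your setup (identities on $C'$ and $D'$, trivial cofibration $B'\rightarrowtail\tilde B$ into a fibrant $\tilde B$, with $C'\twoheadrightarrow D'$ the common fibration) satisfies this, but the cleanest justification inside the paper's framework is to observe that both cospans are fibrant in the Reedy structure $\zs_i$ for which the $C'\to D'$ arm is the fibration arm, and that $\op{Lim}$, being right Quillen there, preserves the objectwise weak equivalence between them by Brown's lemma — this is the same mechanism as the $\stackrel{\sim}{\to}$ in \eqref{IndeterminacyHoLim}, not a separate external fact. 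Second, you should make explicit that the composite $(C,D,B)\to(C',D',B')\to(C',D',\tilde B)$ is an objectwise weak equivalence, so that $(C',D',\tilde B)$ really is a fibrant replacement of the original cospan $X$; that is what licenses reading $\op{Lim}(C',D',\tilde B)$ as a representative of every $B\times_D^{h_{\zs_i}}C$ via Corollary~\ref{IndeterminacyHoLimTheo}. With those two clarifications, the proposal is a complete and reasonable proof of the statement.
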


We increase the flexibility of homotopy limits, homotopy pullbacks and full homotopy pullbacks by allowing generalized representatives. In the case of full homotopy pullbacks, we have the

\begin{theo}\label{IndHoPullMod}
The vertex $A$ of the span of a commutative square
\begin{equation}\label{ComSqu}
\begin{tikzpicture}
 \matrix (m) [matrix of math nodes, row sep=2.5em, column sep=1.5em]
   {A && B\\
    C && D\\};
 \path[->]
 (m-1-1) edge [->] (m-1-3)
 (m-2-1) edge [->] (m-2-3)
 (m-1-1) edge [->] (m-2-1)
 (m-1-3) edge [->] (m-2-3);
\end{tikzpicture}
\end{equation} in a model category is a model or \emph{generalized representative} of the full homotopy pullback $B\times_D^{h}C$ if the universal morphism from $A$ to a \emph{canonical representative} of $B\times_D^{h}C$ is a weak equivalence. In other words, there must exist a cospan $C'\to D'\leftarrow B'$ to which $C\to D\leftarrow B$ is weakly equivalent, whose three nodes are fibrant objects and at least one of whose morphisms is a fibration, such that the universal morphism $A\to B'\times_{D'}C'$ is a weak equivalence.
\end{theo}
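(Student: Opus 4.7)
The plan is to deduce this from Theorem \ref{IndHoPull}, the universal property of the ordinary pullback, and the 2-out-of-3 axiom. The first sentence of the statement is a definition, so what actually needs proving is the equivalence of that definition with the ``in other words'' reformulation: the existence of one replacement cospan $C'\to D'\leftarrow B'$ with the stated fibrancy/fibration/weak-equivalence properties for which $A\to B'\times_{D'}C'$ is a weak equivalence is equivalent to the analogous condition holding for every canonical representative of $B\times_D^{h}C$.

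First I would fix such a replacement cospan and construct the universal morphism $u: A\to B'\times_{D'}C'$. Combining the commutative square \eqref{ComSqu} with the replacement weak equivalences $B\to B'$, $C\to C'$, $D\to D'$ gives a cone on $C'\to D'\leftarrow B'$: the two composites $A\to B\to B'\to D'$ and $A\to C\to C'\to D'$ both equal $A\to B\to D\to D' = A\to C\to D\to D'$, by commutativity of the original square and of the two replacement squares. The universal property of the strict pullback $B'\times_{D'}C'$ then produces $u$ uniquely.

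Second, I would show that the property ``$u$ is a weak equivalence'' is independent of the chosen replacement. Given two such replacements $(C',D',B')$ and $(C'',D'',B'')$, the pullbacks $P'=B'\times_{D'}C'$ and $P''=B''\times_{D''}C''$ are both canonical representatives of $B\times_D^{h}C$ by Theorem \ref{IndHoPull} and are therefore connected by a zigzag of weak equivalences. The uniqueness clause in the universal property of the pullback at each stage of the zigzag forces the triangles at the apex $A$ to commute, so iterated applications of 2-out-of-3 transfer the weak-equivalence property of $u'$ to $u''$ and back.

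The main obstacle is making the comparison between $u'$ and $u''$ genuinely compatible along the zigzag: the two replacement cospans need not sit over a common one. I would overcome this by simultaneously fibrantly replacing $(C',D',B')$ and $(C'',D'',B'')$ inside a single replacement cospan $(C''',D''',B''')$ that receives weak equivalences from each, so that both $P'$ and $P''$ map to (or receive a map from) $B'''\times_{D'''}C'''$ compatibly with the morphisms from $A$. The argument then reduces to applying 2-out-of-3 along the resulting span $P'\to B'''\times_{D'''}C'''\leftarrow P''$, at which point the equivalence claimed in the theorem is immediate.
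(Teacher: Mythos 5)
The paper does not actually prove this statement: the theorem is a definition of ``generalized representative'' and the needed claim --- that the condition is independent of the chosen replacement cospan --- appears as an unproved remark just after the theorem, deferred to \cite{Models}. So I will simply assess your attempt on its own merits.

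Your overall strategy --- build the universal morphism $u$ from the commutative square, then show ``$u$ is a weak equivalence'' is independent of the replacement by passing to a common refinement and applying 2-out-of-3 --- is the right one, and your step 1 is correct. The gap is in the last sentence: after producing the span $P'\to B'''\times_{D'''}C'''\leftarrow P''$ you declare the conclusion ``immediate,'' but the real content is showing that the legs of that span are weak equivalences, and this does not follow merely from $(C',D',B')\to(C''',D''',B''')$ being an objectwise weak equivalence of cospans. You need $\op{Lim}$ applied to a weak equivalence to give a weak equivalence, which requires (i) that both cospans be fibrant in a common Reedy model structure $\sigma_j$ on $\tt Fun(I,M)$ for which $\op{Lim}$ is right Quillen, and (ii) Ken Brown's lemma. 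Since the two given replacements may carry their fibration on different legs (hence be fibrant for different $\sigma_j$), the common refinement must be fibrant for all three structures, i.e.\ have both legs fibrations over a fibrant $D'''$. Moreover, the maps $(C,D,B)\to(C',D',B')$ are not assumed to be cofibrations, so you cannot directly lift to construct $(C',D',B')\to(C''',D''',B''')$; you must first factor each replacement map (in the matching $\sigma_j$) as a trivial cofibration followed by a trivial fibration, then lift the trivial cofibration against the fibrant refinement. This replaces your single span by a longer zigzag of objectwise weak equivalences of cospans, each step being between $\sigma_j$-fibrant diagrams for a common $j$; only then do Ken Brown and 2-out-of-3 close the argument. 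As written, the ``simultaneously fibrantly replacing\ldots inside a single replacement cospan'' and the unexplained direct map $P'\to P'''$ paper over exactly these points.

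Two smaller remarks: the replacements sit \emph{under} $(C,D,B)$, not over it, so the obstruction you identify should read ``under a common one''; and your hedge ``(or receive a map from)'' flags the same uncertainty about the direction of the comparison map that the missing factor-and-lift step is needed to resolve.
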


If the condition of Theorem \ref{IndHoPullMod} is satisfied for one replacement its is satisfied for all replacements.\medskip

In right proper model categories, we can weaken the condition:

\begin{theo}[\cite{Models}]\label{RightProper}
The vertex $A$ of the span of a commutative square \eqref{ComSqu} in a \emph{right proper} model category is a model of the full homotopy pullback $B\times_D^{h}C$ if there exists a cospan $C'\to D'\leftarrow B'$ to which $C\to D\leftarrow B$ is weakly equivalent and at least one of whose morphisms is a fibration, such that the universal morphism $A\to B'\times_{D'}C'$ is a weak equivalence.
\end{theo}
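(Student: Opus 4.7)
My plan is to reduce Theorem \ref{RightProper} to Theorem \ref{IndHoPullMod} by upgrading the given cospan $C'\to D'\leftarrow B'$ — for which only one morphism is a fibration and no fibrancy is assumed on the nodes — to a weakly equivalent cospan satisfying the stronger hypotheses of Theorem \ref{IndHoPullMod} (all three nodes fibrant, at least one morphism a fibration), while tracking how the standard pullback changes up to weak equivalence. Right properness will play the precise role of controlling that change.

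Concretely, assume after relabelling that $B'\to D'$ is the fibration. First, I would factor $C'\to D'$ as a trivial cofibration $C'\stackrel{\sim}{\to}\bar C$ followed by a fibration $\bar C\to D'$. Next I would choose a fibrant replacement $D'\stackrel{\sim}{\to}\tilde D$ and factor the composites $B'\to D'\to\tilde D$ and $\bar C\to D'\to\tilde D$ as trivial cofibrations followed by fibrations, $B'\stackrel{\sim}{\to}\tilde B\to\tilde D$ and $\bar C\stackrel{\sim}{\to}\tilde C\to\tilde D$. Since $\tilde D$ is fibrant and both new maps are fibrations, $\tilde B$ and $\tilde C$ are fibrant; Theorem \ref{IndHoPullMod} then guarantees that the standard pullback $\tilde B\times_{\tilde D}\tilde C$ is a canonical representative of $B\times_D^h C$.

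The crux is to show the natural comparison $v:B'\times_{D'}C'\to \tilde B\times_{\tilde D}\tilde C$ is a weak equivalence. I would decompose $v$ as $B'\times_{D'}C'\to B'\times_{D'}\bar C\to \tilde B\times_{\tilde D}\tilde C$. The first arrow rewrites as the pullback of the weak equivalence $C'\stackrel{\sim}{\to}\bar C$ along the fibration $B'\times_{D'}\bar C\to \bar C$ (itself the pullback of $B'\to D'$), hence a weak equivalence by right properness. The second is an instance of the cube lemma in a right proper model category, whose two cospans have all four horizontal arrows fibrations and whose connecting vertical arrows are all weak equivalences; this is in turn proved by further factoring through auxiliary pullbacks such as $\tilde B\times_{\tilde D}D'$ and $\tilde C\times_{\tilde D}D'$ and reducing each comparison once more to pulling back a weak equivalence along a fibration.

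Putting the pieces together, the universal morphism $A\to\tilde B\times_{\tilde D}\tilde C$ factors as $A\to B'\times_{D'}C'\stackrel{v}{\to}\tilde B\times_{\tilde D}\tilde C$, a composite of weak equivalences by the hypothesis and by the crux, and is therefore itself a weak equivalence by the 2-out-of-3 axiom; Theorem \ref{IndHoPullMod} then shows that $A$ is a generalized representative of $B\times_D^h C$. The main obstacle is the cube lemma used in the crux — specifically, the combinatorial bookkeeping of intermediate pullbacks so that each application of right properness is valid; this is also where the relaxation from the fibrant-node cospan of Theorem \ref{IndHoPullMod} to the merely fibration-bearing cospan of Theorem \ref{RightProper} is absorbed by the right properness hypothesis.
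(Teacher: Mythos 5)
Your proof is correct. Note, however, that the paper does not itself prove Theorem~\ref{RightProper}: it is stated with a citation to~\cite{Models} and no argument is given in this text, so there is no proof here to compare against.

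On the merits: your reduction to Theorem~\ref{IndHoPullMod} is the natural route, and the decomposition of the comparison map $v$ works exactly as you sketch. The first factor $B'\times_{D'}C'\to B'\times_{D'}\bar C$ is literally a base change of the weak equivalence $C'\stackrel{\sim}{\to}\bar C$ along the fibration $B'\times_{D'}\bar C\twoheadrightarrow\bar C$ (a pullback of $B'\twoheadrightarrow D'$), so right properness applies directly. For the ``cube lemma'' step $B'\times_{D'}\bar C\to\tilde B\times_{\tilde D}\tilde C$, your choice of auxiliary pullbacks $\tilde B\times_{\tilde D}D'$ and $\tilde C\times_{\tilde D}D'$ is precisely what is needed: $B'\to\tilde B\times_{\tilde D}D'$ and $\bar C\to\tilde C\times_{\tilde D}D'$ are weak equivalences over $D'$ by right properness and 2-out-of-3, and the three resulting comparison arrows $B'\times_{D'}\bar C\to(\tilde B\times_{\tilde D}D')\times_{D'}\bar C\to(\tilde B\times_{\tilde D}D')\times_{D'}(\tilde C\times_{\tilde D}D')\to\tilde B\times_{\tilde D}\tilde C$ are each a pullback of a weak equivalence along a fibration (for the last one, $\tilde B\times_{\tilde D}\tilde C\twoheadrightarrow\tilde D$ is a composite of fibrations). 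So every step does indeed reduce to the right-properness axiom, and 2-out-of-3 concludes. The only caution I would add is to keep explicit track of the pasting-law identifications such as $B'\times_{D'}\bar C\cong B'\times_{\tilde B\times_{\tilde D}D'}\bigl((\tilde B\times_{\tilde D}D')\times_{D'}\bar C\bigr)$, since these are what allow each arrow to be recognized as an honest base change; once written out, the argument closes with no gaps.
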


Again, if the condition of Theorem \ref{RightProper} is satisfied for one replacement with one fibration it is satisfied for all replacements of this type.\medskip

The following corollary is stated without proof in \cite{JL}:

\begin{cor}[\cite{Models}]\label{CorLur}
In a model category the standard pullback $B\times_DC$ of a cospan $C\stackrel{g}{\to} D\stackrel{f}{\leftarrow} B$ is a model of the cospan's full homotopy pullback if at least one of the morphisms $f$ or $g$ is a fibration and either all three objects $B,C,D$ are fibrant or the model category is right proper.
\end{cor}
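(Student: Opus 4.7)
The strategy is to reduce the corollary directly to Theorem \ref{IndHoPullMod} (for the fibrant case) and to Theorem \ref{RightProper} (for the right proper case), using the given cospan itself as the comparison cospan. The content of the corollary is essentially the observation that when a cospan already satisfies the fibrancy/fibration conditions required by these earlier theorems, no genuine replacement is needed.

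First I would treat the case in which all three objects $B, C, D$ are fibrant and at least one of $f$ or $g$ is a fibration. Here the cospan $C \xrightarrow{g} D \xleftarrow{f} B$ itself fulfils the hypotheses of Theorem \ref{IndHoPull}, so that $B \times_D C$ is already a \emph{canonical} representative of the full homotopy pullback $B \times_D^h C$. To express this in the language of Theorem \ref{IndHoPullMod}, I would apply that theorem with the comparison cospan $C' \to D' \leftarrow B'$ chosen to be the original cospan $C \to D \leftarrow B$ and with all three vertical weak equivalences taken to be identities. The universal morphism $A \to B' \times_{D'} C'$ from $A := B \times_D C$ to the standard pullback of the comparison cospan is then the identity of $A$, which is trivially a weak equivalence.

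Second I would treat the case in which the model category is right proper and at least one of $f, g$ is a fibration, with no fibrancy hypothesis on $B, C, D$. The argument is the same, but now I invoke Theorem \ref{RightProper}, which drops the requirement that the nodes of the comparison cospan be fibrant. Again I take the comparison cospan to be $C \to D \leftarrow B$ with identity weak equivalences; by assumption one of its morphisms is a fibration, and the universal morphism $A \to B \times_D C = A$ is the identity.

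The proof is essentially an application of the preceding results, so I do not anticipate a genuine obstacle; the only point that requires any verification is that using identity weak equivalences is permitted in the statements of Theorems \ref{IndHoPullMod} and \ref{RightProper}, which is immediate from the model category axioms.
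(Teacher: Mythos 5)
Your proof is correct. Since the paper itself does not reproduce a proof of Corollary~\ref{CorLur} but defers to \cite{Models}, there is no in-text argument to compare against; but the route you take---instantiating the comparison cospan $C' \to D' \leftarrow B'$ of Theorem~\ref{IndHoPullMod} (resp.\ Theorem~\ref{RightProper}) with the cospan $C \to D \leftarrow B$ itself, identity weak equivalences, and observing that the universal morphism from the standard pullback to itself is the identity---is exactly the intended one and is the only natural reduction given how those two theorems are phrased. Your closing remark that identities are permissible weak equivalences is the correct and sufficient sanity check; there is nothing more to verify.
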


Further, the concept of model of a homotopy pullback captures the notion of homotopy fiber square defined in \cite{Hir} and puts it in the right context.

\begin{cor}[\cite{Models}]\label{HoFibSqCor}
In a right proper model category with a fixed functorial trivial cofibration - fibration factorization system, a commutative square \eqref{ComSqu} is a model square, i.e., its vertex $A$ is a model of the homotopy pullback $B\times_D^hC$ if and only if it is a homotopy fiber square in the sense of $\cite{Hir}$.
\end{cor}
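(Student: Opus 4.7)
The plan is to read off both implications directly from Theorem \ref{RightProper}, once Hirschhorn's definition of a homotopy fiber square has been matched with the cospan replacement scheme that the theorem permits. Recall that, using the fixed functorial trivial cofibration - fibration factorization, Hirschhorn declares the square \eqref{ComSqu} a homotopy fiber square when, after factoring (say) $B\to D$ functorially as $B\stackrel{\sim}{\hookrightarrow}B'\twoheadrightarrow D$, the universal morphism $A\to B'\times_D C$ induced by commutativity of the square is a weak equivalence. The replaced cospan $C\to D\leftarrow B'$ is weakly equivalent to $C\to D\leftarrow B$ via the identities on $C$ and $D$ and the trivial cofibration $B\stackrel{\sim}{\hookrightarrow}B'$, and contains the fibration $B'\twoheadrightarrow D$, so it is precisely a cospan replacement of the type considered in Theorem \ref{RightProper}.

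For the ``homotopy fiber square implies model square'' direction, the two hypotheses match on the nose: existence of a weakly equivalent fibration-containing cospan whose standard pullback receives a weak equivalence from $A$ is simultaneously the defining condition of Hirschhorn's homotopy fiber square and the hypothesis of Theorem \ref{RightProper}. Consequently $A$ is a model of the full homotopy pullback $B\times_D^{h}C$.

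For the converse, suppose $A$ is a model of $B\times_D^{h}C$. By Theorem \ref{RightProper} there exists some weakly equivalent replacement cospan containing at least one fibration for which the universal map from $A$ to the corresponding standard pullback is a weak equivalence. The invariance remark immediately following Theorem \ref{RightProper} guarantees that this property is independent of the particular replacement of this type; in particular it holds for the specific replacement produced by Hirschhorn's fixed functorial factorization of $B\to D$, which is by definition the condition for the square to be a homotopy fiber square in his sense.

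The main obstacle is neither implication individually -- both reduce to unwinding definitions once the invariance statement is available -- but rather the content of that invariance statement itself: one must verify that the condition ``the universal map from $A$ to the standard pullback of a weakly equivalent, fibration-containing replacement is a weak equivalence'' is genuinely independent of the chosen replacement. Right properness is exactly what is needed here, since it provides homotopy invariance of the standard pullback of a cospan along a leg that is a fibration; without it one would be forced back to the three-fibrant-objects hypothesis of Theorem \ref{IndHoPullMod}, which would not match Hirschhorn's much lighter functorial-factorization definition.
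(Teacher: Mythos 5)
Your proof is correct in substance and takes the natural approach: both directions reduce to matching Hirschhorn's replacement with the cospan replacements allowed by Theorem~\ref{RightProper}, and the invariance remark following that theorem does the real work. Two small imprecisions are worth flagging, though neither breaks the argument.

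First, Hirschhorn's Definition~13.3.12 factors \emph{both} legs $B\to D$ and $C\to D$ into trivial cofibrations followed by fibrations and then tests whether $A\to\widehat{B}\times_D\widehat{C}$ is a weak equivalence; you only factor one leg. Since both the one-leg replacement $C\to D\leftarrow B'$ and the two-leg replacement $\widehat{C}\to D\leftarrow\widehat{B}$ are weakly equivalent to the original cospan and each contains at least one fibration, the invariance remark after Theorem~\ref{RightProper} guarantees the two tests agree, so your argument survives unchanged; but you should be aware the definition you quote is not literally Hirschhorn's.

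Second, in the converse direction you write ``By Theorem~\ref{RightProper} there exists some \dots replacement \dots for which the universal map from $A$ \dots is a weak equivalence.'' Theorem~\ref{RightProper} is stated as a \emph{sufficient} condition for $A$ to be a model, not as an existence statement; the inference you want comes from Theorem~\ref{IndHoPullMod}, which \emph{defines} ``model'' by the existence of a canonical replacement (three fibrant nodes, one fibration) whose pullback receives a weak equivalence from $A$. That canonical replacement is in particular a replacement of the type admitted in Theorem~\ref{RightProper}, after which the invariance remark takes over exactly as you say. Swapping the citation fixes the gap and the rest of the argument, including your closing observation that right properness is precisely what lets one drop the three-fibrant-objects hypothesis down to Hirschhorn's functorial factorization, is accurate.
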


There is a pasting law for model squares.

\begin{prop}[\cite{Models}]\label{PasLaw}
Let
\begin{equation}\label{ComDia}
\begin{tikzpicture}
 \matrix (m) [matrix of math nodes, row sep=3em, column sep=2.5em]
   {A & & B & & C \\
    D & & E & & F \\};
 \path[->]
 (m-1-1) edge [->] (m-1-3)
 (m-1-3) edge [->] (m-1-5)
 (m-2-1) edge [->] (m-2-3)
 (m-2-3) edge [->] (m-2-5)
 (m-1-1) edge [->] (m-2-1)
 (m-1-3) edge [->] (m-2-3)
 (m-1-5) edge [->] (m-2-5);
\end{tikzpicture}
\end{equation}
be a commutative diagram in a model category. If the right square is a model square, then the left square is a model square if and only if the total square is a model square.
\end{prop}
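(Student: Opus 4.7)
The plan is to reduce the pasting law for model squares to the classical pasting law for ordinary pullbacks by strictifying the whole diagram so that every square of interest becomes simultaneously an honest pullback and a canonical representative of the corresponding full homotopy pullback.

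\textbf{Strictification.} Using functorial factorizations in $\tt M$, pick a fibrant replacement $F\stackrel{\sim}{\to} F'$, then factor $C\to F\to F'$ as $C\stackrel{\sim}{\to} C'\twoheadrightarrow F'$ and $E\to F\to F'$ as $E\stackrel{\sim}{\to} E'\twoheadrightarrow F'$, so that $F',C',E'$ are fibrant and both $C'\twoheadrightarrow F'$ and $E'\twoheadrightarrow F'$ are fibrations. Next, factor $D\to E\to E'$ as $D\stackrel{\sim}{\to} D'\twoheadrightarrow E'$, making $D'$ fibrant and $D'\twoheadrightarrow E'$ a fibration. Form the ordinary pullback $B':=C'\times_{F'} E'$. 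By Corollary \ref{CorLur}, $B'$ is a canonical representative of the full homotopy pullback $E\times_F^h C$, and the universal comparison $B\to B'$ is a weak equivalence if and only if the right square is a model square (Theorem \ref{IndHoPullMod}); in our situation it therefore is.

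\textbf{Classical pasting.} The projection $B'\twoheadrightarrow E'$ is a fibration, being the base change of $C'\twoheadrightarrow F'$, and the composite $D'\twoheadrightarrow E'\twoheadrightarrow F'$ is a fibration. Hence, by Corollary \ref{CorLur}, the ordinary pullbacks
\[ A_1\;:=\; D'\times_{E'} B' \qquad\text{and}\qquad A_2\;:=\; D'\times_{F'} C' \]
are canonical representatives of $D\times_E^h B$ and $D\times_F^h C$ respectively. The classical pasting law for strict pullbacks supplies a canonical isomorphism
\[ A_1\;=\;D'\times_{E'}(C'\times_{F'} E')\;\cong\; D'\times_{F'} C'\;=\;A_2 \]
which identifies the universal comparison map $A\to A_1$ (induced by $A\to D\to D'$ and $A\to B\to B'$) with the universal comparison map $A\to A_2$ (induced by $A\to D\to D'$ and $A\to C\to C'$). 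By Theorem \ref{IndHoPullMod}, $A\to A_1$ is a weak equivalence iff the left square is a model square, and $A\to A_2$ is a weak equivalence iff the total square is; since these two maps coincide under the isomorphism above, the desired equivalence follows.

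The main obstacle is the coherent bookkeeping in the strictification step: the replacements of the right cospan $C\to F\leftarrow E$ must be chosen once and reused, so that the \emph{same} fibrant objects $C',E',F'$ and the \emph{same} fibrations $C'\twoheadrightarrow F'$ and $E'\twoheadrightarrow F'$ feed simultaneously into canonical representatives of the right, left, and total full homotopy pullbacks. Once this is in place, Corollary \ref{CorLur} together with the classical pullback pasting law delivers the result in a single stroke.
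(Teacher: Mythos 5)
The paper states Proposition \ref{PasLaw} as a citation from \cite{Models} and does not reproduce a proof, so there is no in-text argument to compare against. Your proof is correct and is the standard route: build one coherent fibrant replacement of the bottom row, invoke Corollary \ref{CorLur} to get simultaneous canonical representatives $B'=C'\times_{F'}E'$, $A_1=D'\times_{E'}B'$, $A_2=D'\times_{F'}C'$ of the right, left, and total homotopy pullbacks, and then reduce to the strict pasting law. Two small points are worth making explicit when writing this up, both of which you handled implicitly and correctly. First, to apply Corollary \ref{CorLur} to $A_1$ one needs $B'$ fibrant, which follows because $B'\twoheadrightarrow E'$ is a fibration (base change of $C'\twoheadrightarrow F'$) with fibrant target, and one needs the cospan $D'\to E'\leftarrow B'$ to be levelwise weakly equivalent to $D\to E\leftarrow B$ via a commuting ladder; the hypothesis that the right square is a model square is exactly what makes the middle vertical arrow $B\to B'$ a weak equivalence here. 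Second, the identification of $A\to A_1$ with $A\to A_2$ under the strict-pasting isomorphism rests on the equality $B\to B'\to C' = B\to C\to C'$, which is part of the universal property defining $B\to B'$; stating this one-line computation makes the final "coincide under the isomorphism" step airtight.
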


The next result generalizes a property of homotopy fiber squares in a right proper model category with a fixed functorial trivial cofibration - fibration factorization system to model squares in an arbitrary model category.

\begin{prop}[\cite{Models}]\label{ComPara}
Let $ABCD$ and $A'B'C'D'$ be two commutative squares in a model category $\tt M\,$. If there exist four $\tt M$--morphisms from the vertices of the first square to the corresponding vertex of the second such that the four resulting squares commute and if these $\tt M$--morphisms are weak equivalences, then the first square is a model square if and only if the second is.
\end{prop}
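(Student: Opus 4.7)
The plan is to use Theorem \ref{IndHoPullMod} to rephrase each of the two model-square conditions in terms of a universal morphism into one and the \emph{same} canonical representative of the full homotopy pullback, and then invoke the 2-out-of-3 axiom along the weak equivalence $A\to A'$. The cube's four side squares express a weak equivalence of the cospans $C\to D\leftarrow B$ and $C'\to D'\leftarrow B'$, so any fibrant-enough replacement of one doubles as a replacement of the other.

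Concretely, I would begin by replacing the primed cospan $C'\to D'\leftarrow B'$, via the functorial factorization systems used in the proof of Theorem \ref{IndHoPull}, by a weakly equivalent cospan $C''\to D''\leftarrow B''$ with three fibrant objects and at least one morphism a fibration. Composing the resulting weak equivalences $C'\to C''$, $D'\to D''$, $B'\to B''$ with the given weak equivalences $C\to C'$, $D\to D'$, $B\to B'$ produces weak equivalences $C\to C''$, $D\to D''$, $B\to B''$ that fit into commuting squares with the unprimed cospan $C\to D\leftarrow B$ and with $C''\to D''\leftarrow B''$; the commutativity follows from that of the four side squares of the cube together with the replacement squares. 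Consequently $B''\times_{D''}C''$ is simultaneously a canonical representative of the full homotopy pullbacks $B\times_D^h C$ and $B'\times_{D'}^h C'$ in the sense of Theorem \ref{IndHoPull}.

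Denote by $u:A\to B''\times_{D''}C''$ and $u':A'\to B''\times_{D''}C''$ the universal morphisms induced by the cones from $A$ and $A'$. By Theorem \ref{IndHoPullMod}, the square $ABCD$ (resp.\ $A'B'C'D'$) is a model square precisely when $u$ (resp.\ $u'$) is a weak equivalence. A short diagram chase with the four side squares of the cube shows that the cone from $A$ into $C''\to D''\leftarrow B''$ agrees with the composite of $A\to A'$ with the cone from $A'$; uniqueness of the induced map into the pullback then yields the strict equality $u = u'\circ(A\to A')$. Since $A\to A'$ is a weak equivalence by hypothesis, the 2-out-of-3 axiom finishes the proof.

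The only step that requires some attention is this last diagram chase, i.e.\ ensuring that the cone from $A$ coincides on the nose with $A\to A'$ followed by the cone from $A'$; but this is forced by the strict commutativity of all squares in the cube together with the strict commutativity of the replacement squares, so no homotopy-coherence difficulty arises. Everything else reduces to the already-established characterization of model squares in Theorem \ref{IndHoPullMod} and a single application of 2-out-of-3.
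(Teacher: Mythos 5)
Since Proposition~\ref{ComPara} is stated with a citation to~\cite{Models}, this paper gives no proof of its own to compare against; I can only judge your argument on its merits, and it is correct. Reducing both model-square tests to the weak-equivalence of the induced map into a \emph{common} canonical representative $B''\times_{D''}C''$, establishing $u=u'\circ(A\to A')$ by the universal property of the pullback and the commutativity of the cube, and then invoking 2-out-of-3 is exactly the natural argument with the machinery available (compare the proof of Proposition~\ref{weq} in the paper, which runs along the same lines for the special case where the connecting morphisms are already inside the two squares). One point you should state rather than use silently: Theorem~\ref{IndHoPullMod} only asserts that \emph{some} fibrant/fibration replacement of the cospan makes the universal arrow a weak equivalence, so when you claim ``$ABCD$ is a model square precisely when $u$ is a weak equivalence'' you are relying on the remark immediately after that theorem, namely that the condition holds for one replacement iff it holds for all; without that remark the model-square condition for $ABCD$ could a priori be witnessed by a replacement different from your $C''\to D''\leftarrow B''$. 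With that remark made explicit, the proof is complete.
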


\begin{equation}\label{ComCub}
\begin{tikzcd}[back line/.style={densely dotted}, row sep=1em, column sep=1em]
A \ar[dr, "\sim"] \ar{rr} \ar{dd} &  & B \ar{dd} \ar[dr, "\sim"] & \\
& A' \ar{rr} \ar{dd} & & B' \\
C \ar{rr} \ar[dr, "\sim"] & & D \ar[dr, "\sim"]  & \\
& C' \ar{rr} & & D'\ar[crossing over, leftarrow]{uu} \\
\end{tikzcd}
\end{equation}

Later we need

\begin{prop}\label{weq} Any commutative square
$(A,B,C,D)$ in a model category whose vertical (or horizontal) arrows are weak equivalences is a model square.
\end{prop}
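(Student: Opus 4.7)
The plan is to apply Theorem \ref{IndHoPullMod}: it suffices to produce one canonical representative of $B\times_D^hC$ for which the universal morphism from $A$ is a weak equivalence. I treat the case of vertical weak equivalences; the horizontal case is entirely symmetric.

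First I would build a convenient fibrant replacement of the cospan $C\to D\leftarrow B$. Start by fibrantly replacing $D$ as $D\stackrel{\sim}{\hookrightarrow} D'$, then factor the compositions $C\to D\to D'$ and $B\to D\to D'$ as trivial cofibrations followed by fibrations: $C\stackrel{\sim}{\hookrightarrow} C'\twoheadrightarrow D'$ and $B\stackrel{\sim}{\hookrightarrow} B'\twoheadrightarrow D'$. The objects $B'$ and $C'$ are fibrant since they admit fibrations to the fibrant object $D'$, so by Theorem \ref{IndHoPull} the standard pullback $B'\times_{D'}C'$ is a canonical representative of $B\times_D^hC$. The compositions $A\to B\to B'$ and $A\to C\to C'$ become equal after postcomposing with the fibrations to $D'$ (both equal $A\to B\to D\to D'=A\to C\to D\to D'$ by commutativity of the original square and of the replacement squares), so the pullback universal property yields the comparison morphism $u\colon A\to B'\times_{D'}C'$ to analyze.

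The core of the proof is then a two-step application of the 2-out-of-3 axiom. The hypothesis that the right vertical $B\to D$ is a weak equivalence, combined with the commutative square
\[
\begin{array}{ccc} B & \stackrel{\sim}{\to} & B' \\ \downarrow\sim & & \downarrow \\ D & \stackrel{\sim}{\to} & D' \end{array}
\]
whose three other edges are weak equivalences, forces $B'\to D'$ to be a weak equivalence. Since it is also a fibration, it is a trivial fibration, and its pullback $B'\times_{D'}C'\to C'$ is a trivial fibration, in particular a weak equivalence. Composing $u$ with this projection recovers $A\to C\to C'$, which is a weak equivalence by the left vertical hypothesis together with the fibrant replacement of $C$; a second application of 2-out-of-3 then shows that $u$ itself is a weak equivalence, which is what we needed.

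The only mildly delicate point is arranging the replacement so that the leg opposite the weak-equivalence hypothesis becomes a fibration between fibrant objects, which the factorization through $D'$ guarantees. Beyond that, the argument is a routine assembly of Theorem \ref{IndHoPullMod}, Theorem \ref{IndHoPull}, the stability of trivial fibrations under pullback, and 2-out-of-3; there is no substantive obstacle.
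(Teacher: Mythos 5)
Your proof is correct and follows essentially the same route as the paper's: replace the cospan so that all three vertices are fibrant and the leg over the weak equivalence becomes a fibration, note by 2-out-of-3 that this fibration is trivial, pull it back to get a trivial fibration onto $C'$, and apply 2-out-of-3 once more to the composite $A\to B'\times_{D'}C'\to C'$. The paper organizes the replacement slightly differently (it applies a functorial fibrant C-replacement $R$ to the whole square and then factors only the single leg $RB\to RD$), but the mathematical content — stability of trivial fibrations under pullback together with two applications of 2-out-of-3 — is identical.
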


\begin{proof} We apply a fibrant C-replacement functor $R$ to the commutative square $(A,B,C,D)$ and factor the morphism $$R(B\stackrel{\zk}{\to} D)=RB\stackrel{R\zk}{\to}RD=RB\stackrel{\sim}{\to}F(R\zk)\twoheadrightarrow RD$$ into a weak equivalence followed by a fibration. Moreover, we set $P:=F(R\zk)\times_{RD}RC$ and thus get the following commutative diagram:
\begin{equation}\label{AdjComCub}
\begin{tikzcd}[back line/.style={densely dotted}, row sep=1em, column sep=1em]
A \ar[dr, "\sim"] \ar{rrr} \ar[ddd, "\sim"] \ar[ddrr, bend right, dashed]
  & & & B \ar[ddd, "\sim"] \ar[dr, "\sim"] & & \\
& RA \ar{rrr} \ar[ddd, "\sim"] \ar[dr, dashed]
  & & & RB \ar[ddd, "\sim"]\ar[dr, "\sim"] & \\
& & P \ar{rrr} \ar{ddd} & & & F(R\zk)\ar[ddd, twoheadrightarrow, "\sim"] \\
C \ar{rrr} \ar[dr, "\sim"] & & & D \ar[dr, "\sim"] & & \\
& RC \ar{rrr} \ar[dr, equal] & & & RD \ar[dr, equal] \ar[crossing over, leftarrow]{uuu} &\\
& & RC \ar{rrr} & & & RD
\end{tikzcd}
\end{equation}
Since trivial fibrations are closed under pullbacks in any model category, the arrow $P\to RC$ is a trivial fibration, hence a weak equivalence. It follows that $A\to P$ is a weak equivalence, so that $ABCD$ is a model square.
\end{proof}

\begin{prop}\label{QuiPresMod}
Let $G:\tt M\to N$ be a right Quillen functor and let $ABCD$ be a commutative square of $\,\tt M\,$ with \emph{fibrant vertices}. Then, if $ABCD$ is a model square of $\,\tt M$ its image $G(ABCD)$ is a model square of $\,\tt N\,$.
\end{prop}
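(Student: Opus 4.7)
The plan is to produce an explicit canonical representative of the full homotopy pullback of $C\to D\leftarrow B$ on the $\tt M$ side, push it through $G$ using that right adjoints preserve limits, and then verify the hypotheses of Corollary \ref{CorLur} on the $\tt N$ side. The key ingredients are the facts that a right Quillen functor preserves fibrant objects, fibrations, pullbacks (being a right adjoint), and -- by Ken Brown's lemma -- weak equivalences between fibrant objects.

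First, I factor the morphism $B\to D$ as $B\stackrel{\sim}{\to}B'\twoheadrightarrow D$, a trivial cofibration followed by a fibration. Since $D$ is fibrant, so is $B'$, and the cospan $C\to D\twoheadleftarrow B'$ has three fibrant nodes with at least one morphism a fibration. By Corollary \ref{CorLur}, the standard pullback $P:=B'\times_D C$ is therefore a canonical representative of the full homotopy pullback $B\times_D^h C$. Moreover $P$ is fibrant, being the pullback of a fibration with fibrant base. The model-square assumption on $ABCD$ then yields a weak equivalence $\varphi:A\to P$ induced by $A\to B\stackrel{\sim}{\to}B'$ and $A\to C$; both $A$ and $P$ are fibrant.

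Now I apply $G$. It preserves fibrant objects, so $G(A),G(B),G(B'),G(C),G(D),G(P)$ are all fibrant in $\tt N$. It preserves fibrations, so $G(B')\twoheadrightarrow G(D)$ is a fibration. It preserves pullbacks (being a right adjoint), so $G(P)=G(B')\times_{G(D)}G(C)$. Ken Brown's lemma moreover makes $G(B\stackrel{\sim}{\to}B')$ a weak equivalence in $\tt N$, so the cospan $G(C)\to G(D)\leftarrow G(B)$ is weakly equivalent to $G(C)\to G(D)\twoheadleftarrow G(B')$, and Corollary \ref{CorLur} identifies $G(P)$ as a canonical representative of $G(B)\times_{G(D)}^h G(C)$. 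The universal morphism from $G(A)$ to this canonical representative is precisely $G(\varphi)$, which is again a weak equivalence by Ken Brown's lemma applied to $\varphi:A\stackrel{\sim}{\to}P$ between fibrant objects. Hence $G(ABCD)$ is a model square in $\tt N$.

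The main obstacle, modest as it is, is the bookkeeping of fibrancy: each invocation of Ken Brown's lemma (for $B\to B'$ and for $\varphi$) requires both source and target to be fibrant, and this is where the hypothesis that the original square has fibrant vertices is used in an essential way -- in particular to force $B'$ and $P$ to be fibrant so that $\varphi$ lies between fibrant objects and is thus sent to a weak equivalence by $G$.
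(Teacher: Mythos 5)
Your proof is correct and takes essentially the same route as the paper: factor $B\to D$ into a weak equivalence followed by a fibration, form the pullback $P$, observe that all vertices (including $P$) are fibrant and that $A\to P$ is a weak equivalence, then push the whole cube through $G$ using that a right Quillen functor preserves limits, fibrations, fibrant objects, and (via Brown's lemma) weak equivalences between fibrant objects. The only cosmetic differences are that you invoke Corollary~\ref{CorLur} to name $P$ and $G(P)$ as canonical representatives where the paper argues directly from the commutative cube, and the paper spells out the terminal-object argument showing $G$ preserves fibrant objects which you take as known.
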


\begin{proof}
We factor $B\stackrel{\zk}{\to} D=B\stackrel{\sim}{\to}F(\zk)\twoheadrightarrow D$ into a weak equivalence followed by a fibration. Further, we set $P:=F(\zk)\times_D C$ and get the commutative cube
\begin{equation}\label{ComCubF}
\begin{tikzcd}[back line/.style={densely dotted}, row sep=1em, column sep=1em]
A \ar[rr]\ar[dd]\ar[dr, dashed] &&  B \ar[dd]\ar[dr, "\sim"] &\\
& P \ar[rr]\ar[dd] && F(\zk) \ar[dd, twoheadrightarrow] \\
C \ar[rr]\ar[dr, equal] && D \ar[dr, equal] & \\
& C \ar[rr] && D
\end{tikzcd}
\end{equation}
Since $ABCD$ is a model square, the universal arrow $A\dashrightarrow P$ is a weak equivalence. As fibrations are closed under pullbacks, the arrow $P\to C$ is a fibration and all vertices of \eqref{ComCubF} are fibrant objects.\medskip

The image of \eqref{ComCubF} by $G$ is the commutative cube
\begin{equation}\label{ComCubFG}
\begin{tikzcd}[back line/.style={densely dotted}, row sep=1em, column sep=1em]
GA \ar[rr]\ar[dd]\ar[dr] &&  GB \ar[dd]\ar[dr] &\\
& GP \ar[rr]\ar[dd] && G(F(\zk)) \ar[dd] \\
GC \ar[rr]\ar[dr, equal] && GD \ar[dr, equal] & \\
& GC \ar[rr] && GD
\end{tikzcd}
\end{equation}
Since right adjoint functors preserve limits, we have $GP=G(F(\zk))\times_{GD}GC$ and the arrow $GA\to GP$ is the universal arrow from $GA$ to the pullback of the front cospan. As $G$ is a right Quillen functor it preserves weak equivalences between fibrant objects (in view of Brown's lemma), so that the universal arrow $GA\to GP$ and the arrow $GB\to G(F(\zk))$ are weak equivalences ($\star_1$). As $G$ preserves fibrations (by definition of a right Quillen functor), the arrow $G(F(\zk))\to GD$ is a fibration ($\star_2$). Furthermore, the terminal object $\ast_{\tt M}\,$ of $\tt M$ is the limit $\op{Lim}\varnothing_{\tt M}$ of the unique functor $\varnothing_{\tt M}\in\tt Fun(\emptyset,M)$ from the empty category $\emptyset$ to $\tt M\,$. Hence, if $\textsf{F}\in\tt M$ is fibrant, we get \be\label{PresTerm}G(\textsf{F}\twoheadrightarrow\ast_{\tt M})=G(\textsf{F}\twoheadrightarrow\op{Lim}\varnothing_{\tt M})=G\textsf{F}\twoheadrightarrow\op{Lim}G(\varnothing_{\tt M})=G\textsf{F}\twoheadrightarrow\op{Lim}\varnothing_{\tt N}=G\textsf{F}\twoheadrightarrow\ast_{\tt N}\,,\ee so that $G$ preserves fibrant objects and all vertices of \eqref{ComCubFG} are fibrant ($\star_3$). From ($\star_1$), ($\star_2$) and ($\star_3$) it follows that the back square $G(ABCD)$ of \eqref{ComCubFG} is a model square of $\tt N\,.$
\end{proof}

\section{Long homotopy fiber sequences}\label{LoHoFibSeq}

\subsection{Definitions}

Let $\tt M$ be a {\bf pointed model category}, i.e., a model category with a zero object $0$ (a model category whose initial and terminal objects coincide).\medskip

It is natural to refer to the pullback $A:=B\times_D 0$ of an $\tt M$--morphism $B\to D$ over the point $0\to D$ as the fiber of $B\to D$ and to call $A\to B\to D$ a fiber sequence. The following generalization is crucial:

\begin{defi}
A {\bf homotopy fiber sequence} $A\to B\to D$ in a pointed model category $\tt M$ is a model square $ABCD$ in $\tt M$ whose left lower vertex $C$ is acyclic.
\end{defi}

Since an object $C\in\tt M$ is acyclic if the unique morphism $0\to C$ is a weak equivalence (or equivalently if the unique morphism $C\to 0$ is a weak equivalence), a homotopy fiber sequence is a commutative square whose left upper vertex is a model of the homotopy pullback of the square's cospan and whose left lower vertex is weakly equivalent to zero. We stress that, although the left lower vertex $C$ is implicit in the notation $A\to B\to D$ of the homotopy fiber sequence, it is an integral part of it. \medskip

Morphisms of homotopy fiber sequences are therefore defined as morphisms of commutative squares, i.e., as commutative cubes. Explicitly a morphism of homotopy fiber sequences from $A_2\to A_1\to A_0$ with implicit vertex $C_A$ to $B_2\to B_1\to B_0$ with implicit vertex $C_B$ is a quadruplet $$\Phi=(\phi_0,\phi_1,\phi_2,\zvf)$$ of $\tt M$--morphisms $$\phi_i:A_i\to B_i\;(i\in\{0,1,2\})\quad\text{and}\quad \zvf:C_A\to C_B\;,$$ such that the resulting cube commutes. So composition of morphisms of homotopy fiber sequences is induced by the composition of $\tt M\,.$ We denote $\tt h(M)$ the category of homotopy fiber sequences of $\tt M\,$. Further, we denote $$a=(a_1,a_2)$$ a homotopy fiber sequence $$A_2\stackrel{a_2}{\longrightarrow} A_1\stackrel{a_1}{\longrightarrow} A_0\;.$$

\begin{defi}
In a pointed model category a {\bf long homotopy fiber sequence} \be\label{LHFS}a_\bullet=(a_1,a_2,a_3, \cdots)\ee or more explicitly $$(A_\bullet,a_\bullet): \cdots\longrightarrow A_3\stackrel{a_3}{\longrightarrow}A_2\stackrel{a_2}{\longrightarrow} A_1\stackrel{a_1}{\longrightarrow} A_0\;$$ is a sequence of homotopy fiber sequences \be\label{SofSObj}A_{n+1}\stackrel{a_{n+1}}{\longrightarrow}A_{n}\stackrel{a_n}{\longrightarrow}A_{n-1}\quad(n\in\{1,2,3\cdots\})\;.\ee
\end{defi}

It is natural to define a morphism of long homotopy fiber sequences as a sequence of morphisms of homotopy fiber sequences. Explicitly a morphism $\Phi_\bullet$ of long homotopy fiber sequences from $(A_\bullet,a_\bullet)$ to $(B_\bullet,b_\bullet)$ is a sequence \be\label{MLHFS}\Phi_\bullet=(\zf_0,\zf_1,\zf_2,\zf_3,\cdots,\zvf_1,\zvf_2,\cdots)\ee of morphisms \be\label{SofSMor}\Phi_n=(\zf_{n-1},\zf_n,\zf_{n+1},\zvf_n)\quad(n\in\{1,2,3\cdots\})\ee of homotopy fiber sequences from $$A_{n+1}\stackrel{a_{n+1}}{\longrightarrow}A_{n}\stackrel{a_n}{\longrightarrow}A_{n-1}\quad\text{to}\quad B_{n+1}\stackrel{b_{n+1}}{\longrightarrow}B_{n}\stackrel{b_n}{\longrightarrow}B_{n-1}\;.$$ Composition of morphisms of long homotopy fiber sequences is again induced by the composition in $\tt M\,.$ We denote $\tt \ell(M)$ the category of long homotopy fiber sequences of $\tt M\,.$ Moreover, we say that a homotopy fiber sequence (resp., a long homotopy fiber sequence) of $\tt M$ is {\bf objectwise fibrant} if its four vertices are fibrant objects of $\tt M$ (resp., if all homotopy fiber sequences \eqref{SofSObj} are objectwise fibrant). We also say that a morphism of homotopy fiber sequences (resp., of long homotopy fiber sequences) of $\tt M$ is an {\bf objectwise weak equivalence} if its four component morphisms are weak equivalences of $\tt M$ (resp., if all morphisms of homotopy fiber sequences \eqref{SofSMor} are objectwise weak equivalences).\medskip

We close this subsection with the following corollary of Proposition \ref{QuiPresMod}:

\begin{cor}
Let $G:\tt M\to N$ be a right Quillen functor between pointed model categories. The image under $G$ of an objectwise fibrant (long) homotopy fiber sequence of $\,\tt M$ is a (long) homotopy fiber sequence of $\,\tt N\,.$
\end{cor}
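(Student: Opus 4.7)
The plan is to reduce the statement to Proposition \ref{QuiPresMod} applied piecewise, with the only extra ingredient being that a right Quillen functor sends the acyclic corner of each square to an acyclic object in $\tt N\,.$

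First I would handle a single homotopy fiber sequence $A\to B\to D$ with implicit vertex $C$, assuming all four vertices $A,B,C,D$ are fibrant in $\tt M\,.$ By definition $ABCD$ is a model square, so Proposition \ref{QuiPresMod} applies immediately and tells us that $G(ABCD)=(GA,GB,GC,GD)$ is a model square in $\tt N\,.$ It remains to see that $GC$ is acyclic. Since $\tt M$ is pointed, the zero object $0_{\tt M}$ is the terminal object $\ast_{\tt M}\,,$ and the computation \eqref{PresTerm} in the proof of Proposition \ref{QuiPresMod} shows that $G$ preserves terminal objects, so $G(0_{\tt M})=0_{\tt N}\,.$ The object $C$ being acyclic means the morphism $C\to 0_{\tt M}$ is a weak equivalence between fibrant objects (both $C$ and $0_{\tt M}$ are fibrant). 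By Brown's lemma, right Quillen functors preserve weak equivalences between fibrant objects, hence $GC\to G(0_{\tt M})=0_{\tt N}$ is a weak equivalence in $\tt N$, i.e., $GC$ is acyclic. Combined with the model-square property, this shows that $GA\to GB\to GD$ (with implicit vertex $GC$) is a homotopy fiber sequence in $\tt N\,.$

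For the long case, I would simply apply the single-sequence argument to each of the constituent homotopy fiber sequences \eqref{SofSObj}. By the definition of objectwise fibrant, every such square has all four vertices fibrant in $\tt M\,,$ so the image under $G$ is again a homotopy fiber sequence in $\tt N$ by the previous paragraph. Assembling these gives a long homotopy fiber sequence $\cdots\to GA_2\to GA_1\to GA_0$ in $\tt N\,,$ which proves the corollary.

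No genuine obstacle is expected; the statement is essentially a packaging corollary. The only subtlety worth being explicit about is that we really do need all four vertices of each square to be fibrant in order to invoke Proposition \ref{QuiPresMod} and Brown's lemma, which is exactly the content of the hypothesis ``objectwise fibrant''; without it, $G$ would not be known to preserve either the model-square structure or the weak equivalence $C\stackrel{\sim}{\to} 0_{\tt M}\,.$
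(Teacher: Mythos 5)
Your proof is correct and takes essentially the same approach as the paper: the paper's own proof is a one-line remark that it suffices to recall that a right Quillen functor preserves weak equivalences between fibrant objects and preserves the (fibrant) zero object, and your write-up simply spells out in detail how those two facts, combined with Proposition \ref{QuiPresMod}, give acyclicity of $GC$ and the model-square property of $G(ABCD)$.
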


\begin{proof}
If suffices to remember that a right Quillen functor $G$ preserves weak equivalences between fibrant objects and preserves the (fibrant) zero object (see \eqref{PresTerm}).
\end{proof}

\begin{rem}
\emph{We will show below that in a pointed model category it is often possible to extend a morphism to a long homotopy fiber sequence. In the next subsection we address the question of uniqueness of such an extension if it exists.}
\end{rem}

\subsection{Homotopy theory of long homotopy fiber sequences}

Let $\tt C$ be a category and $W$ a family of $\tt C$--morphisms. The {\bf Gabriel-Zisman localization} or {\bf zigzag localization} of $\tt C$ at $W$ is (at least in a higher universe) a pair $({\tt C}[[W^{-1}]], \zg)$ that consists of a category ${\tt C}[[W^{-1}]]$ and a functor $\zg: {\tt C} \to {\tt C}[[W^{-1}]]$ which sends all morphisms in $W$ to isomorphisms. Further, every functor out of $\tt C$ with this property factors uniquely and on the nose through ${\tt C}[[W^{-1}]]\,.$ Because of its universal property, the zigzag localization is unique up to a unique isomorphism. It is the strong localization of $\tt C$ at $W$ \cite{CompTheo} and is constructed by free inversion of the morphisms of $W$. More precisely, the description of $({\tt C}[[W^{-1}]],\zg)$ is exactly the one given in \cite{CompTheo} in the case of the homotopy category of a model category. As already mentioned, the localized category does not have to be locally small, so we implicitly move to a higher universe in order to get a genuine category. If the definition of $W$ is clear, we usually refer to ${\tt C}[[W^{-1}]]$ as the homotopy category of $\tt C$ and denote it with $\tt Ho(C)\,.$\medskip

In the case of the category $\ell({\tt M})$ of long homotopy fiber sequences of a pointed model category $\tt M\,$, we choose the objectwise weak equivalences for $W$ and consider the homotopy category $\tt Ho(\ell(M))$ in the previous sense.\medskip

Since we are interested in possible extensions of an $\tt M$--morphism to a long homotopy fiber sequence of $\tt M\,$, we need not only the categories $\ell({\tt M})$ and $\tt Ho(\ell(M)),$ but also the category ${\tt M}^{\scriptscriptstyle\to}\!$ of $\tt M$--morphisms \be A_1\stackrel{a_1}{\longrightarrow} A_0\;\,\ee and commutative squares
\begin{equation}\label{CS}
\begin{tikzpicture}
 \matrix (m) [matrix of math nodes, row sep=2.5em, column sep=1.5em]
   {A_1 && A_0\\
    B_1 && B_0\\};
 \path[->]
 (m-1-1) edge [->] node[auto] {\small{$a_1$}} (m-1-3)
 (m-2-1) edge [->] node[auto] {\small{$b_1$}} (m-2-3)
 (m-1-1) edge [->] node[auto] {\small{$\psi_1$}} (m-2-1)
 (m-1-3) edge [->] node[auto] {\small{$\psi_0$}} (m-2-3);
\end{tikzpicture}
\end{equation}
and its homotopy category $\tt Ho(M^\s\!)\,$. To give meaning to the latter, we endow the category $\tt M^\s\!$ with a model structure, so that its zigzag localization at its weak equivalences, which is its Quillen homotopy category, is a genuine category without us passing into a larger universe (see for example \cite[Theorem 2]{CompTheo}). To find a model structure, notice that $\tt M^\s$ is the functor category $\tt Fun(I,M)\,,$ where $\tt I$ is the inverse category ${\tt I}=\{1\to 0\}\,.$ In the case of such simple Reedy categories, the corresponding Reedy model structure is the injective model structure with objectwise weak equivalences and cofibrations (details can be found for instance in \cite{Models}). We equip $\tt M^\s$ with this model structure.\medskip

To study the mentioned extension problem, we introduce the {\bf restriction functor} $$R_1:\tt \ell(M)\to M^\s\;,$$ which we define on objects as $R_1a_\bullet=a_1$ (see \eqref{LHFS}) and on morphisms as $R_1\Phi_\bullet=(\zf_1,\zf_0)$ (see \eqref{MLHFS}). The functor $\zg_{\tt M^\s\!}\circ R_1$ sends objectwise weak equivalences of $\ell(M)$ to isomorphisms of $\tt Ho(M^\s)$ and therefore factors uniquely through $\tt Ho(\ell(M))\,:$ there is a unique functor $$\op{Ho}(R_1):{\tt Ho(\ell(M))\to Ho(M^\s)}\,,\;\text{such that}\; \zg_{\tt M^\s\!}\circ R_1=\op{Ho}(R_1)\circ\zg_{\ell({\tt M})}\;.$$

\begin{theo}\label{EquivHoCat}
Let $\tt M$ be a pointed model category. The localization $$\op{Ho}(R_1):{\tt Ho(\ell(M))\to Ho(M^\s)}$$ of the restriction functor of long homotopy fiber sequences yields an equivalence of categories between the homotopy category of the category of long homotopy fiber sequences of $\tt M$ and the homotopy category of the category of morphisms of $\tt M\,.$
\end{theo}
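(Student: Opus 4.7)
The plan is to establish the equivalence by producing a pseudo-inverse $E:{\tt M}^{\s}\to\ell({\tt M})$ and verifying that the two composites are naturally isomorphic to the respective identities, rather than attacking essential surjectivity and full faithfulness in parallel. This is also the natural setting to produce the explicit description of the inverse of the Hom-set maps announced in the introduction.

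\textbf{Step 1 (Puppe-type extension giving $E$).} Given $f:A_1\to A_0$ in ${\tt M}^{\s}$, I would first apply a functorial fibrant replacement in the injective model structure on ${\tt M}^{\s}$ and then factor the replaced map as a trivial cofibration followed by a fibration. At the cost of a weak equivalence in ${\tt M}^{\s}$, $f$ may therefore be assumed to be a fibration between fibrant objects. The long sequence $\cdots\to A_3\to A_2\to A_1\to A_0$ is then built inductively: $A_{n+1}$ is the standard pullback of the cospan $A_n\to A_{n-1}\leftarrow 0$, and the resulting map $A_{n+1}\to A_n$ is replaced functorially by a fibration between fibrant objects before the next iteration. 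By Corollary \ref{CorLur} each successive square is a model square, and its lower-left vertex $0$ is trivially acyclic, so the whole sequence lies in $\ell({\tt M})$. Functoriality of all replacements, combined with Proposition \ref{ComPara}, ensures that an objectwise weak equivalence in ${\tt M}^{\s}$ induces an objectwise weak equivalence in $\ell({\tt M})$ at every stage, so $E$ descends to a functor $\op{Ho}(E):\tt Ho(M^\s)\to Ho(\ell(M))$.

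\textbf{Step 2 (easy composite).} The composite $\op{Ho}(R_1)\circ\op{Ho}(E)$ sends $f$ to its functorial fibrant-plus-fibration replacement, and the replacement maps assemble into a natural weak equivalence in ${\tt M}^{\s}$, which supplies the natural isomorphism with the identity.

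\textbf{Step 3 (hard composite).} Given $a_\bullet\in\ell({\tt M})$, I would produce inductively an objectwise weak equivalence $E(a_1)\Rightarrow a_\bullet$. The base case is the comparison from Step 2. For the inductive step, both $E(a_1)_{n+1}$ and $A_{n+1}$ are apices of model squares whose cospans $E(a_1)_n\to E(a_1)_{n-1}\leftarrow 0$ and $A_n\to A_{n-1}\leftarrow 0$ are weakly equivalent by the previous stage; Proposition \ref{weq} handles the weak equivalence between the acyclic vertices, and Proposition \ref{ComPara} then forces the two model squares to have weakly equivalent apices. Uniqueness of full homotopy pullbacks (Theorem \ref{IndHoPull} and Theorem \ref{IndHoPullMod}) yields a weak equivalence $E(a_1)_{n+1}\to A_{n+1}$, possibly via a zigzag that becomes an isomorphism in $\tt Ho(\ell(M))$. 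Assembling the stages produces the desired natural isomorphism $\op{Ho}(E)\circ\op{Ho}(R_1)\cong\op{Id}$.

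\textbf{The main obstacle} is Step 3: at each stage one must not only exhibit weak equivalences of vertices, but also arrange that the cube providing the morphism of homotopy fiber sequences commutes and, most delicately, that the implicit acyclic lower-left vertices are related by a weak equivalence $\zvf_n$ compatible with the rest of the data and with the preceding stage. Doing this only up to weak equivalence, while retaining enough naturality in $a_\bullet$ and $b_\bullet$ to yield the explicit formula for the inverse of $\op{Ho}(R_1)_{a_\bullet,b_\bullet}$ announced in the introduction, is precisely the careful bookkeeping that occupies pages 14--26 of the article.
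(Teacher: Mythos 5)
Your Steps 1 and 2 match the paper's construction of the functor $E$ (Lemma \ref{FunE}) and the natural weak equivalence $\zvw:\id_{{\tt M}^\s}\stackrel{\sim}{\Rightarrow}R_1\circ E$ (Lemma \ref{NatWeqs}) closely, and both are sound. The gap is in Step 3, and it is the crux of the theorem.

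You propose to ``produce inductively an objectwise weak equivalence $E(a_1)\Rightarrow a_\bullet$'' and then hedge with ``possibly via a zigzag that becomes an isomorphism in ${\tt Ho}(\ell({\tt M}))$''. Neither direction of a direct natural transformation exists. A map $E(a_1)\to a_\bullet$ is hopeless because $E(a_1)$ consists of cofreely built fibrant objects with no canonical map back into arbitrary $a_\bullet$. A map $a_\bullet\to E(a_1)$ fails already at the implicit acyclic vertices: in $E(a_1)$ these are obtained by factoring $0\to F^{a_1}_{n-1}$, whereas the acyclic vertices $C_{A_n}$ of $a_\bullet$ are arbitrary, and there is no natural morphism $C_{A_n}\to C_{F^{a_1}_n}$. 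Your invocation of Theorems \ref{IndHoPull} and \ref{IndHoPullMod} and Propositions \ref{weq} and \ref{ComPara} yields, at each stage and for each fixed object $a_\bullet$, a weak equivalence of vertices \emph{up to a zigzag}, but that zigzag is not canonically given as a morphism of $\ell({\tt M})$, and a pointwise zigzag is not automatically natural in $a_\bullet$. The tools you would want to descend to homotopy categories (Lemmas \ref{NatTraMHoM} and \ref{DescWeq}) only apply to genuine natural transformations at the un-localized level, so you cannot conclude naturality from object-level zigzags.

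The missing idea is the intermediate endofunctor $I:\ell({\tt M})\to\ell({\tt M})$ of Lemma \ref{FunI}. The point is that $I(a_\bullet)$ is built from the data of $a_\bullet$ \emph{and} from the same functorial factorization system that defines $E$: the acyclic vertex $C_{\fF^a_n}$ arises by factoring $C_{A_n}\to\fF^a_{n-1}$, so the factorization functor simultaneously provides the natural arrow $C_{A_n}\to C_{\fF^a_n}$ (giving $\zw^a_\bullet:a_\bullet\to I(a_\bullet)$) and the natural arrow $C_{F^{a_1}_n}\to C_{\fF^a_n}$ via the commutative square from $0\to F^{a_1}_{n-1}$ to $C_{A_n}\to\fF^a_{n-1}$ (giving $\zu^a_\bullet:E(R_1(a_\bullet))\to I(a_\bullet)$). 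This realizes the needed comparison as a \emph{functorial} span $a_\bullet\to I(a_\bullet)\leftarrow E(R_1(a_\bullet))$, i.e.\ two natural weak equivalences $\zw:\id\stackrel{\sim}{\Rightarrow}I$ and $\zu:E\circ R_1\stackrel{\sim}{\Rightarrow}I$, to which Lemma \ref{DescWeq} applies. Without introducing such a common target functor, or some equivalent device, your Step 3 cannot be completed; identifying $I$ is precisely the ``careful bookkeeping'' you flag as the main obstacle, and it is a conceptual ingredient rather than merely bookkeeping.
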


\begin{rem}\label{Lemff}
\emph{The categorical equivalence means that the localized functor $\op{Ho}(R_1)$ is essentially surjective and fully faithful. In other words, every $\tt M$--morphism is up to an isomorphism the restriction of a long homotopy fiber sequence and, for every $a_\bullet, b_\bullet\in \ell({\tt M})\,,$ the map \be\label{FullFaith}\op{Ho}(R_1)_{a_\bullet b_\bullet}:\op{Hom}_{\tt Ho(\ell(M))}(a_\bullet,b_\bullet)\to \op{Hom}_{\tt Ho(M^\s\!)}(a_1,b_1)\ee is a 1:1 correspondence. Alternatively, the equivalence means that $\op{Ho}(R_1)$ has an inverse up to natural isomorphisms. To prove the theorem we construct this inverse.}
\end{rem}

\begin{lem}\label{FunE}
Let $\tt M$ be a pointed model category. There exists a functor $E:\tt M^\s\to \ell(M)$ that preserves weak equivalences and sends morphisms $a\in\tt M^\s$ to long homotopy fiber sequences $f^a_\bullet$ made of fibrations $f^a_n:F^a_n\to F^a_{n-1}$ between fibrant objects $(n\in\{1,2,3\cdots\})\,.$
\end{lem}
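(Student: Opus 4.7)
The plan is to build $E(a)$ inductively from $a:A_1\to A_0$ in $\tt M^\s\,$, using only the two functorial factorization systems of $\tt M$ together with functorial pullbacks. First I would apply the functorial fibrant replacement $F$ to both endpoints of $a$ to obtain $Fa:FA_1\to FA_0$ between fibrant objects, and then factor $Fa$ functorially as a trivial cofibration followed by a fibration, setting $F^a_0:=FA_0$ and letting $f^a_1:F^a_1\twoheadrightarrow F^a_0$ be this fibration (so $F^a_1$ is automatically fibrant). For the inductive step, given a fibration $f^a_n:F^a_n\twoheadrightarrow F^a_{n-1}$ between fibrant objects, I would functorially factor the zero morphism $0\to F^a_{n-1}$ as $0\stackrel{\sim}{\hookrightarrow}P_0F^a_{n-1}\stackrel{\epsilon_{n-1}}{\twoheadrightarrow}F^a_{n-1}\,$, producing a functorial acyclic ``based path space'' $P_0F^a_{n-1}$ equipped with a fibration $\epsilon_{n-1}$ onto $F^a_{n-1}\,$, and then set $F^a_{n+1}:=F^a_n\times_{F^a_{n-1}}P_0F^a_{n-1}$ with $f^a_{n+1}:F^a_{n+1}\to F^a_n$ the pullback projection. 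Since fibrations are stable under pullback, $f^a_{n+1}$ is a fibration (as a pullback of $\epsilon_{n-1}$), and $F^a_{n+1}$ is fibrant. The standard pullback square $(F^a_{n+1},F^a_n,P_0F^a_{n-1},F^a_{n-1})$---a pullback along the fibration $f^a_n$ between fibrant objects with the fourth corner also fibrant---is then a model square by Corollary \ref{CorLur}, and since its lower-left corner $P_0F^a_{n-1}$ is acyclic it is a homotopy fiber sequence; iterating yields the required long homotopy fiber sequence $E(a):=f^a_\bullet\,$.

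On morphisms, a square $(\psi_1,\psi_0):a\to a'$ in $\tt M^\s$ is transported through $F\,$, through the functorial trivial cofibration--fibration factorization, through the functorial factorization defining $P_0$ at each stage (applied to the maps induced inductively on each $F^a_{n-1}\to F^{a'}_{n-1}$), and through the universal property of pullback (with a fixed choice of representing pullback at each stage) to produce a morphism $E(\psi_1,\psi_0):E(a)\to E(a')$ in $\ell({\tt M})\,$. Since every construction step is functorial, $E$ is a functor.

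For preservation of weak equivalences, suppose $\psi_0,\psi_1$ are weak equivalences. The naturality square of $\mathrm{id}\Rightarrow F$ together with 2-out-of-3 forces $F\psi_0,F\psi_1$ to be weak equivalences between fibrant objects; a second application of 2-out-of-3 in the ladder produced by the functorial trivial cofibration--fibration factorization of $Fa\to Fa'$ then yields that $F^a_1\to F^{a'}_1$ is a weak equivalence between fibrant objects. Inductively, assuming $F^a_n\to F^{a'}_n$ and $F^a_{n-1}\to F^{a'}_{n-1}$ are weak equivalences between fibrant objects, the same 2-out-of-3 argument applied to the $P_0$--factorization ladder gives $P_0F^a_{n-1}\to P_0F^{a'}_{n-1}$ a weak equivalence between fibrant objects, and the induced map $F^a_{n+1}\to F^{a'}_{n+1}$ on pullbacks is a weak equivalence by the standard fact that pullback along a fibration between fibrant objects is invariant under a componentwise weak equivalence of cospans---a Reedy-fibrant pullback argument that follows from Theorem \ref{IndHoPull} together with compatibility of the universal map to the pullback with homotopy pullbacks. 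The main subtlety I anticipate is precisely this last cube-invariance step: one must check rigorously that the induced map between the two standard pullbacks (each a model of its cospan's full homotopy pullback) is itself a weak equivalence and not merely zigzag-related to one.
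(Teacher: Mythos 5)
Your construction is the same as the paper's: apply a functorial fibrant replacement, then the functorial trivial cofibration--fibration factorization to get the first fibration $f^a_1$, and iterate by factoring $0\to F^a_{n-1}$ into a trivial cofibration followed by a fibration and pulling back the resulting cospan. The ``cube-invariance'' step you flag as the main subtlety is indeed the crux, and the paper settles it more directly than an appeal to Theorem~\ref{IndHoPull} would: at each stage the cospans $C_{F^a_n}\twoheadrightarrow F^a_{n-1}\twoheadleftarrow F^a_n$ (your $P_0F^a_{n-1}\twoheadrightarrow F^a_{n-1}\twoheadleftarrow F^a_n$) are \emph{fibrant objects} of $\tt Fun(I,M)$ with its injective model structure, since both legs are fibrations onto a fibrant target; the induced map of cospans is an objectwise --- hence injective --- weak equivalence between fibrant objects; and $\op{Lim}:\tt Fun(I,M)\to M$ is a right Quillen functor for this structure, so by Brown's lemma it carries this weak equivalence to a weak equivalence, which is precisely the statement that the universal map between your two standard pullbacks is a weak equivalence (not merely zigzag-related to one). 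So your ``Reedy-fibrant pullback argument'' is the right instinct; the clean formulation is right Quillen $+$ Brown's lemma rather than a detour through the indeterminacy theorem.
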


\begin{rem}\label{Fixed}\emph{In the following we use a fixed functorial trivial cofibration - fibration factorization system $(\za,\zb)$ and the induced fibrant C-replacement functor $R\,.$}\end{rem}

\begin{proof}
Let $a_1,b_1$ be objects of $\tt M^\s$ and $\psi=(\psi_0, \psi_1)$ an $\tt M^\s$--morphism between them, see \eqref{CS}. We will construct $E$ simultaneously and inductively on these objects and this morphism, i.e., we construct inductively long homotopy fiber sequences $E(a_1)=:f^{a_1}_\bullet$ and $E(b_1)=:f^{b_1}_\bullet$ and a morphism $E(\psi)=:\Phi_\bullet$ of long homotopy fiber sequences between them. We start from the commutative diagram $\psi$ and use the chosen replacement functor and factorization system to get the commutative diagram
\be\label{Start}
\begin{tikzcd}
RA_1\arrow[r, "\za(Ra_1)"]\arrow[r,swap,"\sim"]\arrow[d, swap, "R\psi_1"]&F(Ra_1)\arrow[r,two heads,"\zb(Ra_1)"]\arrow[d]&RA_0\arrow[d,"R\psi_0"]\\
 RB_1\arrow[r, "\za(Rb_1)"]\arrow[r,swap,"\sim"]&F(Rb_1)\arrow[r,two heads,"\zb(Rb_1)"]&RB_0
\end{tikzcd}
\ee
We denote the upper and lower fibrations between fibrant objects by $f^{a_1}_1:F^{a_1}_1\to F^{a_1}_0$ and $f^{b_1}_1:F^{b_1}_1\to F^{b_1}_0,$ respectively. It is clear that if $\psi$ is a weak equivalence, all the vertical arrows in \eqref{Start} are weak equivalences, in particular the central arrow $\phi_1:F^{a_1}_1\to F^{b_1}_1$ and the right arrow $\phi_0:F^{a_1}_0\to F^{b_1}_0\,$. Assume now that the long homotopy fiber sequences $f^{a_1}_\bullet$, $f^{b_1}_\bullet$ (resp., the morphism $\Phi_\bullet$ between them) have (resp., has) been constructed together with their implicit vertices (resp., its implicit arrows) and with all the required properties, up to order $n\ge 1\,$. If we apply the functorial factorization $(\za,\zb)$ to $0\to F^{a_1}_{n-1}$ and $0\to F^{b_1}_{n-1}$, we get the commutative diagram
\be
\begin{tikzcd}[row sep=1em, column sep=1em]
&&F^{a_1}_{n+1}\arrow[rr,"f^{a_1}_{n+1}"] \arrow[dr,dashed,"\phi_{n+1}"] \arrow[dd]\arrow[dr, phantom, very near start, color=black] && F^{a_1}_{n} \arrow[dd, near start, "f^{a_1}_n", two heads] \arrow[dr,"\phi_n"] \\
&&& F^{b_1}_{n+1}\arrow[rr,crossing over, near start, "f^{b_1}_{n+1}"]\arrow[dr, phantom , very near start, color=black] &&  F^{b_1}_{n}\arrow[dd,crossing over, "f^{b_1}_n", two heads] \\
0\arrow[rr,tail,"\sim"]\arrow[dr, swap, "\sim"]\;&&C_{F^{a_1}_{n}} \arrow[rr,two heads] \arrow[dr,"\zvf_n"] && F^{a_1}_{n-1} \arrow[dr,"\phi_{n-1}"] \\
&0\arrow[rr,tail,"\sim"]&& C_{F^{b_1}_{n}} \arrow[rr,two heads] \arrow[uu,<-]&& F^{b_1}_{n-1}
\end{tikzcd}
\ee
in which $F^{a_1}_{n+1}$ and $F^{b_1}_{n+1}$ are pullbacks and $\phi_{n+1}$ is the universal morphism. Both pullback diagrams are canonically homotopy fiber sequences and, as already mentioned, the cube is commutative. So we built two long homotopy fiber sequences $E(a_1)=f^{a_1}_\bullet$ and $E(b_1)=f^{b_1}_\bullet$ and a morphism $E(\psi)=\Phi_\bullet$ between them. Since $(\za,\zb)$ is functorial (and the induced $R$ is a functor), the assignment $\psi\mapsto E(\psi)=\Phi_\bullet$ respects compositions and identities: $E$ is a functor $E:{\tt M^\s}\to\ell({\tt M})\,.$ Since fibrations are closed under pullbacks, the morphisms $f^{a_1}_{n+1},f^{b_1}_{n+1}$ are fibrations and the objects $F^{a_1}_{n+1},F^{b_1}_{n+1}$ are fibrant. Finally, the cospans of the back and the front square are fibrant in the injective model structure, see for instance \cite{Models}. Now, if $\zf_{n-1}$ and $\zf_n$ are weak equivalences, these fibrant cospans are weakly equivalent, as $\zvf_n$ is obviously a weak equivalence. However, the pullback functor is a right Quillen functor if the category of cospans is equipped with its injective model structure, so that it sends weak equivalences between fibrant objects to weak equivalences: $\phi_{n+1}$ is a weak equivalence. This means that $E(\psi)=\Phi_\bullet$ is an objectwise weak equivalence if $\psi$ is a weak equivalence.
\end{proof}\smallskip

\begin{lem}\label{FunI}
Let $\tt M$ be a pointed model category. There exists a functor $I:\ell({\tt M})\to\ell({\tt M})$ that preserves weak equivalences and sends long homotopy fiber sequences $a_{\bullet}$ to long homotopy fiber sequences $\ff^a_\bullet$ made of fibrations $\ff^{a}_n:\fF^a_n\to \fF^a_{n-1}$ between fibrant objects $(n\in\{1,2,3\cdots\})\,.$
\end{lem}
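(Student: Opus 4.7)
The plan is to take $I$ to be the composition $E \circ R_1$ of the restriction functor $R_1 : \ell({\tt M}) \to {\tt M}^\s$ with the functor $E : {\tt M}^\s \to \ell({\tt M})$ constructed in Lemma~\ref{FunE}. Since $R_1(a_\bullet) = a_1$, setting $\ff^a_\bullet := I(a_\bullet) = E(a_1) = f^{a_1}_\bullet$, $\fF^a_n := F^{a_1}_n$ and $\ff^a_n := f^{a_1}_n$, Lemma~\ref{FunE} immediately guarantees that the output is a long homotopy fiber sequence whose structural morphisms are fibrations between fibrant objects, which is exactly what is being asserted.

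Functoriality of $I$ is automatic since it is a composition of two functors. It remains to check preservation of weak equivalences. Given an objectwise weak equivalence $\Phi_\bullet : a_\bullet \to b_\bullet$ in $\ell({\tt M})$, the components $\phi_n$ are all weak equivalences of $\tt M$; in particular $\phi_0$ and $\phi_1$ are, so that $R_1(\Phi_\bullet) = (\phi_1,\phi_0)$ is a weak equivalence of ${\tt M}^\s$ for the injective model structure (whose weak equivalences are the objectwise ones). By the concluding paragraph of the proof of Lemma~\ref{FunE}, $E$ sends such weak equivalences to objectwise weak equivalences of $\ell({\tt M})$, so $I(\Phi_\bullet) = E(R_1(\Phi_\bullet))$ is an objectwise weak equivalence, as required.

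No real obstacle is expected: the entire content of the lemma is prepared by Lemma~\ref{FunE}, and the only thing to observe is that the injective model structure on ${\tt M}^\s$ makes weak equivalences in ${\tt M}^\s$ the same as pairs of weak equivalences in $\tt M$, so that the restriction $R_1$ trivially preserves weak equivalences. The conceptual remark worth making is that $I(a_\bullet)$ depends only on the first morphism $a_1$ of $a_\bullet$ and is not a priori objectwise weakly equivalent to $a_\bullet$; $I$ is therefore not a ``fibrant replacement'' of $\ell({\tt M})$ in the usual sense, but a tool that extracts, from any long homotopy fiber sequence, a distinguished representative made of fibrations between fibrant objects lying in the same category $\ell({\tt M})$. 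The comparison of $a_\bullet$ and $I(a_\bullet)$ in $\mathrm{Ho}(\ell({\tt M}))$, needed in the proof of Theorem~\ref{EquivHoCat}, will be addressed separately.
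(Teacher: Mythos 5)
Your proposal does prove the literal statement of Lemma \ref{FunI}: the composite $E\circ R_1$ is a functor $\ell({\tt M})\to\ell({\tt M})$ preserving weak equivalences and landing in long homotopy fiber sequences of fibrations between fibrant objects. But this is \emph{not} the functor $I$ the paper constructs, and the divergence is exactly where the lemma earns its keep. The paper builds $I(a_\bullet)$ inductively using the \emph{whole} of $a_\bullet$, not just $a_1$: at each stage it factors $C_{A_n}\to\fF^a_{n-1}$ (where $C_{A_n}$ is the implicit vertex of $a_\bullet$) rather than $0\to F^{a_1}_{n-1}$, and takes the pullback $\fF^a_{n+1}:=\fF^a_n\times_{\fF^a_{n-1}}C_{\fF^a_n}$. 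The payoff is that the weak-equivalence leg $C_{A_n}\to C_{\fF^a_n}$ of this factorization is the implicit arrow of the natural weak equivalence $\zw:\id_{\ell({\tt M})}\stackrel{\sim}{\Rightarrow}I$ of Corollary \ref{NatWeq}, which the paper explicitly states follows ``from the previous proof'' and which is indispensable for Theorem \ref{EquivHoCat}.

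With your choice $I=E\circ R_1$, that corollary does not come for free, and it is doubtful it can be salvaged for this functor at all: to define the implicit arrow $C_{A_n}\to C_{F^{a_1}_n}$ you would have to solve the lifting problem of $0\to C_{A_n}$ against $C_{F^{a_1}_n}\twoheadrightarrow F^{a_1}_{n-1}$, and neither the left map is a trivial cofibration nor the right one a trivial fibration in general, let alone functorially. Your closing remark acknowledges that $a_\bullet$ and $I(a_\bullet)$ are ``not a priori objectwise weakly equivalent'' and promises to address the comparison in ${\tt Ho}(\ell({\tt M}))$ ``separately,'' but this understates the problem: the only route the paper offers for that comparison is through Corollary \ref{NatWeq} and Lemma \ref{DescWeq}, and the paper's construction of $I$ is designed precisely so that $\zw$ exists at the point-set level. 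The inductive construction you tried to bypass is not optional overhead; it is what makes $\zw$ natural. Your proof is therefore formally valid for the statement in isolation but incompatible with the role the lemma plays downstream.
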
\smallskip

\begin{proof}
Let $a_\bullet,b_\bullet$ be objects of $\ell({\tt M})$ and $\Psi_\bullet$ an $\ell({\tt M})$--morphism between them. We will construct $I$ simultaneously and inductively on these objects and this morphism. In other words, we will construct long homotopy fiber sequences $I(a_\bullet)=:\ff^a_\bullet$ and $I(b_\bullet)=:\ff^b_\bullet$ and a morphism $I(\Psi_\bullet)=:\zY_\bullet$ of long homotopy fiber sequences between them. At the same time we build step by step a natural weak equivalence $\zw:\id_{\ell({\tt M})}\stackrel{\sim}{\Rightarrow} I\,,$ i.e., we build step by step a commutative diagram
\be
\begin{tikzcd}\label{Goal}
a_\bullet\arrow[r,"\zw^a_\bullet"]\ar[r, swap, "\sim"]\arrow[d,"\Psi_\bullet"]& \ff^a_\bullet\arrow[d,"\zY_\bullet"]\\
b_\bullet\arrow[r, swap, "\sim"] \ar[r,"\zw^b_\bullet"]& \ff^b_\bullet
\end{tikzcd}
\ee
If we apply $R_1$ to $\Psi_\bullet:a_\bullet\to b_\bullet\,,$ we get $\psi=(\psi_0,\psi_1):a_1\to b_1$ and if we apply $E$ to the latter, we get $\Phi_\bullet:f^{a_1}_\bullet\to f^{b_1}_\bullet\,.$ We choose the first two terms $\phi=(\phi_0,\phi_1):f^{a_1}_1\to f^{b_1}_1$ of $\Phi_\bullet$ as the first two terms $\zy=(\zy_0,\zy_1):\ff^{a}_1\to \ff^{b}_1$ of $\zY_\bullet$ (this includes choosing the first two terms $(F^{a_1}_0,F^{a_1}_1)$ as the first two terms $(\fF^a_0,\fF^a_1)$ and similarly for $b$). Let us remember that above we defined $\zy$ as follows (see Diagram \eqref{Start}):
\pagebreak
\be\label{Start2}
\begin{tikzcd}[row sep=1.2em, column sep=1.5em]
A_1\ar[rrr,"a_1"]\ar[dd,"\psi_1"]\ar[dr,"\sim", tail] & & & A_0\ar[dd, near start, "\psi_0"]\ar[dr,"\sim", tail] &\\
& RA_1\arrow[r, "\za(Ra_1)"]\arrow[r,swap,"\sim"]\arrow[dd, swap, near start, "R\psi_1"]&\fF^{a}_1\arrow[rr,two heads, near start, "\ff^{a}_1"]\arrow[dd, near start,"\zy_1"]& &\fF^{a}_0\arrow[dd, "\zy_0"]\\
B_1\ar[rrr,"b_1"]\ar[dr,"\sim", tail] & & & B_0\ar[dr,"\sim", tail] &\\
& RB_1\arrow[r, "\za(Rb_1)"]\arrow[r,swap,"\sim"]&\fF^{b}_1\arrow[rr,two heads,"\ff^{b}_1"]& &\fF^{b}_0
\end{tikzcd}
\ee

Diagram \eqref{Goal} is a diagram in $\ell({\tt M})\,,$ so that every arrow is a sequence of commutative cubes. Its commutativity means that the $n$th cube of the down-right composite (which is the composite of a cube $A\rightsquigarrow B$ and a cube $B\rightsquigarrow\fF^b\,,$ where $\rightsquigarrow$ denotes a morphism between squares) coincides with the $n$th cube of the right-down composite (which is the composite of a cube $A\rightsquigarrow\fF^a$ and a cube $\fF^a\rightsquigarrow \fF^b$), for every $n\in\{1,2,3,\cdots\}$. The commutative diagram \eqref{Start2} means that half of this condition is satisfied for the $1$st cubes and it shows that $\zw^a=(\zw^a_0,\zw^a_1):a_1\to \ff^a_1$ and $\zw^b$ consist of two weak equivalences and that $\zy$ is made of weak equivalences if $\psi$ is. Assume now that the sequences $\fF^a_\bullet\,$ $\fF^b_\bullet\,,$ $\ff^a_\bullet\,,$ $\ff^b_\bullet\,,$ $\zw^a_\bullet\,,$ $\zw^b_\bullet$ and $\zY_\bullet$ have been constructed with all the required properties and implicit vertices or arrows up to order $n\ge 1$ and that the commutation condition of Diagram \eqref{Goal} is fulfilled up to half the condition for the $n$th cubes.
\be\label{TouDia}
\begin{tikzcd}[row sep=0.5em, column sep=0.75em]
&&&&A_{n+1}\ar[ddllll]\arrow[dr,dashed] \arrow[rr] \arrow[dddd] && A_n \ar[ddllll]\arrow[dddd] \arrow[dr,"\sim"] \\
&&&&& \fF^a_{n+1}\ar[ddllll,dashed]\arrow[dddd]\arrow[rr,crossing over,two heads] &&  \fF^a_n \ar[ddllll,crossing over]\arrow[dddd,two heads] \\
B_{n+1}\arrow[dr,dashed] \arrow[rr] \arrow[dddd] && B_n \arrow[dddd] \arrow[dr,"\sim"] \\
& \fF^b_{n+1}\arrow[rr,crossing over,two heads] &&  \fF^b_n \arrow[dddd,two heads] \\
&&&&C_{A_n} \ar[ddllll]\arrow[rr] \arrow[dr,"\sim"] && A_{n-1} \ar[ddllll]\arrow[dr,"\sim"] \\
&&&&& C_{\fF^a_n} \ar[ddllll]\arrow[rr, two heads] && \fF^a_{n-1}\ar[ddllll]\\
C_{B_n} \arrow[rr] \arrow[dr,"\sim"] && B_{n-1} \arrow[dr,"\sim"] \\
& C_{\fF^b_n} \arrow[from=uuuu, crossing over]\arrow[rr, two heads] && \fF^b_{n-1}
\end{tikzcd}
\ee

In \eqref{TouDia}, the back commutative square of the right cube is a homotopy fiber sequence of $a_\bullet\,.$ The right commutative square of this cube is given by $\zw^a_n,a_n,\ff^a_n$ and $\zw^a_{n-1}\,.$ To get the lower commutative square, we decompose $C_{A_n}\to \fF^a_{n-1}$ into a weak equivalence followed by a fibration using our fixed functorial factorization (see Remark \ref{Fixed}). Now we take the pullback $$\fF^a_{n+1}:=\fF^a_n\times_{\fF^a_{n-1}} C_{\fF^a_{n}}\,,$$ use the fact that fibrations are closed under pullbacks, and complete the right commutative cube by the universal arrow $A_{n+1}\dashrightarrow \fF^a_{n+1}\,$. Since the back square is in particular a model square this universal arrow is a weak equivalence. The left commutative cube is constructed similarly. As a result of Corollary \ref{CorLur} the square $\fF^a$ (the front square of the right commutative cube) is a homotopy fiber sequence and extends $\ff^a_\bullet$ or $(\fF^a_\bullet,\ff^a_\bullet)$ which is made of fibrations between fibrant objects to order $n+1\,$. Analogously the square $\fF^b$ extends $(\fF^b_\bullet, \ff^b_\bullet)\,.$ The commutative cube $A\rightsquigarrow \fF^a$ (resp., $B\rightsquigarrow \fF^b$) is an objectwise weak equivalence of homotopy fiber sequences that extends the $\ell({\tt M})$--morphism $\zw^a_\bullet$ (resp., $\zw^b_\bullet$) to order $n+1$.\medskip

We now describe the six cubes of Diagram \eqref{TouDia} that contain sloping arrows to the left. In fact, the top and bottom of these cubes are fully described as soon as the two back and two front cubes are. The commutative back cube $A\rightsquigarrow B$ is a cube of the sequence $\Psi_\bullet:a_\bullet\to b_\bullet\,.$ The front cube $A\fF^a\rightsquigarrow B\fF^b$ is counterpart in order $n$ to the commutative cube \eqref{Start2} and it is commutative in view of the induction assumption that the commutation condition of Diagram \eqref{Goal} is satisfied up to half the condition for the $n$th cubes. Since the trivial cofibration - fibration factorization system used is functorial, the commutative rectangle
\be\label{FunFac}
\begin{tikzcd}
C_{A_n}\arrow[r,tail,"\sim"]\arrow[d]&C_{\fF^a_n}\arrow[r,two heads]\arrow[d]&\fF^a_{n-1}\arrow[d]\\
C_{B_n}\arrow[r,tail,"\sim"]&C_{\fF^b_n}\arrow[r,two heads]&\fF^b_{n-1}
\end{tikzcd}
\ee
induces a central vertical arrow that makes the left and right squares (which are two of the six bottom squares of \eqref{TouDia}) commutative. The universal arrow $\fF^a_{n+1}\dashrightarrow \fF^b_{n+1}$ renders the upper and left square of the front cube $\fF^a\rightsquigarrow \fF^b$ commutative. Finally, in the back cube $A\fF^aC\rightsquigarrow B\fF^bC$, the top square is commutative because of the uniqueness of the universal arrow. This completes the description of the fully commutative diagram \eqref{TouDia}. Notice that the commutative cube $\fF^a\rightsquigarrow\fF^b$ extends $\zY_\bullet$ to order $n+1$ and remember that we still have to show that the complete commutation condition for the $n$th cubes is now fulfilled, that is, the composite cubes $A\rightsquigarrow B\rightsquigarrow \fF^b$ and $A\rightsquigarrow \fF^a\rightsquigarrow \fF^b$ in \eqref{TouDia} coincide. Since the diagram \eqref{TouDia} is fully commutative, this requirement is met. As the full commutation condition for the $n$th cubes includes the first half of the condition for the $(n+1)$th cubes, induction works. Eventually, if $\Psi_\bullet$ is an objectwise weak equivalence, it follows from the 2-out-of-3 axiom that $\zY_\bullet$ is an objectwise weak equivalence. This completes the proof.
\end{proof}

From the previous proof it follows that:

\begin{cor}\label{NatWeq}
There is a natural weak equivalence $$\zw:\id_{\ell({\tt M})}\stackrel{\sim}{\Rightarrow} I\,.$$
\end{cor}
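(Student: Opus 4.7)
The proof is a bookkeeping exercise on top of Lemma~\ref{FunI}: the inductive construction carried out there already produced, for each $a_\bullet\in\ell({\tt M})$, a morphism $\zw^a_\bullet\colon a_\bullet\to I(a_\bullet)$ and, for each $\Psi_\bullet\colon a_\bullet\to b_\bullet$ in $\ell({\tt M})$, established the commutativity of Diagram~\eqref{Goal}. My plan is therefore only to verify that every $\zw^a_\bullet$ is an objectwise weak equivalence and to point out that naturality is exactly what was proved.

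For the components, $\zw^a_0$ is the trivial cofibration $A_0\stackrel{\sim}{\rightarrowtail}RA_0=\fF^a_0$ of Diagram~\eqref{Start2}, while $\zw^a_1$ is the composite $A_1\stackrel{\sim}{\rightarrowtail}RA_1\stackrel{\sim}{\to}F(Ra_1)=\fF^a_1$ of a trivial cofibration with the weak equivalence $\za(Ra_1)$; both are weak equivalences. For $k\geq 2$, the component $\zw^a_k\colon A_k\to \fF^a_k$ is the dashed universal morphism from the model-square vertex $A_k$ of a homotopy fiber sequence of $a_\bullet$ to the standard pullback $\fF^a_k$ of the fibrant cospan $\fF^a_{k-1}\to \fF^a_{k-2}\leftarrow C_{\fF^a_{k-1}}$. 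This cospan contains a fibration and is weakly equivalent, through the previously constructed $\zw^a_{k-1}$, $\zw^a_{k-2}$ and the implicit arrow $C_{A_{k-1}}\to C_{\fF^a_{k-1}}$ induced by the functorial factorization in Diagram~\eqref{FunFac} (which is itself a weak equivalence by 2-out-of-3), to the cospan of that homotopy fiber sequence. Theorem~\ref{IndHoPullMod} then forces $\zw^a_k$ to be a weak equivalence.

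Naturality amounts, for every $\Psi_\bullet\colon a_\bullet\to b_\bullet$, to the identity $\zw^b_\bullet\circ\Psi_\bullet=I(\Psi_\bullet)\circ\zw^a_\bullet$ in $\ell({\tt M})$, and this is precisely the ``complete commutation condition for the $k$-th cubes'' whose inductive verification constituted the main work of Lemma~\ref{FunI}; it can be read off the full commutativity of Diagram~\eqref{TouDia}. The main obstacle was therefore already cleared inside the proof of Lemma~\ref{FunI}; the only point that remains is to recognise that the data produced there do not merely form an unrelated family of objectwise weak equivalences, but genuinely assemble into a natural transformation $\id_{\ell({\tt M})}\stackrel{\sim}{\Rightarrow} I$.
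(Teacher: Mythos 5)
Your proposal is correct and matches the paper's approach: the paper treats this corollary as an immediate consequence of the proof of Lemma~\ref{FunI}, where the family $\zw^a_\bullet$ and the commutativity of Diagram~\eqref{Goal} were constructed inductively alongside $I$, and your write-up simply makes explicit what the paper's one-line ``From the previous proof it follows'' elides. The only slight imprecision is that the arrow $C_{A_{k-1}}\to C_{\fF^a_{k-1}}$ is a trivial cofibration produced directly by the fixed factorization (not something deduced ``by 2-out-of-3''), but this does not affect the argument.
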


The next lemma will allow us to prove Theorem \ref{EquivHoCat}.

\begin{lem}\label{NatWeqs}
There are natural weak equivalences $$\zvw:\id_{\tt M^\s\!}\stackrel{\sim}{\Rightarrow}R_1\circ E$$ and $$\upsilon:E\circ R_1\stackrel{\sim}{\Rightarrow} I\;.$$
\end{lem}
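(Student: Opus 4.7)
The plan is to define both natural transformations explicitly using the data fixed in the constructions of $E$ and $I$, namely the fibrant C-replacement functor $R$ and the functorial factorization $(\za,\zb)$.

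\medskip

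\textbf{For $\zvw$:} Given a $\tt M^\s$--morphism $g:X_1\to X_0$, inspection of \eqref{Start} shows that $R_1\circ E(g)$ is the fibration $\zb(Rg):F(Rg)\twoheadrightarrow RX_0$ between fibrant objects. I take $\zvw^g$ to be the commutative square with top row $g$, bottom row $\zb(Rg)$, and vertical arrows $X_1\to RX_1\xrightarrow{\za(Rg)}F(Rg)$ and $X_0\to RX_0$. Its commutativity follows from the naturality of $R$ applied to $g$ together with $\zb(Rg)\circ\za(Rg)=Rg$; both vertical maps are weak equivalences; and naturality of $\zvw$ in $g$ is immediate from the functoriality of $R$ and of $(\za,\zb)$.

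\medskip

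\textbf{For $\upsilon$:} I construct $\upsilon^{a_\bullet}:E(R_1(a_\bullet))=f^{a_1}_\bullet\longrightarrow I(a_\bullet)=\fF^a_\bullet$ inductively. At levels $0$ and $1$ the two sequences coincide on the nose (both have objects $RA_0$, $F(Ra_1)$ and morphism $\zb(Ra_1)$, by the explicit choice of the first two terms made at the start of the proof of Lemma \ref{FunI}), so I take the identity there. At the inductive step from level $n$ to level $n+1$, the construction of $E$ factors $0\to F^{a_1}_{n-1}$ as $0\stackrel{\sim}{\to}C_{F^{a_1}_n}\twoheadrightarrow F^{a_1}_{n-1}$, while that of $I$ factors $C_{A_n}\to\fF^a_{n-1}$ as $C_{A_n}\stackrel{\sim}{\to}C_{\fF^a_n}\twoheadrightarrow\fF^a_{n-1}$. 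The unique morphism $0\to C_{A_n}$ together with $\upsilon^{a_\bullet}_{n-1}$ on the right assembles into an $\tt M^\s$--morphism $(0\to F^{a_1}_{n-1})\to(C_{A_n}\to\fF^a_{n-1})$, to which functoriality of $(\za,\zb)$ associates an induced map $C_{F^{a_1}_n}\to C_{\fF^a_n}$. Both source and target of this map are acyclic -- the source because the inclusion from $0$ is a trivial cofibration, the target because $C_{A_n}$ is the acyclic implicit vertex of a homotopy fiber sequence of $a_\bullet$ and $C_{A_n}\stackrel{\sim}{\to}C_{\fF^a_n}$ -- so 2-out-of-3 forces it to be a weak equivalence. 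The cospans $(F^{a_1}_n\to F^{a_1}_{n-1}\twoheadleftarrow C_{F^{a_1}_n})$ and $(\fF^a_n\to\fF^a_{n-1}\twoheadleftarrow C_{\fF^a_n})$ are thus weakly equivalent via $\upsilon^{a_\bullet}_n$, $\upsilon^{a_\bullet}_{n-1}$ and this induced map, and both consist of fibrations between fibrant objects. Corollary \ref{CorLur} then presents the standard pullbacks $F^{a_1}_{n+1}$ and $\fF^a_{n+1}$ as canonical representatives of the same full homotopy pullback, so the universal morphism $\upsilon^{a_\bullet}_{n+1}:F^{a_1}_{n+1}\to\fF^a_{n+1}$ between them is a weak equivalence.

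\medskip

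Naturality of $\upsilon$ on a morphism $\Psi_\bullet:a_\bullet\to b_\bullet$ in $\ell({\tt M})$ will be verified by the same inductive scheme, propagating from the tautological compatibility at levels $0$ and $1$ via the uniqueness of the maps induced by functoriality of $(\za,\zb)$ and by the universal property of pullbacks. I expect the main obstacle to be the combinatorial bookkeeping: a morphism in $\ell({\tt M})$ is a sequence of commutative cubes, so showing that $\upsilon$ is itself a morphism of long homotopy fiber sequences amounts to verifying at each step that all faces of the several cubes relating the $E\circ R_1$ and $I$ constructions commute, which ultimately reduces to these two uniqueness principles.
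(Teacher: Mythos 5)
Your construction of $\zvw$ is complete and coincides with the paper's: the upper commutative square of diagram \eqref{Start3}, with naturality read off from functoriality of $R$ and of $(\za,\zb)$.

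Your object-level construction of $\upsilon^a_\bullet$ also follows the paper's path (identity at levels $0,1$; at the inductive step, compare the two fixed factorizations, observe that the functorially induced map $C_{F^{a_1}_n}\to C_{\fF^a_n}$ is a weak equivalence between acyclics, then conclude via Corollary~\ref{CorLur} -- or, more precisely, via Theorem~\ref{IndHoPullMod} -- that the universal morphism $F^{a_1}_{n+1}\dashrightarrow\fF^a_{n+1}$ is a weak equivalence). The acyclicity argument and the appeal to uniqueness of universal arrows for the cube faces are both sound.

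The genuine gap is that you do not actually prove naturality of $\upsilon$; you defer it to "the same inductive scheme" and characterize the remaining work as combinatorial bookkeeping. In the paper this is the longest and most delicate part of the proof, and the difficulty is structural rather than merely clerical. The construction of $\upsilon^a_\bullet$, of $\upsilon^b_\bullet$ and the verification of the naturality square \eqref{Goal2} must be carried out \emph{simultaneously} in a single interleaved induction, because the commutation condition for the $n$-th cube of \eqref{Goal2} involves the level-$(n+1)$ vertices: at the moment one has only constructed up to level $n$, one can have established only \emph{half} of the $n$-th commutation condition, and one must use exactly that half as the induction hypothesis. Concretely, the commutativity of the central square of the paper's parallelepiped \eqref{FFF} (the square relating $C_{F^{a_1}_n}$, $C_{F^{b_1}_n}$, $C_{\fF^a_n}$, $C_{\fF^b_n}$) is obtained by comparing the two diagonal maps $D_1=S_3\circ S_1$ and $D_2=S_4\circ S_2$ furnished by functoriality of the factorization, and showing they agree precisely because the "half-condition" at level $n$ holds. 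Your sketch invokes "uniqueness of maps induced by functoriality of $(\za,\zb)$ and by the universal property of pullbacks," which is the right raw material, but without formulating and propagating the half-commutation invariant the induction does not close. You should state the exact inductive hypothesis (mimicking the paper's "constructed up to order $n$, and the commutation condition of \eqref{Goal2} holds up to half the condition for the $n$-th cubes") and verify that the step from $n$ to $n+1$ both completes the $n$-th condition and establishes half of the $(n+1)$-th.

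A further, smaller omission: you construct $\upsilon^a_\bullet$ for a fixed $a_\bullet$, but never spell out that it is a morphism in $\ell({\tt M})$, i.e.\ that each level-$n$ datum is a \emph{commutative cube} including the implicit arrow $C_{F^{a_1}_n}\to C_{\fF^a_n}$ you constructed. This part is routine (uniqueness of the universal arrow closes the cube), but it should be said, because the very same cube faces are what enter the naturality verification above.
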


\begin{proof}
If we go back to the original notation in Diagram \eqref{Start2}, we have the commutative cube

\be\label{Start3}
\begin{tikzcd}[row sep=1.2em, column sep=1.5em]
A_1\ar[rrr,"a_1"]\ar[dd,"\psi_1"]\ar[dr,"\sim", tail] & & & A_0\ar[dd, near start, "\psi_0"]\ar[dr,"\sim", tail] &\\
& RA_1\arrow[r, "\za(Ra_1)"]\arrow[r,swap,"\sim"]\arrow[dd, swap, near start, "R\psi_1"]&F^{a_1}_1\arrow[rr,two heads, near start, "f^{a_1}_1"]\arrow[dd, near start,"\zf_1"]& &F^{a_1}_0\arrow[dd, "\zf_0"]\\
B_1\ar[rrr,"b_1"]\ar[dr,"\sim", tail] & & & B_0\ar[dr,"\sim", tail] &\\
& RB_1\arrow[r, "\za(Rb_1)"]\arrow[r,swap,"\sim"]&F^{b_1}_1\arrow[rr,two heads,"f^{b_1}_1"]& &F^{b_1}_0
\end{tikzcd}
\ee
for every $\tt M^\s$--morphism $\psi=(\psi_0,\psi_1):a_1\to b_1\,.$ The $\tt M^\s$--morphism $\zvw^{a_1}:a_1\to f^{a_1}_1$ is the upper commutative square which is a weak equivalence as it should be. The naturality of $\zvw$ precisely means that the total left square and the right square commute, which is the case. \medskip

Next we construct $\zu$ by proceeding similarly to the proof of Lemma \ref{FunI}. For every $a_\bullet\in\ell({\tt M})$ we must define an $\ell({\tt M})$--morphism $\zu^a_\bullet:f^{a_1}_\bullet\stackrel{\sim}{\to} \ff^a_\bullet\,,$ i.e., we have to define a sequence of commutative cubes which are objectwise weak equivalences. Moreover, for every $\ell({\tt M})$--morphism $\Psi_\bullet:a_\bullet\to b_\bullet\,,$ we must show that the diagram
\be
\begin{tikzcd}\label{Goal2}
f^{a_1}_\bullet\arrow[r,swap, "\zu^a_\bullet"]\ar[r, "\sim"]\arrow[d,"\Phi_\bullet"]& \ff^a_\bullet\arrow[d,"\zY_\bullet"]\\
f^{b_1}_\bullet\arrow[r, "\sim"] \ar[r, swap, "\zu^b_\bullet"]& \ff^b_\bullet
\end{tikzcd}
\ee
commutes (we have used the notations introduced above). Since we set $\ff^a_1=f^{a_1}_1$ (see proof of Lemma \ref{FunI}), we choose the identity maps as first two components $\zu^a=(\zu^a_0,\zu^a_1):f^{a_1}_1\to \ff^a_1$ of $\zu^a_\bullet\,.$ Since we also set $(\zy_0,\zy_1)=(\zf_0,\zf_1)\,,$ the naturality condition is so far fulfilled. Assume now that the sequences $\zu^a_\bullet,\zu^b_\bullet$ of commutative cubes which are objectwise weak equivalences have been constructed with their implicit arrows up to order $n\ge 1$ and that the commutation condition of \eqref{Goal2} is fulfilled up to half the condition for $n$th cubes.\medskip

To extend the sequences and the commutativity to order $n+1\,,$ we start by describing the total parallelepiped in the diagram
\be
\begin{tikzcd}[row sep=0.5em, column sep=0.5em]\label{FFF}
&0\ar[rr,tail,"\sim"]\ar[dd,very near end,"\sim"]\ar[ld,swap,"\sim"]&&C_{F^{a_1}_n}\ar[rr,two heads]\ar[dd,very near start,"S_2"]\ar[ld,swap,"S_1"]&&F^{a_1}_{n-1}\ar[dd,"\sim"]\ar[ld] \\
0\ar[rr,tail,very near end,"\sim"]\ar[dd,"\sim"]&&C_{F^{b_1}_n}\ar[rr,two heads]\ar[dd,swap,near end,"S_3"]&&F^{b_1}_{n-1}\ar[dd,very near start,"\sim"]& \\
&C_{A_n}\ar[rr,tail,very near start,"\sim"]\ar[ld]&&C_{\fF^a_n}\ar[rr,two heads]\ar[ld,near start,"S_4"]&&\fF^a_{n-1}\ar[ld] \\
C_{B_n}\ar[rr,tail,"\sim"]&&C_{\fF^b_n}\ar[from=uuur,swap]\ar[rr,two heads]&&\fF^b_{n-1}&
\end{tikzcd}
\ee
The arrow $F^{a_1}_{n-1}\to F^{b_1}_{n-1}$ is $\zf_{n-1}\,,$ the arrow $F^{a_1}_{n-1}\to \fF^a_{n-1}$ is $\zu^a_{n-1}\,,$ the arrow $C_{A_n}\to \fF^a_{n-1}$ is the composite of $C_{A_n}\to A_{n-1}$ in $a_\bullet$ and $\zw^a_{n-1}:A_{n-1}\to \fF^a_{n-1}$ (see Corollary \ref{NatWeq}), the arrow $C_{A_n}\to C_{B_n}$ is the implicit arrow in the $n$th cube of $\Psi_\bullet\,,$ and $\fF^a_{n-1}\to \fF^b_{n-1}$ is $\zy_{n-1}\,.$ All four rectangles of the parallelepiped commute (for the commutativity of the lower rectangle, see commutative diagram \eqref{TouDia}). By definition, we get the vertex $C_{F^{a_1}_n}$ (resp., $C_{\fF^a_n}$), if we apply the fixed functorial trivial cofibration - fibration factorization system (see Remark \ref{Fixed}) to $0\to F^{a_1}_{n-1}$ (resp., $C_{A_n}\to\fF^a_{n-1}$). Since the system is functorial, we also get the five central arrows, $S_1,S_2,S_3,S_4$ and the diagonal arrow. A priori we even get two diagonal arrows, one, say $D_1\,,$ induced by the commutative square, which is made up of the top and front rectangle, and one, $D_2\,,$ from the commutative square, which is made up of the back and bottom rectangle. From the functoriality of the factorization system follows that $D_1=S_3\circ S_1$ and $D_2=S_4\circ S_2\,.$ However, these two commutative squares coincide as the right square of \eqref{FFF} commutes (since half of the commutation condition is satisfied for the $n$th cubes that correspond to \eqref{Goal2}), so that $D_1=D_2\,,$ i.e., so that the central square of \eqref{FFF} commutes. \medskip

In the right cube of Diagram \eqref{TouDia2}
\be\label{TouDia2}
\begin{tikzcd}[row sep=0.25em, column sep=0.75em]
&&&&F^{a_1}_{n+1}\ar[ddllll]\arrow[dr,dashed] \arrow[rr,two heads] \arrow[dddd] && F^{a_1}_n \ar[ddllll]\arrow[dddd,two heads] \arrow[dr,"\sim"] \\
&&&&& \fF^a_{n+1}\ar[ddllll,dashed]\arrow[dddd]\arrow[rr,crossing over,two heads] &&  \fF^a_n \ar[ddllll,crossing over]\arrow[dddd,two heads] \\
F^{b_1}_{n+1}\arrow[dr,dashed] \arrow[rr,two heads] \arrow[dddd] && F^{b_1}_n \arrow[dddd,two heads] \arrow[dr,"\sim"] \\
& \fF^b_{n+1}\arrow[rr,crossing over,two heads] &&  \fF^b_n \arrow[dddd,two heads] \\
&&&&C_{F^{a_1}_n} \ar[ddllll]\arrow[rr,two heads] \arrow[dr,"\sim"] && F^{a_1}_{n-1} \ar[ddllll]\arrow[dr,"\sim"] \\
&&&&& C_{\fF^a_n} \ar[ddllll]\arrow[rr, two heads] && \fF^a_{n-1}\ar[ddllll]\\
C_{F^{b_1}_n} \arrow[rr,two heads] \arrow[dr,"\sim"] && F^{b_1}_{n-1} \arrow[dr,"\sim"] \\
& C_{\fF^b_n} \arrow[from=uuuu, crossing over]\arrow[rr, two heads] && \fF^b_{n-1}
\end{tikzcd}
\ee
the right commutative square is given by $\zu^a_n\,,$ $f^{a_1}_n\,,$ $\ff^a_n$ and $\zu^a_{n-1}\,.$ The lower commutative square is the right back square of Diagram \eqref{FFF}. The vertices $F^{a_1}_{n+1}$ and $\fF^a_{n+1}$ have been defined as pullbacks and the resulting commutative squares identified as homotopy fiber sequences. Since the back pullback square is a homotopy fiber sequence, the universal dashed arrow between these vertices is a weak equivalence. Hence the right commutative cube in \eqref{TouDia2} extends the sequence $\zu^a_\bullet$ to order $n+1\,.$ The left commutative cube is built similarly and it extends $\zu^b_\bullet$ to order $n+1\,$. The objectwise weak equivalence condition is satisfied.\medskip

We now describe the six cubes in Diagram \eqref{TouDia2} that contain sloping arrows to the left. In fact, the top and bottom of these cubes are fully described as soon as the two back and two front cubes are. The back cube $F^{a_1}\rightsquigarrow F^{b_1}$ (resp., front cube $\fF^a\rightsquigarrow \fF^b$) is part of $\Phi_\bullet$ (resp., of $\zY_\bullet$) and therefore it commutes. The front cube $F^{a_1}\fF^a\rightsquigarrow F^{b_1}\fF^b$ is commutative because of the induction assumption that the commutation condition of Diagram \eqref{Goal2} is satisfied up to half the condition for the $n$th cubes. Finally, the vertical faces of the back cube $F^{a_1}\fF^aC\rightsquigarrow F^{b_1}\fF^bC$ are commutative since the are faces of other commutative cubes. Its lower face is the central square of \eqref{FFF}, which commutes. Its upper face commutes because of the uniqueness of the universal arrow (see also \eqref{TouDia}). We still have to show that the complete commutation condition for the $n$th cubes is now fulfilled, that is, the composite cubes $F^{a_1}\rightsquigarrow\fF^a\rightsquigarrow\fF^b$ and $F^{a_1}\rightsquigarrow F^{b_1}\rightsquigarrow \fF^b$ in \eqref{TouDia2} coincide. Since the diagram \eqref{TouDia2} is fully commutative, this requirement is met.
\end{proof}

We remind the reader of the following lemma \cite{Models}, as it simplifies the proof of Theorem \ref{EquivHoCat}.

\begin{lem}\label{NatTraMHoM}
Let $\tt C$ be a category which is equipped with a distinguished family $W\!$ of morphisms called weak equivalences, let $\tt E$ be any category and let $\cF,\cG\in\tt Fun(C,E)$ be functors which send weak equivalences to isomorphisms. A family $\zy_X:\cF(X)\to \cG(X)$ of $\,\tt E$--maps indexed by the objects $X$ of $\,\tt C$ is a natural transformation $\op{Ho}(\zy):\op{Ho}(\cF)\Rightarrow \op{Ho}(\cG)$ if and only if it is a natural transformation $\zy:\cF\Rightarrow \cG\,.$
\end{lem}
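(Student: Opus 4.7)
The plan is to exploit the explicit zigzag description of $\tt Ho(C) = C[[W^{-1}]]$ recalled earlier in the paper, together with the fact that the localization functor $\gamma_{\tt C}:\tt C\to Ho(C)$ is the identity on objects, so that a family $\{\zy_X\}_{X\in\op{Ob}\tt C}$ is literally the same data as a family indexed by the objects of $\tt Ho(C)$. The two induced factorizations $\op{Ho}(\cF)$ and $\op{Ho}(\cG)$ agree with $\cF,\cG$ on objects, so the candidate components $\zy_X$ match on both sides.

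First, I would dispatch the easy direction: if $\op{Ho}(\zy):\op{Ho}(\cF)\Rightarrow \op{Ho}(\cG)$ is natural, then for any $\tt C$--morphism $f:X\to Y$ the naturality square with respect to $\gamma_{\tt C}(f)$ reads
\[
\op{Ho}(\cG)(\gamma_{\tt C}(f))\circ \zy_X = \zy_Y\circ \op{Ho}(\cF)(\gamma_{\tt C}(f))\,,
\]
and since $\op{Ho}(\cF)\circ\gamma_{\tt C}=\cF$ and $\op{Ho}(\cG)\circ\gamma_{\tt C}=\cG$, this is exactly the naturality of $\zy:\cF\Rightarrow \cG$.

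For the converse, assume $\zy:\cF\Rightarrow\cG$ is natural and let $\zf:X\to Y$ be a $\tt Ho(C)$--morphism. By the free-inversion construction recalled in Section \ref{LoHoFibSeq}, $\zf$ is a finite composite of morphisms of the two generating types: images $\gamma_{\tt C}(f)$ of $\tt C$--morphisms, and formal inverses $\gamma_{\tt C}(w)^{-1}$ of morphisms $w\in W$. Because naturality squares compose (if $\zy$ is natural against $\zf_1$ and $\zf_2$ it is natural against $\zf_2\circ\zf_1$), it suffices to verify the naturality square on each of these two kinds of generators. On generators of the first kind, the square coincides with the given naturality of $\zy$ against $f$. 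On a generator $\gamma_{\tt C}(w)^{-1}:Y\to X$ with $w:X\to Y$ in $W$, we use that $\cF(w)$ and $\cG(w)$ are already isomorphisms in $\tt E$ (by hypothesis on $\cF,\cG$), so the naturality square of $\zy$ against $w$,
\[
\cG(w)\circ \zy_X=\zy_Y\circ \cF(w)\,,
\]
can be inverted to give
\[
\cG(w)^{-1}\circ \zy_Y=\zy_X\circ \cF(w)^{-1}\,,
\]
which is the required naturality against $\gamma_{\tt C}(w)^{-1}$. Composing along the zigzag decomposition of $\zf$ then yields naturality of $\op{Ho}(\zy)$ against all $\tt Ho(C)$--morphisms.

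The only potential subtlety is purely bookkeeping: making sure that the generators-and-relations description of $\tt Ho(C)$ really lets one reduce naturality to the two cases above. This is handled by the universal property of the strong localization, and there is no genuine obstacle; the argument is essentially a one-step inversion trick applied to each weak equivalence generator.
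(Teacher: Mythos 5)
Your argument is correct. Note that the paper itself does not include a proof of this lemma: it is stated as a reminder and cited from the companion paper \cite{Models}, so there is no in-text proof to compare against. That said, your proof is the natural (and almost certainly the intended) one: the forward direction follows immediately from $\op{Ho}(\cF)\circ\gamma_{\tt C}=\cF$ and $\op{Ho}(\cG)\circ\gamma_{\tt C}=\cG$ together with $\gamma_{\tt C}$ being identity on objects; the converse uses that every $\tt Ho(C)$--morphism is a finite composite of $\gamma_{\tt C}(f)$'s and $\gamma_{\tt C}(w)^{-1}$'s, that naturality squares compose, and that the square against $\gamma_{\tt C}(w)^{-1}$ is obtained by inverting the square against $w$ (legitimate since $\cF(w)$ and $\cG(w)$ are isomorphisms in $\tt E$ by hypothesis). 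One small point worth making explicit: because naturality is a \emph{condition} to be checked rather than data to be constructed, you do not need to worry about the relations imposed on zigzags in $C[[W^{-1}]]$ or about well-definedness across different zigzag representatives of the same $\tt Ho(C)$--morphism; closure under composition together with verification on generators suffices.
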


\begin{lem}\label{DescWeq}
Let $\tt C,D$ be categories with distinguished families $W_{\tt C},W_{\tt D}$ of morphisms and let $F,G\in\tt Fun(C,D)$ be functors which preserve these weak equivalences. A natural weak equivalence $\zh:F\stackrel{\sim}{\Rightarrow}G$ induces a natural isomorphism $$\op{Ho}(\zh):\op{Ho}(F)\stackrel{\cong}{\Rightarrow}\op{Ho(G)}\,.$$
\end{lem}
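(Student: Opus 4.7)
The plan is to reduce the claim to the already-proved Lemma \ref{NatTraMHoM} by post-composing everything with the localization functor $\zg_{\tt D}:\tt D\to Ho(D)$. First I would note that since $F,G\in\tt Fun(C,D)$ preserve weak equivalences and $\zg_{\tt D}$ sends weak equivalences of $\tt D$ to isomorphisms, the composite functors $\cF:=\zg_{\tt D}\circ F$ and $\cG:=\zg_{\tt D}\circ G$ both belong to $\tt Fun(C,Ho(D))$ and send $W_{\tt C}$ to isomorphisms. They therefore factor uniquely through $\zg_{\tt C}$, giving the induced functors $\op{Ho}(F),\op{Ho}(G):\op{Ho}(\tt C)\to\op{Ho}(\tt D)$ with $\op{Ho}(F)\circ\zg_{\tt C}=\cF$ and $\op{Ho}(G)\circ\zg_{\tt C}=\cG$.

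Next I would consider, for each object $X\in\tt C$, the morphism $\zy_X:=\zg_{\tt D}(\zh_X):\cF(X)\to\cG(X)$ in $\op{Ho}(\tt D)$. Because $\zh_X$ is a weak equivalence of $\tt D$, its image $\zy_X$ is an isomorphism in $\op{Ho}(\tt D)$. Moreover, the family $\{\zy_X\}_{X\in{\tt C}}$ is a natural transformation $\zy:\cF\Rightarrow\cG$: for every $\tt C$--morphism $u:X\to Y$, functoriality of $\zg_{\tt D}$ applied to the naturality square of $\zh$ at $u$ yields the corresponding naturality square for $\zy$.

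Applying Lemma \ref{NatTraMHoM} to $\cF,\cG:\tt C\to Ho(D)$ and the family $\zy$, I obtain a natural transformation $\op{Ho}(\zy):\op{Ho}(\cF)\Rightarrow\op{Ho}(\cG)$, i.e.\ a natural transformation $\op{Ho}(\zh):\op{Ho}(F)\Rightarrow\op{Ho}(G)$ between the two induced functors on $\op{Ho}(\tt C)$, whose component at $X$ (viewed now as an object of $\op{Ho}(\tt C)$) is again $\zg_{\tt D}(\zh_X)$. Since each such component is an isomorphism in $\op{Ho}(\tt D)$, the natural transformation $\op{Ho}(\zh)$ is a natural isomorphism, which is the desired conclusion.

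There is no real obstacle here; the only point that requires a moment of care is making sure that the family $\zy_X$ is indexed by objects of the \emph{source} $\tt C$ (not of $\op{Ho}(\tt C)$) when invoking Lemma \ref{NatTraMHoM}, and that the factorizations $\op{Ho}(F),\op{Ho}(G)$ coincide with $\op{Ho}(\cF),\op{Ho}(\cG)$ as functors $\op{Ho}(\tt C)\to\op{Ho}(\tt D)$, which is guaranteed by the uniqueness clause in the universal property of the Gabriel--Zisman localization $\zg_{\tt C}$.
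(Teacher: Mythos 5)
Your proposal is correct and follows essentially the same route as the paper: both whisker $\zh$ with $\zg_{\tt D}$ to obtain the objectwise-isomorphism family $\zg_{\tt D}(\zh_X)$ between $\zg_{\tt D}\circ F$ and $\zg_{\tt D}\circ G$, then invoke Lemma \ref{NatTraMHoM} to descend it to a natural isomorphism $\op{Ho}(F)\Rightarrow\op{Ho}(G)$. You merely spell out a few of the implicit details (naturality of the whiskered family, the factorization through $\zg_{\tt C}$) that the paper takes for granted.
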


\begin{proof}[Proof of Lemma \ref{DescWeq}] If we whisker the natural transformation $\zh$ with the localization functor $\zg_{\tt D}$ we get the natural transformation $$\zg_{\tt D}\star\zh:\zg_{\tt D}\circ F\Rightarrow \zg_{\tt D}\circ G\;,$$ all whose components $(\zg_{\tt D}\star\zh)_X=\zg_{\tt D}(\zh_X)$ are isomorphisms. If we apply Lemma \ref{NatTraMHoM} to $\cF=\zg_{\tt D}\circ F\,,$ $\cG=\zg_{\tt D}\circ G$ and the family $(\zg_{\tt D}\star\zh)_X\,,$ and if we write as usual $\op{Ho}(-)$ instead of $\op{Ho}(\zg_{\tt D}\circ -)\,,$ we get the announced result.
\end{proof}

\begin{proof}[Proof of Theorem \ref{EquivHoCat}]
If we apply Lemma \ref{DescWeq} to the natural weak equivalence $$\zw:\id_{\ell(M)}\stackrel{\sim}\Rightarrow I$$ of Corollary \ref{NatWeq}, we get a natural isomorphism \be\label{NatIso1}\op{Ho}(\zw):\id_{{\tt Ho}(\ell({\tt M}))}\stackrel{\cong}{\Rightarrow}\op{Ho}(I)\;,\ee since $\op{Ho}(\id_{\ell({\tt M})})=\id_{{\tt Ho}(\ell({\tt M}))}\,.$\medskip

Analogously, the natural weak equivalences $$\zvw:\id_{\tt M^\s}\stackrel{\sim}{\Rightarrow}R_1\circ E\quad\text{and}\quad\zu:E\circ R_1\stackrel{\sim}{\Rightarrow}I\;$$ of Lemma \ref{NatWeqs} induce natural isomorphisms \be\label{NatIso2}\op{Ho}(\zvw):\id_{{\tt Ho}({\tt M}^\s)}\stackrel{\cong}{\Rightarrow}\op{Ho}(R_1)\circ\op{Ho}(E)\quad\text{and}\quad\op{Ho}(\zu):\op{Ho}(E)\circ\op{Ho}(R_1)\stackrel{\cong}{\Rightarrow}\op{Ho}(I)\;.\ee Indeed, as $\op{Ho}(R_1\circ E)$ is the unique endofunctor of ${\tt Ho}({\tt M}^\s)$ such that $$\op{Ho}(R_1\circ E)\circ\zg_{{\tt M}^\s}=\zg_{{\tt M}^\s}\circ R_1\circ E\;,$$ and as $\op{Ho(R_1)}$ (resp., $\op{Ho(E)}$) is the unique functor from ${\tt Ho}(\ell({\tt M}))$ to ${\tt Ho}({\tt M}^\s)$ (resp., from ${\tt Ho}({\tt M}^\s)$ to ${\tt Ho}(\ell({\tt M}))$) such that $$\op{Ho}(R_1)\circ\zg_{\ell({\tt M})}=\zg_{{\tt M}^\s}\circ R_1\quad (\,\text{resp., such that}\;\op{Ho}(E)\circ \zg_{{\tt M}^\s}=\zg_{\ell({\tt M})}\circ E)\;,$$ we have that $$\op{Ho}(R_1\circ E)=\op{Ho}(R_1)\circ\op{Ho}(E)$$ and similarly for $\op{Ho}(E\circ R_1)\,.$\medskip

If we combine \eqref{NatIso1} and \eqref{NatIso2} we see that $$\op{Ho}(R_1):{\tt Ho}(\ell({\tt M}))\rightleftarrows{\tt Ho}({\tt M}^\s):\op{Ho}(E)$$ is an equivalence of categories.
\end{proof}

As Theorem \ref{EquivHoCat} has now been proven, the map $$\op{Ho}(R_1)_{a_\bullet b_\bullet}:\op{Hom}_{{\tt Ho(\ell(M))}}(a_\bullet, b_\bullet)\to \op{Hom}_{\tt Ho(M^\s)}(a_1,b_1)\quad (a_\bullet,b_\bullet\in\ell({\tt M}))$$ in Equation \eqref{FullFaith} {\it is} a bijection. Its inverse can be described explicitly using the functor $E$ of Lemma \ref{FunE} and the natural weak equivalences $\zw$ and $\zu$ of Corollary \ref{NatWeq} and Lemma \ref{NatWeqs}:

\begin{prop}\label{Explicit}
Let $\tt M$ be a pointed model category, let $a_\bullet,b_\bullet\in\ell({\tt M})\,,$ let $a_1:=R_1(a_\bullet)\,,$ $b_1:=R_1(b_\bullet)$ and let $$\xi\in\op{Hom}_{\tt Ho(M^\s)}(a_1,b_1)\;.$$ The unique preimage $$\Xi_\bullet\in\op{Hom}_{{\tt Ho(\ell(M))}}(a_\bullet, b_\bullet)$$ of $\,\xi$ under the bijection $\op{Ho}(R_1)_{a_\bullet b_\bullet}$ is the composite of $\,\tt Ho(\ell(M))$--morphisms

\be\label{Claim1}
\begin{tikzcd}
\\
a_\bullet\arrow[r,"\zg(\zw^a_\bullet)"]\arrow[r,swap,"\cong"]& \ff^a_\bullet\ar[r,"\zg(\zu^a_\bullet)^{-1}"]\ar[r,swap,"\cong"] & f^{a_1}_\bullet \ar[r,"\op{Ho}(E)(\xi)"]& f^{b_1}_\bullet\ar[r,"\zg(\zu^b_\bullet)"]\ar[r,swap,"\cong"]&\ff^b_\bullet\ar[r,"\zg(\zw^b_\bullet)^{-1}"]\ar[r,swap,"\cong"]&b_\bullet\;,\\
\end{tikzcd}
\ee
where $\zg=\zg_{\ell({\tt M})}\,.$ If $\xi$ is the class of weak equivalences and formal reversals of weak equivalences, then $\op{Ho}(E)(\xi)$ is an isomorphism and so is $\Xi_\bullet\,$. In this case, we refer to $\Xi_\bullet$ as the {\bf canonical isomorphism} in the homotopy category that extends $\xi\,.$
\end{prop}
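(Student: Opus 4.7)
The strategy is straightforward: apply $\op{Ho}(R_1)$ to the composite on the right-hand side of \eqref{Claim1} and show that it returns $\xi\,.$ Since Theorem \ref{EquivHoCat} guarantees that $\op{Ho}(R_1)_{a_\bullet b_\bullet}$ is a bijection, this will force the composite to be the unique preimage $\Xi_\bullet\,.$

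First I would use functoriality of $\op{Ho}(R_1)$ together with the identity $\op{Ho}(R_1)\circ\op{Ho}(E)=\op{Ho}(R_1\circ E)\,,$ which is already justified in the proof of Theorem \ref{EquivHoCat}. This distributes $\op{Ho}(R_1)$ over the five factors in \eqref{Claim1}. The key bookkeeping step, which I expect to be the main technical point, is to identify the four outer restrictions: inspection of the inductive construction in the proof of Lemma \ref{FunI} shows that the first two components of $\zw^a_\bullet$ coincide exactly with $\zvw^{a_1}$ as built in Diagram \eqref{Start2} of the proof of Lemma \ref{NatWeqs}, so $R_1(\zw^a_\bullet)=\zvw^{a_1}\,,$ and analogously $R_1(\zw^b_\bullet)=\zvw^{b_1}\,.$ Moreover, the first two components of $\zu^a_\bullet$ (resp. $\zu^b_\bullet$) are declared to be identity maps in the proof of Lemma \ref{NatWeqs}, so $R_1(\zu^a_\bullet)=\id_{f^{a_1}_1}$ and $R_1(\zu^b_\bullet)=\id_{f^{b_1}_1}\,.$ Substituting, the two central identity factors collapse and one is left with
$$\op{Ho}(R_1)(\Xi_\bullet)=\op{Ho}(\zvw)_{b_1}^{-1}\circ\op{Ho}(R_1\circ E)(\xi)\circ\op{Ho}(\zvw)_{a_1}\;.$$
The naturality square of the natural isomorphism $\op{Ho}(\zvw):\id_{\tt Ho(M^\s)}\stackrel{\cong}{\Rightarrow}\op{Ho}(R_1\circ E)$ at $\xi$ gives $\op{Ho}(R_1\circ E)(\xi)\circ\op{Ho}(\zvw)_{a_1}=\op{Ho}(\zvw)_{b_1}\circ\xi\,,$ and substituting collapses the right-hand side to $\xi\,,$ as required.

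For the final assertion, Lemma \ref{FunE} states that $E$ preserves weak equivalences, so $\op{Ho}(E)$ sends $\zg_{\tt M^\s}$--images of weak equivalences and their formal reversals in the zigzag localization to isomorphisms of $\tt Ho(\ell(M))\,.$ Whenever $\xi$ is such a zigzag class, $\op{Ho}(E)(\xi)$ is therefore an isomorphism; since the four outer arrows in \eqref{Claim1} are already isomorphisms by Corollary \ref{NatWeq} and Lemma \ref{NatWeqs}, the full composite $\Xi_\bullet$ is an isomorphism, which justifies calling it the canonical isomorphism extending $\xi\,.$
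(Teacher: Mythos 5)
Your proof is correct, and it takes a genuinely different route from the paper's. The paper argues directly: it writes the unique preimage $\Xi_\bullet$ as a class of zigzags in $\ell({\tt M})$, stacks the naturality squares of $\zw$ and $\zu$ for each arrow of the zigzag into the amalgamation diagram \eqref{ComAmaSqu}, applies $\zg_{\ell({\tt M})}$, and then carefully identifies the two composites around the boundary with $\Xi_\bullet$ and with the claimed composite \eqref{Claim1} respectively. You instead invert the problem: you verify that the composite \eqref{Claim1} maps to $\xi$ under $\op{Ho}(R_1)$ and invoke injectivity of $\op{Ho}(R_1)_{a_\bullet b_\bullet}$, already established in Theorem \ref{EquivHoCat}. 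The bookkeeping then reduces to two identifications extracted from the constructions in Lemmas \ref{FunI} and \ref{NatWeqs} --- namely $R_1(\zw^a_\bullet)=\zvw^{a_1}$ and $R_1(\zu^a_\bullet)=\id_{f^{a_1}_1}$, both of which do hold by the explicit choices made in those proofs --- and then to the naturality of $\op{Ho}(\zvw):\id\Rightarrow\op{Ho}(R_1)\circ\op{Ho}(E)$ (valid for \emph{all} $\tt Ho(M^\s)$--morphisms, including $\xi$, by Lemma \ref{NatTraMHoM}). This is shorter and more conceptual: it avoids unpacking the zigzag representation of $\Xi_\bullet$ and keeps the argument at the level of the homotopy categories. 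What it gives up is the explicit picture \eqref{ComAmaSqu}, which the paper has occasion to reuse elsewhere. One small point on your closing paragraph: once $\xi$ is a zigzag of weak equivalences and formal reversals, it is already an isomorphism in $\tt Ho(M^\s)$, so $\op{Ho}(E)(\xi)$ is an isomorphism simply because functors preserve isomorphisms; the fact that $E$ preserves weak equivalences is what makes $\op{Ho}(E)$ exist in the first place, not what makes $\op{Ho}(E)(\xi)$ invertible. The conclusion you draw is correct either way.
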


\begin{proof}
The unique preimage $\Xi_\bullet$ is a class $$\Xi_\bullet=[a_\bullet \stackrel{\Psi_\bullet}{\longrightarrow} c_\bullet \stackrel{\zW_\bullet^{-1}}{\longsquiggly} d_\bullet \stackrel{\Psi'_\bullet}{\longrightarrow}\cdots \stackrel{\zW_\bullet'^{-1}}{\longsquiggly} b_\bullet]\;$$ of morphisms $\to$ of $\ell({\tt M})$ and formal reversals $\rightsquigarrow$ of weak equivalences $\stackrel{\sim}{\leftarrow}$ of $\ell({\tt M})\,.$ If we construct for each one of these $\ell({\tt M})$--morphisms $\Psi_\bullet,\zW_\bullet,\Psi'_\bullet,\cdots,\zW'_\bullet$ the commutative $\ell({\tt M})$--squares \eqref{Goal} and \eqref{Goal2} which encode the naturality of $\zw$ and $\zu\,,$ we get the following amalgamation of commutative $\ell({\tt M})$--squares
\be\label{ComAmaSqu}
\begin{tikzcd}
a_\bullet\arrow[d,"\sim"]\arrow[d,swap,"\zw^a_\bullet"]\arrow[r,"\Psi_\bullet"]& c_\bullet\arrow[d,"\sim"]\ar[d,swap,"\zw^c_\bullet"]& d_\bullet\arrow[l,"\sim"]\arrow[l,swap,"\zW_\bullet"]\arrow[r,"\Psi'_\bullet"]
\arrow[d,"\sim"]\ar[d,swap,"\zw^d_\bullet"]&\cdots&\arrow[l,"\sim"]\arrow[l,swap,"\zW'_\bullet"]b_\bullet\arrow[d,"\sim"]\arrow[d,swap,"\zw^b_\bullet"]\\
\ff^a_\bullet\arrow[r,"I(\Psi_\bullet)"]&\ff^c_\bullet&\ff^d_\bullet\arrow[l,"\sim"]\ar[l,swap,"I(\zW_\bullet)"]\arrow[r,"I(\Psi'_\bullet)"]&\cdots& \ff^b_\bullet\arrow[l,"\sim"]\ar[l,swap,"I(\zW'_\bullet)"]\\
\arrow[u,swap,"\sim"] f^{a_1}_\bullet\arrow[r,swap,"\cI(\Psi_{\bullet})"]\ar[u,"\zu^a_\bullet"]&\arrow[u,swap,"\sim"]\ar[u,"\zu^c_\bullet"]f^{c_1}_\bullet&\arrow[u,swap,"\sim"]
\ar[u,"\zu^d_\bullet"]f^{d_1}_\bullet \arrow[l,swap,"\sim"]\ar[l,"\cI(\zW_{\bullet})"]\arrow[r,swap,"\cI(\Psi'_{\bullet})"]&\cdots&\arrow[u,swap,"\sim"]
\ar[u,"\zu^b_\bullet"] f^{b_1}_\bullet\arrow[l,swap,"\sim"]\ar[l,"\cI(\zW'_{\bullet})"]\\
\end{tikzcd}
\ee
where $\cI:=E\circ R_1\,.$ If we apply $\zg=\zg_{\ell({\tt M})}$ to \eqref{ComAmaSqu}, we get a commutative $\tt Ho(\ell(M))$--diagram in which the images of the weak equivalences are isomorphisms. It is straightforwardly seen that the composite \be\label{Comp1}\zg(\zW'_\bullet)^{-1}\circ\cdots\circ\zg(\Psi'_{\bullet})\circ\zg(\zW_\bullet)^{-1}\circ\zg(\Psi_\bullet)\ee of the first row is equal to the composite \be\label{Comp2}\zg(\zw^b_\bullet)^{-1}\circ\zg(\zu^b_\bullet)\circ\zg(\cI(\zW'_\bullet))^{-1}\circ\cdots\circ\zg(\cI(\Psi'_\bullet))\circ\zg(\cI(\zW_\bullet))^{-1}\circ
\zg(\cI(\Psi_\bullet))\circ\zg(\zu^a_\bullet)^{-1}\circ\zg(\zw^a_\bullet)\;\ee of the first column, the last row and the last column. Since for any morphism $\Phi_\bullet$ of $\ell({\tt M})$ we have $\zg(\Phi_\bullet)=[\Phi_\bb]\,,$ since for any weak equivalence $W_\bb$ of $\ell({\tt M})$ we have $[W_\bb]^{-1}=[W_\bb^{-1}]$ and since a composite of classes is the class of the corresponding concatenation, the composite \eqref{Comp1} can be written \be\label{Comp3}[\zW'_\bullet]^{-1}\circ\cdots\circ [\Psi'_\bullet]\circ[\zW_\bullet]^{-1}\circ[\Psi_\bullet]=[\zW'^{-1}_\bullet]\circ\cdots\circ [\Psi'_\bullet]\circ[\zW_\bullet^{-1}]\circ[\Psi_\bullet]=[\Psi_\bb\,\zW_\bb^{-1}\,\Psi'_\bb\,\cdots\,\zW'^{-1}_\bb]=\Xi_\bullet\;.\ee On the other hand, since for any morphism $\Phi_\bb$ any weak equivalence $W_\bb$ we have $$\op{Ho}(\cI)[\Phi_\bb]=\zg(\cI(\Phi_\bb))\quad\text{and}\quad\op{Ho}(\cI)[W_\bb^{-1}]=\zg(\cI(W_\bb))^{-1}\;,$$ the partial composite in \eqref{Comp2} of the factors that contain $\cI=E\circ R_1$ is equal to  \be\label{Comp4}\op{Ho}(\cI)[\zW'^{-1}_\bb]\circ\cdots\circ\op{Ho}(\cI)[\Psi'_\bb]\circ\op{Ho}(\cI)[\zW_\bullet^{-1}]\circ\op{Ho}(\cI)[\Psi_\bb]=\op{Ho}(E)(\op{Ho}(R_1)(\Xi_\bullet))=
\op{Ho}(E)(\xi)\;.\ee The statement \eqref{Claim1} now follows from \eqref{Comp1}, \eqref{Comp2}, \eqref{Comp3} and \eqref{Comp4}.\medskip

As for the second statement in Proposition \ref{Explicit}, it suffices to observe that if $$\xi=[w\,\zw^{-1}\,w'\,\cdots\,\zw'^{-1}]\,,$$ then $$\op{Ho}(E)(\xi)=\op{Ho}(E)[\zw'^{-1}]\circ\cdots\circ\op{Ho}(E)[w']\circ\op{Ho}(E)[\zw^{-1}]\circ\op{Ho}(E)[w]=$$ $$\zg(E(\zw'))^{-1}\circ\cdots\circ\zg(E(w'))\circ\zg(E(\zw))^{-1}\circ\zg(E(w))\;,$$ which is a composite of isomorphisms, as $E$ preserves weak equivalences.
\end{proof}

Theorem \ref{EquivHoCat} has two more corollaries that we will apply later.

\begin{cor}\label{EquivHoCatSingle}
Let $\tt M$ be a pointed model category. The localization $$\op{Ho}(R_1):{\tt Ho(h(M))\to Ho(M^\s)}$$ of the restriction functor of homotopy fiber sequences yields an equivalence of categories between the homotopy category of the category of homotopy fiber sequences of $\,\tt M$ and the homotopy category of the category of morphisms of $\,\tt M\,.$
\end{cor}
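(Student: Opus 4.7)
The plan is to imitate the proof of Theorem \ref{EquivHoCat} verbatim, truncating every construction to the level of a single homotopy fiber sequence. First I would introduce the tautological truncation functor $U:\ell({\tt M})\to{\tt h(M)}$ that sends a long homotopy fiber sequence $(A_\bullet,a_\bullet)$ to the single homotopy fiber sequence $A_2\stackrel{a_2}{\to}A_1\stackrel{a_1}{\to}A_0$ together with its implicit vertex $C_{A_1}\,,$ and a morphism $\Phi_\bullet$ to its first component cube $(\zf_0,\zf_1,\zf_2,\zvf_1)\,.$ The functor $U$ preserves objectwise weak equivalences and satisfies $R_1\circ U=R_1\,,$ where on the left $R_1$ is the restriction functor on $\ell({\tt M})$ and on the right it is the analogous restriction on ${\tt h(M)}\,.$ I would then set $E_h:=U\circ E:{\tt M}^\s\to{\tt h(M)}\,,$ which preserves weak equivalences by Lemma \ref{FunE} and sends each morphism to a homotopy fiber sequence made of fibrations between fibrant objects. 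Since $R_1\circ E_h=R_1\circ E\,,$ the natural weak equivalence $\zvw$ of Lemma \ref{NatWeqs} serves directly as $\zvw_h:\id_{{\tt M}^\s}\stackrel{\sim}{\Rightarrow}R_1\circ E_h\,.$

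Next I would construct a functor $I_h:{\tt h(M)}\to{\tt h(M)}$ and natural weak equivalences $\zw_h:\id_{{\tt h(M)}}\stackrel{\sim}{\Rightarrow}I_h$ and $\zu_h:E_h\circ R_1\stackrel{\sim}{\Rightarrow}I_h$ by literally running the inductive constructions of Lemmas \ref{FunI} and \ref{NatWeqs} at the base step $n=1$ only. Inspection of those proofs shows that the order-$1$ data consists precisely of one homotopy fiber sequence together with the first functorial factorization and the first pullback cube; nothing coming from later orders feeds back into the construction of $\ff^a_1,\ff^a_2\,,$ of the first cube of $\zw^a_\bullet\,,$ or of the first cube of $\zu^a_\bullet\,.$ The commutation diagrams \eqref{Start2}, the right cube of \eqref{TouDia} and the right cube of \eqref{TouDia2} are thus exactly what is required for naturality at the level of ${\tt h(M)}\,.$

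Finally, Lemma \ref{DescWeq} converts $\zw_h,\zvw_h,\zu_h$ into natural isomorphisms $\op{Ho}(\zw_h),\op{Ho}(\zvw_h),\op{Ho}(\zu_h)$ in the relevant homotopy categories, and combining them exactly as in the last paragraph of the proof of Theorem \ref{EquivHoCat} exhibits
$$\op{Ho}(R_1):{\tt Ho(h(M))}\rightleftarrows{\tt Ho(M}^\s):\op{Ho}(E_h)$$
as mutually quasi-inverse equivalences. The only step requiring care -- and not a genuine obstacle -- is confirming that the order-$n=1$ portions of the inductive arguments of Lemmas \ref{FunI} and \ref{NatWeqs} are self-contained; this is clear from the layout of those proofs, in which the order-$(n+1)$ stage consumes order-$n$ data but never feeds information back.
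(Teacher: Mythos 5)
Your proposal is correct and takes essentially the same approach as the paper: the paper's entire proof is the single sentence that the corollary follows by ``stopping the iterative process after the first step'' of the constructions in Lemmas \ref{FunE}, \ref{FunI} and \ref{NatWeqs}. Your unpacking of this — verifying that the base case plus one inductive step consume exactly the data of a single homotopy fiber sequence and that no later-order information feeds back — is precisely the content the paper leaves implicit; introducing the truncation functor $U$ and setting $E_h:=U\circ E$ rather than constructing $E_h$ directly by truncating the induction is a cosmetic rephrasing of the same idea.
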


\begin{proof}
Corollary \ref{EquivHoCatSingle} is a consequence of the proofs of Lemma \ref{FunE}, Lemma \ref{FunI} and Lemma \ref{NatWeqs} in which we stop the iterative process after the first step.
\end{proof}

\begin{cor}\label{PresHoFibSeq}
Let $\tt M$ and $\tt N$ be pointed model categories and let $F\in\tt Fun(M,N)$ be a functor which preserves weak equivalences between fibrant objects and sends the zero object $0_{\tt M}$ of $\,\tt M$ to an acyclic object $F(0_{\tt M})$ of $\,\tt N\,$. In addition, let $$a: A_2\xrightarrow{a_2} A_1 \xrightarrow{a_1} A_0\quad\text{and}\quad b: B_2\xrightarrow{b_2} B_1 \xrightarrow{b_1} B_0$$ be objectwise fibrant homotopy fiber sequences of $\tt h(M)$ and assume that there exists a weak equivalence $$w=(w_0,w_1):a_1 \to b_1$$ in $\tt M^\s$. Then $F$ preserves the homotopy fiber sequence $a\in\tt h(M)\,,$ i.e., we have $F(a)\in\tt h(N)$ if and only if it preserves the homotopy fiber sequence $b\in{\tt h(M)}\,,$ i.e., if and only if we have $F(b)\in\tt h(N)\,.$ If $F$ preserves all weak equivalences, the result is true without $a$ and $b$ being objectwise fibrant.
\end{cor}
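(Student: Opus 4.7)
The plan is to reduce the statement to the equivalence of categories furnished by Corollary \ref{EquivHoCatSingle}. Because $w:a_1\to b_1$ is a weak equivalence in $\tt M^\s$, the single-sequence analog of the explicit inverse described in Proposition \ref{Explicit} provides a zigzag
\[
a\stackrel{\zw^a}{\longrightarrow}\ff^a\stackrel{\zu^a}{\longleftarrow}f^{a_1}\stackrel{E(w)}{\longrightarrow}f^{b_1}\stackrel{\zu^b}{\longrightarrow}\ff^b\stackrel{\zw^b}{\longleftarrow}b
\]
of objectwise weak equivalences of $\tt h(M)$ connecting $a$ to $b$, in which the four intermediate homotopy fiber sequences $\ff^a,\,f^{a_1},\,f^{b_1},\,\ff^b$ are \emph{objectwise fibrant} by construction: they are assembled from the chosen functorial trivial cofibration - fibration factorization system as fibrations between fibrant objects whose implicit pullback vertices are again fibrant.

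Next I apply $F$ termwise. Since the outer terms $a,b$ are objectwise fibrant by hypothesis and the inner ones by construction, every arrow of the zigzag is an objectwise weak equivalence between objectwise fibrant squares, so the assumption that $F$ preserves weak equivalences between fibrant objects transports the zigzag to a zigzag of objectwise weak equivalences of commutative squares of $\tt N$ connecting $F(a)$ to $F(b)$. Applying Proposition \ref{ComPara} iteratively along this zigzag then transports the model-square property: $F(a)$ is a model square of $\tt N$ if and only if $F(b)$ is.

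It remains to see that the implicit vertices of $F(a)$ and $F(b)$ are acyclic in $\tt N$. If $C_A$ denotes the implicit vertex of $a$, then $C_A$ and $0_{\tt M}$ are both fibrant, so the acyclicity weak equivalence $0_{\tt M}\to C_A$ is sent by $F$ to a weak equivalence $F(0_{\tt M})\to F(C_A)$; combined with the weak equivalence $0_{\tt N}\to F(0_{\tt M})$ from the assumed acyclicity of $F(0_{\tt M})$, the 2-out-of-3 axiom yields the acyclicity of $F(C_A)$, and the same argument applies to $F(C_B)$. This closes the equivalence $F(a)\in\tt h(N)\Leftrightarrow F(b)\in\tt h(N)$. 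For the final clause, if $F$ preserves \emph{all} weak equivalences, the identical zigzag still exists and is preserved, and the acyclicity of the implicit vertices follows without appealing to fibrancy; the objectwise fibrancy assumption on $a$ and $b$ becomes superfluous. The only (mild) obstacle is the bookkeeping required to ensure that every object along the connecting zigzag lies in the fibrant regime where the hypothesis on $F$ applies, which is guaranteed by the explicit constructions of the functors $E$ and $I$ underlying Proposition \ref{Explicit}.
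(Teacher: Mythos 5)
Your proof is correct and follows essentially the same route as the paper's: reduce via Corollary \ref{EquivHoCatSingle} and the explicit zigzag from (the $\tt h(M)$ version of) Proposition \ref{Explicit} to a chain of objectwise weak equivalences between objectwise fibrant homotopy fiber sequences, transport the model-square property along the zigzag using Proposition \ref{ComPara}, and handle acyclicity of the implicit vertices via $0_{\tt N}\stackrel{\sim}{\to}F(0_{\tt M})\stackrel{\sim}{\to}F(C)$. Your slight reorganization (iterating Proposition \ref{ComPara} across the whole zigzag and then verifying acyclicity of $F(C_A),F(C_B)$ separately, rather than reducing to a single weak equivalence as the paper does) is equally valid, and your closing observation about which steps use fibrancy matches the paper's remark.
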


\begin{proof}
In view of Corollary \ref{EquivHoCatSingle}, Proposition \ref{Explicit} is also valid if we replace $\ell({\tt M})$ by $\tt h(M)\,.$ The unique preimage $\Xi\in\op{Hom}_{\tt Ho(h(M))}(a,b)$ of $$\xi:=\zg_{\tt M^\s}(w)=[w]\in\op{Hom}_{\tt Ho(M^\s)}(a_1,b_1)$$ under the bijection $\op{Ho}(R_1)_{ab}$ is
$$
\begin{tikzcd}
\Xi:\;\; a\arrow[r,"\zg(\zw^a)"]\arrow[r,swap,"\cong"]& \ff^a\ar[r,"\zg(\zu^a)^{-1}"]\ar[r,swap,"\cong"] & f^{a_1}\ar[r,"\zg(E(w))"]\ar[r,swap,"\cong"]& f^{b_1}\ar[r,"\zg(\zu^b)"]\ar[r,swap,"\cong"]&\ff^b\ar[r,"\zg(\zw^b)^{-1}"]\ar[r,swap,"\cong"]&b\;.
\end{tikzcd}
$$
This means that $\Xi$ is the class of
$$
\begin{tikzcd}
a\arrow[r,"\zw^a"]\arrow[r,swap,"\sim"]& \ff^a\ar[r,rightsquigarrow,"(\zu^a)^{-1}"] & f^{a_1}\ar[r,"E(w)"]\ar[r,swap,"\sim"]& f^{b_1}\ar[r,"\zu^b"]\ar[r,swap,"\sim"]&\ff^b\ar[r,rightsquigarrow,"(\zw^b)^{-1}"]&b\;,
\end{tikzcd}
$$
so that we have a zigzag
\be\label{SeqWeqHFS}
\begin{tikzcd}
a\arrow[r,"\zw^a"]\arrow[r,swap,"\sim"]& \ff^a & f^{a_1}\ar[l,swap,"\zu^a"]\ar[l,"\sim"]\ar[r,"E(w)"]\ar[r,swap,"\sim"]& f^{b_1}\ar[r,"\zu^b"]\ar[r,swap,"\sim"]&\ff^b&b\ar[l,swap,"\zw^b"]\ar[l,"\sim"]\;
\end{tikzcd}
\ee
of weak equivalences between objectwise fibrant homotopy fiber sequences. It now suffices to show that if there is a weak equivalence $W:d\stackrel{\sim}\to e$ between objectwise fibrant homotopy fiber sequences $d,e\in\tt h(M)\,$, then $F(d)\in\tt h(N)$ if and only if $F(e)\in\tt h(N)\,.$ Weak equivalence of homotopy fiber sequences means of course a morphism of homotopy fiber sequences which is objectwise a weak equivalence. Hence $W$ is a commutative $\tt M$--cube which is objectwise a weak equivalence of $\tt M$ (see \eqref{ComCub2} and omit $F$). If we apply $F$ to $W$ we get a commutative $\tt N$--cube which is objectwise a weak equivalence of $\tt N$ (see \eqref{ComCub2}) ($\star_1$). From Proposition \ref{ComPara} it follows that $F(d)$ is a model square if and only if $F(e)$ is one. Since $C_D$ and $F(0_{\tt M})$ are acyclic, i.e., since $0_{\tt M}\stackrel{\sim}{\to}C_D$ and $0_{\tt N}\stackrel{\sim}{\to}F(0_{\tt M})\,$, we get that $0_{\tt N}\stackrel{\sim}{\to}F(0_{\tt M})\stackrel{\sim}{\to}F(C_D)$ ($\star_2$), so that $F(C_D)$ is acyclic. The same is true for $F(C_E)\,.$ Hence $F(d)$ is a homotopy fiber sequence if and only if $F(e)$ is a homotopy fiber sequence.
\begin{equation}\label{ComCub2}
\begin{tikzcd}[back line/.style={densely dotted}, row sep=1em, column sep=1em]
F(D_2) \ar[dr, "\sim"] \ar{rr} \ar{dd} &  & F(D_1) \ar{dd} \ar[dr, "\sim"] & \\
& F(E_2) \ar{rr} \ar{dd} & & F(E_1) \\
F(C_D) \ar{rr} \ar[dr, "\sim"] & & F(D_0) \ar[dr, "\sim"]  & \\
& F(C_E) \ar{rr} & & F(E_0)\ar[crossing over, leftarrow]{uu} \\
\end{tikzcd}
\end{equation}
Notice that we used the assumption that the homotopy fiber sequences $a$ and $b$ are objectwise fibrant only in ($\star_1$) and $(\star_2)\,.$ Hence this assumption is not necessary if $F$ preserves all weak equivalences.
\end{proof}

\section{Puppe's long homotopy fiber sequence}\label{SectionPuppe}

The results of this section are based on a suitable notion of loop space functor.\medskip

It is well known that the {\bf path space} fibration of a pointed topological space $(X,x_0)$ is the fibration $$\zp_X:\op{Path}_0X\to X$$ whose total space $$\op{Path}_0X:=\{\za\in C^0([0,1],X):\za(0)=x_0\}$$ is the space of paths of $X$ with starting point $x_0\,$ and whose projection $\zp_X$ maps every path $\za$ to its end point $\za(1)\,$. The fiber $\op{Path}_0X\times_X0$ of $\zp_X$ over $x_0$ is the loop space of $X$ at $x_0\,.$\medskip

A similar concept exists in every pointed model category $\tt M\,.$ Indeed, any functorial factorization into a weak equivalence followed by a fibration
$$
\begin{tikzcd}[back line/.style={densely dotted}, row sep=1em, column sep=1em]
0 \ar[r]\ar[d,"\sim"]& 0\ar[d,"\sim"]\\
\op{Path}_0X\ar[r]\ar[d,two heads] & \op{Path}_0Y\ar[d, two heads]\\
X\ar[r] & Y
\end{tikzcd}
$$
leads to an endofunctor $\op{Path}_0:\tt M\to M$ and a natural transformation $\op{Path}_0\Rightarrow\id_{\tt M}\,.$ However, we prefer to work with a weaker notion of based path space functor $\op{Path}_0\,:$

\begin{defi}\label{BPSF}
A {\bf based path space functor} in a pointed model category $\tt M$ is an endofunctor $$\op{Path}_0:\tt M\to M$$ together with a natural transformation $$\zp:\op{Path}_0\Rightarrow\id_{\tt M}$$ whose components $\zp_X:\op{Path}_0X\twoheadrightarrow X$ at all \emph{fibrant} $X\in\tt M$ are fibrations with acyclic domain $\op{Path}_0X\stackrel{\sim}{\leftarrow}0\,.$\medskip
\end{defi}

\begin{rem}
\emph{An important special case occurs when the pointed model category under consideration is right proper and the condition that the components of the natural transformation $\zp$ are fibrations with acyclic domain is fulfilled for each object $X\,$, whether fibrant or not (just as when the based path space functor is induced by a functional factorization). In the following we refer to this case as the {\bf strongly proper case}.}
\end{rem}

Let $\tt I$ be the category $1\to 2\leftarrow 0\,.$ The functor category $\tt Fun(I,M)$ is then the category of cospans of $\tt M\,.$ To every based path space functor $\op{Path}_0$ we can associate the functor $$\cC_{\op{Path}_0}:\tt M\to Fun(I,M)\;$$ (if there is no possibility of confusion, we simply write $\cC$) that is defined on objects $X\in\tt M$ by the cospan
$$\cC X : \;\;0\to X\leftarrow\op{Path}_0X$$
and on morphisms $f:X\to Y$ of $\tt M$ by the commutative diagram
\vspace{0.1cm}
$$
\begin{tikzcd}[back line/.style={densely dotted}, row sep=1.5em, column sep=1em]
\cC X\ar[d,swap,"\cC f"]\ar[d,"\;\;\;:"]& &0\ar[r]\ar[d]& X\ar[d] & \op{Path}_0X \ar[l]\ar[d]\\
\cC Y & & 0\ar[r] & Y & \op{Path}_0Y\ar[l]
\end{tikzcd}
$$

\begin{defi}
In a pointed model category $\tt M\,$, the {\bf loop space functor} associated to a based path space functor $\op{Path}_0$ is the composite $$\zW_{\op{Path}_0}:=\op{Lim}\circ\;\cC_{\op{Path}_0}:\tt M\to M$$ (or just $\zW$ if no confusion is possible) of the cospan functor $\cC_{\op{Path}_0}:\tt M\to Fun(I,M)$ and the limit functor $\op{Lim}:\tt Fun(I,M)\to M\,.$ In particular the loop space of $X\in\tt M$ is the object $$\zW X = \op{Path}_0X\times_X0\in\tt M\;.$$
\end{defi}

\begin{rem}\label{FPP}
\emph{From here on we work in a fixed pointed model category $\tt M\,$, which is equipped with a fixed based path space functor $\op{Path}_0\,$, and consider the associated loop space functor $\zW\,.$}
\end{rem}

\begin{theo}\label{Preservation}
The loop space functor preserves all fibrant objects, the weak equivalences between fibrant objects and the objectwise fibrant homotopy fiber sequences. In the strongly proper case, the loop space of every object is fibrant and the loop space functor preserves all weak equivalences and all homotopy fiber sequences.
\end{theo}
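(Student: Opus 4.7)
The plan is to establish the four parts of the theorem sequentially, feeding each into the next.

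\emph{Fibrancy and weak equivalences between fibrant objects.} For fibrant $X$, Definition \ref{BPSF} gives $\pi_X:\op{Path}_0X\twoheadrightarrow X$ a fibration; fibrations are closed under pullback, so $\Omega X\to 0$ is a fibration, and composing with $0\twoheadrightarrow *$ shows $\Omega X$ is fibrant. For a weak equivalence $f:X\stackrel{\sim}{\to}Y$ between fibrant objects, I first check that $\op{Path}_0 f$ is a weak equivalence: the triangle $0\to\op{Path}_0 X\stackrel{\op{Path}_0 f}{\to}\op{Path}_0 Y$ commutes by uniqueness of morphisms from the initial object, and both legs are weak equivalences because the intermediate objects are acyclic, so 2-out-of-3 applies. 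The cospans $0\leftarrow\op{Path}_0 X\to X$ and $0\leftarrow\op{Path}_0 Y\to Y$ then have all fibrant vertices and contain the fibrations $\pi_X,\pi_Y$, so Corollary \ref{CorLur} identifies $\Omega X,\Omega Y$ as models of their full homotopy pullbacks. The triple $(\id_0,\op{Path}_0 f,f)$ is a weak equivalence of cospans, so Proposition \ref{ComPara} forces the universally induced map $\Omega f$ to be a weak equivalence.

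\emph{Objectwise fibrant homotopy fiber sequences.} Let $(A_2,A_1,C,A_0)$ be such a sequence. Acyclicity of $\Omega C$ is immediate: $0\stackrel{\sim}{\to}C$ is a weak equivalence between fibrant objects, and $\Omega$ preserves it by the step above, while $\Omega 0\cong\op{Path}_0 0$ (pullback along the identity of $0$) is acyclic by Definition \ref{BPSF}. To verify that $(\Omega A_2,\Omega A_1,\Omega C,\Omega A_0)$ is a model square I first reduce to a strict pullback: factor $A_1\to A_0$ as $A_1\stackrel{\sim}{\to}F\twoheadrightarrow A_0$ and set $P:=F\times_{A_0}C$. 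Then $(P,F,C,A_0)$ is a model square (Corollary \ref{CorLur}) with fibrant vertices, weakly equivalent to $(A_2,A_1,C,A_0)$ through the component maps $(A_2\to P,\;A_1\stackrel{\sim}{\to}F,\;\id_C,\;\id_{A_0})$. Applying $\Omega$ preserves this weak equivalence of squares by the previous step, so Proposition \ref{ComPara} reduces the problem to showing that $(\Omega P,\Omega F,\Omega C,\Omega A_0)$ is itself a model square.

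\emph{The main obstacle and the strongly proper case.} This last reduction is the genuine content of the theorem and where I expect the difficulty. My plan is to realize $\Omega X$ as a model of the homotopy pullback $0\times^h_X 0$ (which is valid because $\pi_X$ is a fibration between fibrant objects) and to exploit the commutation of iterated homotopy pullbacks: since $P\approx F\times^h_{A_0}C$, one expects $\Omega P\approx\Omega F\times^h_{\Omega A_0}\Omega C$. Concretely I would assemble the three-dimensional diagram whose $X$-column is the cospan $0\to X\leftarrow\op{Path}_0 X$ for each $X\in\{P,F,C,A_0\}$ and whose rows are the maps of $(P,F,C,A_0)$, then combine the pasting law for model squares (Proposition \ref{PasLaw}) with Proposition \ref{QuiPresMod} applied to the right Quillen functor $\op{Lim}$ on Reedy-fibrant cospans to identify the bottom face with the desired homotopy pullback. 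The strongly proper case then follows by dropping the fibrancy hypotheses throughout: $\pi_X$ is now a fibration for every $X$, the 2-out-of-3 argument for $\op{Path}_0 f$ never used fibrancy of the source or target, and in each invocation of Corollary \ref{CorLur} I substitute Theorem \ref{RightProper}, which applies in any right proper model category without requiring fibrant vertices.
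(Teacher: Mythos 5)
Your fibrancy argument matches the paper's. For weak equivalences between fibrant objects, however, the citation is off: Proposition \ref{ComPara} transfers the model-square property between two squares that are \emph{already known} to be connected by four weak equivalences, so it cannot be used to \emph{produce} the conclusion that the fourth map $\Omega f$ is a weak equivalence. The reference you want is Theorem \ref{IndHoPullMod} together with the remark following it (or Theorem \ref{RightProper} in the strongly proper case), exactly as the paper invokes for its "type $U$" squares: $\Omega X$ is a canonical representative of the full homotopy pullback of $0\to X\leftarrow\op{Path}_0 X$, the cospan $0\to Y\leftarrow\op{Path}_0 Y$ is a weakly equivalent fibrant replacement with one fibration, and the universal morphism between two canonical representatives is a weak equivalence.

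Your reduction for homotopy fiber sequences is correct, and takes a slightly different route from the paper: you retain the acyclic vertex $C$ and replace only $(A_2,A_1)$ by $(P,F)$, then conclude via Proposition \ref{ComPara}; the paper replaces $(A_2,A_1,C)$ by $(K,\bar\cA,0)$ via the kernel of a fibration and concludes via Corollary \ref{PresHoFibSeq}. Both work, and yours is a bit more direct.

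The genuine gap is the main step, which you only sketch. Reading the three-dimensional diagram as a square $(\cC P,\cC F,\cC C,\cC A_0)$ in $\tt Fun(I,M)$ and applying Proposition \ref{QuiPresMod} with $G=\op{Lim}$ only works once you have shown this square of cospans \emph{is} a model square with fibrant vertices in $\tt Fun(I,M)$, and that is not free. Since $\cC$ does not preserve pullbacks, the objectwise pullback of $\cC C\to\cC A_0\leftarrow\cC F$ at the path-space coordinate is $\op{Path}_0 C\times_{\op{Path}_0 A_0}\op{Path}_0 F$, which is not $\op{Path}_0 P$. You would have to show the comparison map $\op{Path}_0 P\to\op{Path}_0 C\times_{\op{Path}_0 A_0}\op{Path}_0 F$ is a weak equivalence; the target is a pullback of acyclic objects but is not a priori acyclic, unless, for instance, $\op{Path}_0$ sends the fibration $F\twoheadrightarrow A_0$ to a fibration — a property Definition \ref{BPSF} does not grant. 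The paper never passes to $\tt Fun(I,M)$: it introduces the auxiliary kernel $\cK:=\ker(\op{Path}_0\bar\cA\twoheadrightarrow\mfa)$ and, working entirely in $\tt M$, builds a specific model square weakly equivalent to $\zW(k)$ by chaining type $L$ and type $U$ squares through Corollary \ref{CorLur}, the pasting law, and Theorems \ref{IndHoPullMod}/\ref{RightProper} for each universal arrow. Your "commutation of iterated homotopy pullbacks" is indeed the statement being proved, but in this framework it has to be established by exactly such a hands-on construction; as written, your third step does not yet close the argument. The strongly proper paragraph is fine, but inherits the same gap.
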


\begin{proof}
We start with a few observations. In this proof we have $0\stackrel{\sim}{\to}\op{Path_0}X\twoheadrightarrow X$ for every object $X\in\tt M\,$ that we consider, since either this object is fibrant or we work in the strongly proper case. For every $\tt M$--morphism $f:X\to Y$, we get a commutative diagram
\be
\begin{tikzcd}[back line/.style={densely dotted}, row sep=1em, column sep=1em]
&&&0\ar[dd,"\sim"]\\
&&0\ar[dd,very near start,"\sim"]\ar[ru,"\sim"]&\\
&\zW Y\ar[dd]\ar[rr]&&\op{Path}_0Y\ar[dd,two heads]\\
\zW X\ar[rr]\ar[dd]\ar[ru,dashed]&&\op{Path}_0X\ar[dd,two heads]\ar[ru]&\\
&0\ar[rr]&&Y\\
0\ar[rr]\ar[ru,near end,"\sim"]&&X\ar[ru,near start,"f"]&\\
\end{tikzcd}
\ee
It contains three types of commutative squares, which will appear several times below. We refer to squares similar to the commutative right lower square as squares of the type $P\,,$ to squares similar to the commutative front square as squares of the type $L\,,$ and to squares similar to the commutative upper square as squares of the type $U\,.$ In every type $P$ square the arrow between the path spaces is a weak equivalence. Further, it follows from Corollary \ref{CorLur} that every type $L$ square is a model square. Finally, in every type $U$ square that is induced by a weak equivalence $f\,,$ the universal arrow is a weak equivalence because of Theorem \ref{IndHoPullMod} and Theorem \ref{RightProper}.\medskip

From the last observation and the closedness of fibrations under pullbacks it follows that the statements about weak equivalences and fibrant objects in Theorem \ref{Preservation} are true.\medskip

Let now $A\stackrel{f}{\to}\cA\stackrel{g}{\to} \mfa$ be a homotopy fiber sequence. If we are not in the strongly proper case, we assume that it is objectwise fibrant. If we factor $g$ into a weak equivalence followed by a fibration, we get the following commutative diagram
\be\label{C1}
\begin{tikzcd}[back line/.style={densely dotted}, row sep=1em, column sep=1em]
A\ar[rr]\ar[dd]&&\cA\ar[dd,very near start,swap,"g"]\ar[rd,"\sim"]&\\
&K:=\ker\bar g\ar[rr]\ar[dd, two heads]&&\bar\cA\ar[dd,two heads,"\bar g"]\\
C_A\ar[rr]&&\mfa\ar[rd,equal]&\\
&0\ar[rr]&&\mfa
\end{tikzcd}
\ee
From Corollary \ref{CorLur} it follows that the front square is a homotopy fiber sequence. Since $\zW$ is an endofunctor that preserves weak equivalences between fibrant objects or, in the strongly proper case, all weak equivalences, and since $\zW(0)=\op{Path}_0(0)$ is acyclic, we can apply Corollary \ref{PresHoFibSeq} to the images under $\zW$ of the homotopy fiber sequences $a:\;A\to \cA\to \mfa$ and $k:\;K\to \bar\cA\to\mfa\,:$ to show that $\zW(a)$ is a homotopy fiber sequence, it suffices to prove that $\zW(k)$ ($\zW$ applied to the front square of \eqref{C1}) is a homotopy fiber sequence.\medskip

Because of Proposition \ref{ComPara}, it is even enough to build a model square that is weakly equivalent to the commutative square $\zW(k)\,.$ We get this model square by constructing the following commutative diagram step by step. The diagram has an upper, a lower, a left, a right, a front, a back and two middle parts, the parallel (to the front) middle part and the orthogonal one. In the following description, the first adjective refers always to the part and the second to the square we are looking at in that part. For instance, the right front square is the front square of the right part, i.e., the square $\op{Path}_0\bar\cA\;\bar\cA\;\op{Path}_0\mfa\;\mfa\,,$ whereas the front right square is the right square of the front part, i.e., the square $K\;\bar\cA\;0\;\mfa\,.$
\be\label{C2}
\begin{tikzcd}[back line/.style={densely dotted}, row sep=1em, column sep=1em]
\zW K\ar[rrr]\ar[rd,dashed]\ar[dd,equal]&&&\op{Path}_0K\ar[dd]\ar[rd,dashed]\ar[drrr]&&&&\\
&\zW\bar\cA\ar[rrrrr, bend left=8]\ar[rrr,dashed]\ar[rdd]\ar[ddd,equal]&&&\cK\ar[rr]\ar[rdd,dashed]\ar[rdddd, two heads]\ar[ddd,dashed]&&\op{Path}_0\bar\cA\ar[rdd, two heads]\ar[rdddd, two heads]\ar[ddd]&\\
\zW K\ar[rrr,dashed]\ar[rdd,dashed]&&&\zW(0)=\op{Path}_00\ar[rrrdd]\ar[rdd,dashed]&&&&\\
&&0\ar[rrr]\ar[dd]&&&K\ar[rr]\ar[dd,two heads]&&\bar\cA\ar[dd, two heads]\\
&\zW\bar\cA\ar[rrr,dashed]\ar[rd]&&&\zW\mfa\ar[rr]\ar[rd]&&\op{Path}_0\mfa\ar[rd, two heads]&\\
&&0\ar[rrr]&&&0\ar[rr]&&\mfa
\end{tikzcd}
\ee
We start from the front right square (which is the front homotopy fiber sequence of \eqref{C1}). The right front square is a type $P$ square. Let now $\cK$ be the kernel of the morphism $\op{Path}_0\bar\cA\twoheadrightarrow\bar\cA\twoheadrightarrow\mfa\;:$ Corollary \ref{CorLur} implies that the resulting square (the `diagonal' square) is a homotopy fiber sequence. The universal arrow $\cK\dashrightarrow K$ makes the upper right square and the middle upper triangle commutative. The pasting law for model squares now implies that the upper right square is a model square. The lower right square is a type $L$ square. The universal arrow $\cK\dashrightarrow\zW\mfa$ renders the middle lower triangle, the middle front square and the middle right square commutative, so that the right cube is fully commutative. Moreover, Theorems \ref{IndHoPullMod} and \ref{RightProper} imply that this universal arrow is a weak equivalence.\medskip

We now describe the front left cube. Its front, left and lower squares obviously commute (and so does its right square). The total upper front square is a type $L$ square and therefore a model square. Since $\cK$ is a kernel, the universal arrow $\zW\bar\cA\dashrightarrow\cK$ makes the parallel bent triangle commutative. As $K$ is a kernel, the upper front left square commutes and because of the pasting law it is a model square. The parallel middle part of \ref{C2} is a type $U$ square. As $\zW\mfa$ is a pullback, the middle left square commutes.\medskip

It still remains to explain the back cube and the back $3D$ wedge. We start looking at the union of this cube and wedge. The lower (left) square is the commutative square $\zW(k)$ and the lower triangle can be viewed as a type $U$ square. The left square obviously commutes and the right square is the image under $\op{Path}_0$ of a commutative square and is therefore itself commutative. The back square can be interpreted as a type $U$ square and the total upper square is also a type $U$ square. To understand the middle square (the back square of the orthogonal middle part) and the resulting subdivision of the union, we have to look again at the overall diagram \eqref{C2}. As $\cK$ is a kernel, there is a universal arrow $\op{Path}_0K\dashrightarrow\cK$ that makes the upper triangle commutative. The total upper left square is a type $L$ and a model square and since $\cK$ is a kernel, the upper back square commutes and is, in view of the pasting law, a model square.\medskip

It suffices now to show that this model square is weakly equivalent to $\zW(k)\,.$ We know already that $\op{Path}_0K\to\op{Path}_00$ and $\cK\dashrightarrow\zW\mfa$ are weak equivalences and that the lower back square commutes. Hence all we have to do is to prove that the middle back square is commutative. However, this is the case as $\zW\mfa$ is a pullback.
\end{proof}

Let us recall that if $f:X\to Y$ is a base point preserving continuous map between pointed topological spaces $(X,x_0)$ and $(Y,y_0)\,,$ its homotopy fiber or {\bf homotopy kernel} $$K_f:=\op{Path}_0Y\times_YX=\{(\za,x)\in\op{Path}_0Y\times X:\za(1)=f(x)\}$$ $$=\{(\za,x)\in C^0([0,1],Y)\times X:\za(0)=y_0,\za(1)=f(x)\}\;$$ fits into the `homotopy fiber sequence' $K_f\to X\to Y\,.$ The latter can be extended to a long sequence. More precisely, the loop space $$\zW Y:=\{\za\in C^0([0,1],Y):\za(0)=\za(1)=y_0=f(x_0)\}$$ injects into the homotopy fiber $K_f$ thus providing a {\bf connecting morphism} $\zd:\zW Y\to K_f\,.$ The extending long sequence mentioned is then the sequence $$\cdots\to \zW^2Y\to\zW(K_f)\to\zW X\to\zW Y\to K_f\to X\to Y\;$$ which is referred to as {\bf Puppe's sequence}.\medskip

We will generalize Puppe's sequence to our context. We start with the following definition (see also Remark \ref{FPP}):

\begin{defi}\label{HoKerCon}
Let $f:X\to Y$ be an $\tt M$--morphism between \emph{fibrant objects}. We refer to the pullback $$K_f:=\op{Path}_0Y\times_YX$$
as the {\bf homotopy kernel} of $f\,,$ to the universal arrow $$\zd_f:\zW Y\dashrightarrow K_f$$ as the {\bf connecting morphism} associated to $f$
\be\label{HoKerConPic}
\begin{tikzcd}[back line/.style={densely dotted}, row sep=1em, column sep=1em]
\zW Y\ar[rrr]\ar[dd,two heads]\ar[rd,dashed,very near end,"\zd_f"]&&&\op{Path}_0Y\ar[dd,two heads]\ar[rd,equal]&&\\
&K_f\ar[rrr,near start,"p_f"]\ar[ddd,two heads,near end,"\zp_f"]&&&\op{Path}_0Y\ar[ddd,two heads]&\\
0\ar[rrr]\ar[rdd]&&&Y\ar[rdd,equal]&&\\
&&&&&0\ar[dd]\ar[luu,near start,"\sim"]\\
&X\ar[rd,equal]\ar[rrr,"f"]&&&Y\ar[rd,equal]&\\
&&X\ar[rrr,"f"]&&&Y\
\end{tikzcd}
\ee
and to the sequence
\be\label{PuppeSeq}
\cP_f:\;\;\cdots\longrightarrow \zW^2 Y\stackrel{\zW(\zd_f)}{\longrightarrow}\zW(K_f)\stackrel{\zW(\zp_f)}{\longrightarrow}\zW X\stackrel{\zW f}{\longrightarrow}\zW Y\stackrel{\zd_f}{\longrightarrow}K_f\stackrel{\zp_f}{\longrightarrow} X\stackrel{f}{\longrightarrow}Y\;
\ee
as {\bf Puppe's sequence} of $f\,.$
\end{defi}

Definition \ref{HoKerCon} makes also sense if $X$ and $Y$ are not fibrant. However, if they are, the homotopy fiber or homotopy kernel $K_f$ of $f$ is a fibrant object and it is in view of Theorem \ref{IndHoPull} a canonical representative of the homotopy pullback $0\times_Y^hX\,.$ This justifies the assumption that $X$ and $Y$ are fibrant. In the strongly proper case, the homotopy fiber $K_f$ of $f$ is isomorphic in the homotopy category to the homotopy fiber $\op{HFib}(f)$ of $f$ of \cite{Hir}. Therefore \emph{we use the terminology of Definition \ref{HoKerCon} also in the strongly proper case}.

\begin{prop}\label{Puppe}
Let $\,\tt M^\s_{\op{f}}$ be the full subcategory of $\,\tt M^\s$ consisting of all the morphisms between fibrant objects of $\,\tt M$ and let $\tt M^\s_\dagger$ be the category $\tt M^\s_{\op{f}}\,,$ except in the strongly proper case where it is the full category $\tt M^\s\,.$ There is an extension functor $$\cP:{\tt M}^\s_\dagger\ni f\mapsto \cP_f\in\ell({\tt M})\;,$$ whose values $\cP_f$ on objects $f\in\tt M^\s_{\op{f}}$ are objectwise fibrant long homotopy fiber sequences and to which we refer as {\bf Puppe's extension functor}.
\end{prop}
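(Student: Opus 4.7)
The plan is to verify three statements: every consecutive triple of $\cP_f$ is a homotopy fiber sequence, the assignment $f\mapsto\cP_f$ extends to a functor on ${\tt M}^\s_\dagger$, and every vertex of $\cP_f$ is fibrant when $f\in{\tt M}^\s_{\op{f}}$. Because the remainder of Puppe's sequence is obtained by iterating $\zW$, and Theorem~\ref{Preservation} guarantees that $\zW$ preserves (objectwise fibrant) homotopy fiber sequences, it is enough to verify the three base triples
\[K_f\to X\to Y,\qquad \zW Y\to K_f\to X,\qquad \zW X\to \zW Y\to K_f.\]

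The first triple is immediate: the pullback square defining $K_f=\op{Path}_0Y\times_YX$ satisfies the hypotheses of Corollary~\ref{CorLur}, since $\op{Path}_0Y\twoheadrightarrow Y$ is a fibration between fibrant objects (or we are in the strongly proper case), and its lower-left vertex $\op{Path}_0Y$ is acyclic by Definition~\ref{BPSF}. For the second triple I would invoke the pasting law (Proposition~\ref{PasLaw}): pasted along the common fibration $\op{Path}_0Y\twoheadrightarrow Y$, the defining pullback square of $\zW Y=\op{Path}_0Y\times_Y 0$ appears as the total rectangle and the first homotopy fiber sequence as the right square; both are model squares (again by Corollary~\ref{CorLur}), so the left square with vertices $(\zW Y,K_f,0,X)$ is a model square, and its lower-left vertex $0$ is acyclic.

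For the third triple the implicit acyclic vertex is no longer the zero object but the path space $\op{Path}_0X$. Naturality of $\zp:\op{Path}_0\Rightarrow\id_{\tt M}$ makes the square $\zp_Y\circ\op{Path}_0f=f\circ\zp_X$ commute; feeding this square into the universal property of the pullback $K_f$ produces a canonical morphism $\sigma:\op{Path}_0X\to K_f$ satisfying $p_f\circ\sigma=\op{Path}_0f$ and $\zp_f\circ\sigma=\zp_X$. Reading the second model square with $\zW Y,0$ on top and $K_f,X$ on the bottom (so that $\zd_f:\zW Y\to K_f$ becomes the left vertical arrow), I would paste it horizontally on the right of the square whose top row is $\zW X\xrightarrow{\zW f}\zW Y$ and whose bottom row is $\op{Path}_0X\xrightarrow{\sigma}K_f$, with vertical arrows the canonical inclusion $\zW X\to\op{Path}_0X$ and $\zd_f$. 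The left square commutes because both composites $\zW X\to K_f$ agree by uniqueness in the universal property of $K_f$, and the total pasted rectangle collapses to the defining pullback of $\zW X=\op{Path}_0X\times_X 0$, hence is a model square by Corollary~\ref{CorLur}. Proposition~\ref{PasLaw} then forces the left square to be a model square with acyclic implicit vertex $\op{Path}_0X$, giving the third homotopy fiber sequence.

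Functoriality of $\cP$ is then automatic: naturality of $\op{Path}_0$ together with the universal properties of the pullbacks defining $K_g,\zW Y',\zW X',\ldots$ propagate any $\psi=(\psi_0,\psi_1):f\to g$ level by level to a morphism $\cP(\psi):\cP_f\to\cP_g$ of long homotopy fiber sequences, and composition is respected by construction. Objectwise fibrancy of $\cP_f$ when $X$ and $Y$ are fibrant follows from the preservation of fibrant objects by $\op{Path}_0$ together with the closure of fibrations under pullback: these propagate fibrancy from $X,Y$ to $\op{Path}_0Y,K_f,\op{Path}_0X,\zW Y,\zW X$, and inductively to every subsequent vertex. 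The chief obstacle I anticipate is the correct identification of the implicit acyclic vertex $\op{Path}_0X$ and the connecting map $\sigma$ in the third base triple; once these are in place, all three base cases fit into a single pasting-law template.
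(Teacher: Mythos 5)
Your proposal is correct and follows essentially the same route as the paper: both identify the three base triples $K_f\to X\to Y$, $\zW Y\to K_f\to X$, $\zW X\to \zW Y\to K_f$, establish the first by Corollary~\ref{CorLur} (implicit vertex $\op{Path}_0Y$), derive the other two by the pasting law of Proposition~\ref{PasLaw} against the defining pullbacks of $\zW Y$ and $\zW X$ (implicit vertices $0$ and $\op{Path}_0X$), and then invoke Theorem~\ref{Preservation} to iterate $\zW$. Functoriality via uniqueness of universal arrows and objectwise fibrancy via closure of fibrations under pullback also match the paper's argument.
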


\begin{proof}
Let $f\in{\tt M}^\s_\dagger$ be an $\tt M$--morphism with source $X$ and target $Y\,.$\medskip

We will describe the following commutative diagram using the terminology introduced at the beginning of the proof of Theorem \ref{Preservation}.
\be\label{PuppeProofPic}
\begin{tikzcd}[back line/.style={densely dotted}, row sep=1em, column sep=1em]
&&&\\
\zW X\ar[dd,bend right,two heads]\ar[r]\ar[d,dashed]&\op{Path}_0X\ar[r,equal]\ar[d,dashed]\ar[dd,bend right,two heads]&\op{Path}_0X\ar[d]&0\ar[l,swap,"\sim"]\\
\zW Y\ar[r,dashed]\ar[d,two heads]\ar[rr,bend right,crossing over]&K_f\ar[r]\ar[d,two heads]&\op{Path}_0Y\ar[d,two heads]&0\ar[l,swap,"\sim"]\\
0\ar[r]\ar[rr,bend right,crossing over]&X\ar[r]&Y
\end{tikzcd}
\ee
The three lower squares of \eqref{PuppeProofPic} are nothing more that the commutative cube of \eqref{HoKerConPic} in Definition \ref{HoKerCon}. The \emph{lower right square} is a model square and even a homotopy fiber sequence because of Corollary \ref{CorLur} and the total lower square is a type $L$ square and therefore also a model square. From the pasting law for model squares it now follows that the \emph{lower left square} is a homotopy fiber sequence. The total left square is also a model square as it is of the type $L\,.$ The total upper square is of the type $U$ and the total right square is of the type $P\,.$ The arrow $\op{Path}_0X\dashrightarrow K_f$ is the universal morphism and makes the upper right square and the vertical middle bent triangle commutative. The upper left square commutes because of the uniqueness of the universal morphism from $\zW X$ to $K_f\,.$ Finally the pasting law implies that the \emph{upper left square} is a homotopy fiber sequence.\medskip

Since $\zW$ preserves objectwise fibrant homotopy fiber sequences and even all homotopy fiber sequences if we work in a strongly proper environment, the images $$\zW(K_f)\to\zW X\to \zW Y\,,\quad \zW^2 Y\to \zW(K_f)\to \zW X\,,\quad \zW^2X\to\zW^2Y\to\zW(K_f)\,,\quad \cdots\;$$ are all homotopy fiber sequences that are objectwise fibrant if $f\in{\tt M}^\s_{\op{f}}\,.$\medskip

Let now $f,g\in{\tt M}^\s_\dagger$ be $\tt M$--morphisms with source $X,U$ and target $Y,V\,,$ respectively, and let $\psi=(\psi_0,\psi_1):f\to g$ be an ${\tt M}^\s$--morphism between them. The next diagram, in which dashed arrows represent universal morphisms as usual, defines $\cP_\psi:\cP_f\to\cP_g$ and shows that $\cP_\psi$ is an $\ell({\tt M})$--morphism, i.e., a sequence of commutative cubes.
\be\label{PuppeProofPic2}
\begin{tikzcd}[back line/.style={densely dotted}, row sep=1em, column sep=1em]
&&&\zW X\ar[r]\ar[d,dashed]\ar[ddlll,dashed]&\op{Path}_0X\ar[dr]\ar[d,dashed]\ar[ddlll,crossing over]&\\
&&&\zW Y\ar[dddlll,bend left=17.5,dashed]\ar[r,dashed]\ar[dd]&K_f\ar[r]\ar[dd]\ar[dddlll,dashed]&\op{Path}_0Y\ar[dd]\\
\zW U\ar[r]\ar[dd,dashed]&\op{Path}_0U\ar[ddr,crossing over]\ar[dd,crossing over,dashed]&&&&\\
&&&0\ar[r]\ar[ddlll, bend right=5]&X\ar[r,"f"]\ar[ddlll,near start,"\psi_0"]&Y\ar[ddlll,"\psi_1"]\\
\zW V\ar[r,dashed]\ar[d,crossing over]&K_g\ar[r,crossing over]\ar[d,crossing over]&\op{Path}_0V\ar[d,crossing over]\ar[from=uuurrr,bend left=7.5, crossing over]&&&\\
0\ar[r]&U\ar[r,near end,"g"]&V&&&
\end{tikzcd}
\ee
To convince ourselves of this claim, we can first note that all arrows from the back part of Diagram \eqref{PuppeProofPic2} to the front part are easy to understand. The commutation of the three cubes of \eqref{PuppeProofPic2} is obvious because of the uniqueness of universal arrows. As $\zW$ is a functor, we get the desired sequence of commutative cubes. Moreover, the assignment $\cP:\psi\mapsto \cP_\psi$ clearly respects compositions and identities, which completes the proof.
\end{proof}

\begin{prop}
The extension $\cP_f\in\ell({\tt M})$ of a morphism $f\in{\tt M}^\s_\dagger$ to a long homotopy fiber sequence is unique up to a canonical isomorphism of $\,\tt Ho(\ell(M))$, i.e., a zigzag of weak equivalences of $\ell({\tt M})\,.$
\end{prop}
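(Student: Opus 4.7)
The plan is to derive the statement as an immediate corollary of Theorem~\ref{EquivHoCat} and Proposition~\ref{Explicit}. First I would observe that by the construction of Puppe's extension in Proposition~\ref{Puppe} we have $R_1(\cP_f)=f$ on the nose, so if $a_\bullet\in\ell({\tt M})$ is any other long homotopy fiber sequence with $R_1(a_\bullet)=f$, then $\cP_f$ and $a_\bullet$ have the same image under the restriction functor and are therefore both candidate preimages for $f$ under $R_1$.

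Next I would apply Theorem~\ref{EquivHoCat}, which supplies a bijection
$$\op{Ho}(R_1)_{\cP_f,a_\bullet}:\op{Hom}_{\tt Ho(\ell(M))}(\cP_f,a_\bullet)\isoto\op{Hom}_{\tt Ho(M^\s\!)}(f,f)\,.$$
I would let $\Xi_\bullet$ be the unique preimage of $\id_f\,$. Since $\id_f$ is an isomorphism and $\op{Ho}(R_1)$ is an equivalence, hence conservative through this hom-set bijection, $\Xi_\bullet$ is an isomorphism in $\tt Ho(\ell(M))\,$. Uniqueness of the preimage is precisely the canonicity claimed in the statement.

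The last step is to exhibit $\Xi_\bullet$ as a zigzag of weak equivalences of $\ell({\tt M})\,$. For this I would feed $\xi=[\id_f]$ into Proposition~\ref{Explicit}, which presents $\Xi_\bullet$ as the composite
$$\cP_f\xrightarrow{\zg(\zw^{\cP_f}_\bullet)}\ff^{\cP_f}_\bullet\xleftarrow{\zg(\zu^{\cP_f}_\bullet)}f^{f}_\bullet\xrightarrow{\op{Ho}(E)([\id_f])}f^{f}_\bullet\xrightarrow{\zg(\zu^{a}_\bullet)}\ff^{a}_\bullet\xleftarrow{\zg(\zw^{a}_\bullet)}a_\bullet\,.$$
Because $\op{Ho}(E)([\id_f])=\zg(E(\id_f))=\id_{f^{f}_\bullet}$, and because each of the four components $\zw^{\cP_f}_\bullet,\zu^{\cP_f}_\bullet,\zu^{a}_\bullet,\zw^{a}_\bullet$ is a weak equivalence of $\ell({\tt M})$ by Corollary~\ref{NatWeq} and Lemma~\ref{NatWeqs}, this composite is represented in $\ell({\tt M})$ by the zigzag
$$\cP_f\stackrel{\sim}{\longrightarrow}\ff^{\cP_f}_\bullet\stackrel{\sim}{\longleftarrow}f^{f}_\bullet\stackrel{\sim}{\longrightarrow}\ff^{a}_\bullet\stackrel{\sim}{\longleftarrow}a_\bullet\,,$$
which is exactly the form of canonical isomorphism required.

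I do not expect a genuine obstacle: all of the technical work has been carried out in Theorem~\ref{EquivHoCat} and Proposition~\ref{Explicit}, and the present statement only assembles those inputs. The only mild subtlety is the interpretation of ``another extension'': if one only assumes $R_1(a_\bullet)\cong f$ in $\tt Ho(M^\s\!)$ rather than the strict equality $R_1(a_\bullet)=f$, I would transport this isomorphism of $\tt Ho(M^\s\!)$ through the bijection $\op{Ho}(R_1)_{\cP_f,a_\bullet}$ in place of $\id_f\,$, and the same Proposition~\ref{Explicit} yields an analogous zigzag representation.
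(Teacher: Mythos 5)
Your proof is correct and takes essentially the same route as the paper's own proof, which likewise reduces the statement to Proposition~\ref{Explicit} applied to the class $\xi=[\id_f]$ (the paper's argument is even terser: it just names $a_\bullet:=\cP_f$, takes an arbitrary other extension $b_\bullet$ with $R_1(b_\bullet)=f$, and invokes Proposition~\ref{Explicit} to produce the canonical isomorphism extending $[\id:f\to f]$). You supply useful extra detail — noting that $R_1(\cP_f)=f$ on the nose, that $\op{Ho}(E)([\id_f])=\id_{f^{f}_\bullet}$ so the middle arrow collapses, and that the remaining four arrows are weak equivalences of $\ell({\tt M})$ by Corollary~\ref{NatWeq} and Lemma~\ref{NatWeqs} — but none of this changes the underlying strategy.
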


\begin{proof}
The statement is a consequence of Proposition \ref{Explicit}. Indeed, if $b_\bullet\in\ell({\tt M})$ is another extension of $f$ than $a_\bullet:=\cP_f\in\ell({\tt M})\,,$ there is a canonical isomorphism in the homotopy category of $\ell({\tt M})$ between $a_\bullet$ and $b_\bullet$ that extends the class $\xi:=[\id:f\to f]\,.$
\end{proof}

In Algebraic Topology the long exact sequence of homotopy groups of a fibration is a consequence of Puppe's sequence of the fibration: to get the long exact sequence it suffices to apply to Puppe's sequence the covariant Hom functor in the homotopy category associated to the 0-sphere. A similar result exists in our context of a pointed model category $\,\tt M$ equipped with a based path space functor $\op{Path}_0$ and the corresponding loop space functor $\zW\,.$ More precisely, if $$[A,-]:=\op{Hom}_{\tt Ho(M)}(A,-)$$ is the covariant Hom functor in the homotopy category associated to an object $A\in{\tt M}$ and if $\zg$ is the localization functor of the category $\tt M\,,$ we have the following

\begin{prop}\label{LESH}
Let $f\in{\tt M}^\s_\dagger$ and let $A\in{\tt M}\,.$ The sequence
$$
\cdots\longrightarrow [A,\zW^2 Y]\stackrel{[A,\zg(\zW(\zd_f))]}{\longrightarrow}[A,\zW(K_f)]\stackrel{[A,\zg(\zW(\zp_f))]}{\longrightarrow}[A,\zW X]\stackrel{[A,\zg(\zW f)]}{\longrightarrow}
$$
\be\label{PuppeSeq}
[A,\zW Y]\stackrel{[A,\zg(\zd_f)]}{\longrightarrow}[A,K_f]\stackrel{[A,\zg(\zp_f)]}{\longrightarrow} [A,X]\stackrel{[A,\zg f]}{\longrightarrow}[A,Y]\;
\ee
is a long exact sequence of $\op{Hom}_{\tt Ho(M)}$--sets.
\end{prop}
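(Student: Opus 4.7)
The plan is to reduce the exactness of (\ref{PuppeSeq}) to a single local statement: for any homotopy fiber sequence $K \xrightarrow{\pi} X \xrightarrow{q} Y$ in $\tt M$ and any $A\in\tt M\,,$ the sequence
$$[A,K]\xrightarrow{[A,\zg\pi]}[A,X]\xrightarrow{[A,\zg q]}[A,Y]$$
is exact at $[A,X]\,$, where the distinguished element of each Hom--set is the class of the morphism that factors through $0\,.$ By Proposition \ref{Puppe}, Puppe's extension $\cP_f$ is a long homotopy fiber sequence, so every three consecutive terms form a homotopy fiber sequence; once the local statement is established, it applies to each segment of (\ref{PuppeSeq}) and yields the desired long exact sequence.

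The inclusion $\op{Im}\subseteq\op{ker}$ follows from the very definition of a homotopy fiber sequence. Indeed, in the model square $(K,X,C,Y)$ the composite $K\to X\to Y$ factors as $K\to C\to Y\,;$ since $C$ is acyclic, $\zg(C\to Y)=\zg(0\to Y)$ in $\tt Ho(M)\,$, hence $\zg(q)\circ\zg(\pi)=0\,$ and $[A,\zg q]\circ[A,\zg\pi]=0\,.$

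For the reverse inclusion, the strategy is to replace the homotopy fiber sequence by a \emph{strict} one. Using the ambient indeterminacy results of Section \ref{ModHoPull} (Theorem \ref{IndHoPull}, Corollary \ref{CorLur} and Proposition \ref{ComPara}), one constructs a weakly equivalent square $(K',X',C',Y')$ in which $q':X'\twoheadrightarrow Y'$ is a fibration between fibrant objects, $C'=0\,$ and $K'=X'\times_{Y'}0$ is the strict pullback. Weak equivalences induce bijections on $\tt Ho(M)$--Hom--sets, so the claim reduces to the strict model. Let $\tilde A\stackrel{\sim}{\to}A$ be a cofibrant replacement. A class in $\ker[A,\zg q']$ is represented by a morphism $g:\tilde A\to X'$ such that $q'\circ g$ is left--homotopic to the zero map $\tilde A\to 0\to Y'\,$ via some cylinder $\op{Cyl}(\tilde A)\,.$ Since the end inclusion $\tilde A\hookrightarrow\op{Cyl}(\tilde A)$ is a trivial cofibration and $q'$ is a fibration, the lifting axiom produces a homotopy $\tilde H:\op{Cyl}(\tilde A)\to X'$ starting at $g\,.$ Its other endpoint $g'$ satisfies $q'\circ g'=0$ and thus factors uniquely through $K'=X'\times_{Y'}0\,$; moreover $[g']=[g]$ in $[A,X']$ via $\tilde H\,.$ Hence $[g]\in\op{Im}[A,\zg\pi']\,.$

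The main obstacle is the reduction to the strict pullback model. One must verify that the strict replacement $(K',X',C',Y')$ can indeed be chosen with $q'$ a fibration between fibrant objects and $K'=X'\times_{Y'}0$ a strict fiber, and that this replacement is compatible with the given homotopy fiber sequence via a weak equivalence of squares. In the general (non strongly proper) case this requires the hypothesis that the terms of $\cP_f$ are objectwise fibrant, which is guaranteed by Proposition \ref{Puppe} when $f\in\tt M^{\scriptscriptstyle\to}_{\op{f}}\,;$ in the strongly proper case the same strict replacement and lifting argument applies without the objectwise fibrancy hypothesis, because Corollary \ref{CorLur} and Theorem \ref{RightProper} allow the strict pullback to serve as a model of the homotopy pullback in full generality. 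Once the replacement step is secured, the rest is the standard homotopy--lifting argument recorded above.
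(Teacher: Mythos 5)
Your plan matches the paper's: the paper isolates the three-term exactness as a separate result (Theorem \ref{HFSES}) and derives the long exact sequence from it, and the proof of the three-term case proceeds exactly as you sketch — reduce to a strict kernel-of-a-fibration sequence, cofibrantly replace the source, then run the homotopy-lifting argument through a cylinder.

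The one genuine gap is in your reduction to the strict model, which you flag as ``the main obstacle'' but do not actually close. The tools you cite — Theorem \ref{IndHoPull}, Corollary \ref{CorLur}, Proposition \ref{ComPara} — allow you to recognize or transport the \emph{property} of being a model square, but they do not by themselves produce the zigzag of weak equivalences of commutative squares that connects the given homotopy fiber sequence $K\to X\to Y$ (with implicit acyclic $C$) to a strict one $K'\to X'\twoheadrightarrow Y'$ with $C'=0$ and $K'=X'\times_{Y'}0$. The paper obtains this zigzag from the equivalence theorem: it builds the strict sequence $k: K\to\bar F\cX\twoheadrightarrow\tilde F\mathfrak{X}$, observes that its restriction $k_1$ is connected to $g$ by a weak equivalence $w$ of ${\tt M}^\s$, and then invokes Corollary \ref{EquivHoCatSingle} and Proposition \ref{Explicit}, which give the explicit zigzag \eqref{SeqWeqHFS} of objectwise weak equivalences in ${\tt h(M)}$. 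From that zigzag one reads off the commuting ${\tt Set}$--diagram with vertical bijections, and one still has to check that those bijections send the distinguished zero class to the distinguished zero class — a short verification the paper carries out explicitly (it is needed because the zigzag contains formal inverses). So the fix is to replace your appeal to the Section \ref{ModHoPull} indeterminacy results with an appeal to Proposition \ref{Explicit} (or Corollary \ref{EquivHoCatSingle}), and to add the ``bijections preserve zero'' check.

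A small cosmetic point on the $\op{Im}\subseteq\ker$ step: the equation $\zg(C\to Y)=\zg(0\to Y)$ is not well-typed, since the two morphisms have different sources. What you want is $\zg(X\to C\to Y)=\zg(X\to 0\to Y)$, which holds because $\zg(0\to C)$ is an isomorphism inverse to $\zg(C\to 0)$. In the paper this inclusion falls out for free after the reduction, since in the strict model the composite $K\to\bar F\cX\to\tilde F\mathfrak{X}$ is literally the zero morphism.
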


We will show later that when we apply this result to chain complexes of modules, we get the usual long exact sequence of homology modules (since our theory is an abstraction of aspects of pointed topological spaces, it is not surprising that in this case we obtain the usual long exact sequence of homotopy groups). Further we will explain what an exact sequence of $\op{Hom}_{\tt Ho(M)}$--sets is after giving the next proposition of which Proposition \ref{LESH} is obviously a corollary.

\begin{theo}\label{HFSES}
Let $X\stackrel{f}{\longrightarrow}\cX\stackrel{g}{\longrightarrow}\mathfrak{X}$ be a homotopy fiber sequence in $\,\tt M$ and let $A$ be an object of $\,\tt M\,.$ Then the sequence
\be\label{ESH}
[A,X]\stackrel{[A,\zg f]}{\longrightarrow}[A,\cX]\stackrel{[A,\zg g]}{\longrightarrow}[A,\mathfrak{X}]
\ee
is an exact sequence of $\,\op{Hom}_{\tt Ho(M)}$--sets.
\end{theo}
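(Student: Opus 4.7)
The strategy is to reduce to a strict pullback along a fibration and then perform a classical homotopy-lifting argument.

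First, I would use Theorem~\ref{IndHoPullMod} and Proposition~\ref{ComPara} to replace the model square $(X,\cX,C,\mathfrak{X})$ by an objectwise weakly equivalent model square $(X',\cX',C',\mathfrak{X}')$ in which $\cX',\mathfrak{X}',C'$ are fibrant, the map $g':\cX'\twoheadrightarrow\mathfrak{X}'$ is a fibration, $C'$ is acyclic fibrant, and $X'=\cX'\times_{\mathfrak{X}'}C'$ is the ordinary pullback. Weak equivalences become isomorphisms under $\gamma$, so the Hom-sets $[A,X]$, $[A,\cX]$, $[A,\mathfrak{X}]$ and the three maps between them are unchanged by this substitution. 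Replacing $A$ by a cofibrant replacement $QA$, one may then compute $[A,Z]=\pi(QA,Z)$, i.e.\ homotopy classes, for each fibrant $Z$. From now on the primes are dropped.

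For the first inclusion (image $\subseteq$ kernel), commutativity of the square gives $g\circ f=(C\to\mathfrak{X})\circ(X\to C)$, which factors through $C$. Since $\gamma C\cong 0$ in $\tt Ho(M)$, the morphism $\gamma g\circ\gamma f$ is the basepoint of $[X,\mathfrak{X}]$, hence $[A,\gamma g]\circ[A,\gamma f]$ is constant at the basepoint of $[A,\mathfrak{X}]$.

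For the reverse inclusion, let $\alpha:QA\to\cX$ represent a class whose image in $[A,\mathfrak{X}]$ is the basepoint, so that $g\circ\alpha$ is left-homotopic to the zero morphism $0_\mathfrak{X}:QA\to 0\to\mathfrak{X}$. Choose a cylinder object $QA\amalg QA\rightarrowtail\operatorname{Cyl}(QA)\xrightarrow{\sim}QA$ and a homotopy $H:\operatorname{Cyl}(QA)\to\mathfrak{X}$ from $g\circ\alpha$ to $0_\mathfrak{X}$. Since $QA$ is cofibrant, the endpoint inclusion $i_0:QA\to\operatorname{Cyl}(QA)$ is a trivial cofibration, and the lifting axiom applied to the fibration $g$ produces $\tilde H:\operatorname{Cyl}(QA)\to\cX$ with $\tilde H\circ i_0=\alpha$ and $g\circ\tilde H=H$. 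Setting $\alpha':=\tilde H\circ i_1$ yields $\alpha'\sim\alpha$ together with the strict identity $g\circ\alpha'=0_\mathfrak{X}=(C\to\mathfrak{X})\circ\tau_0$, where $\tau_0:QA\to 0\to C$. The pair $(\alpha',\tau_0)$ is now strictly compatible over $\mathfrak{X}$, so the universal property of $X=\cX\times_\mathfrak{X}C$ furnishes $\beta:QA\to X$ with $f\circ\beta=\alpha'$. Hence $[A,\gamma f]([\beta])=[\alpha]$ in $[A,\cX]$.

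The most delicate step is the initial reduction, where one must obtain simultaneously a strict pullback, a fibration $g$, fibrancy of all vertices, and acyclicity of the lower-left vertex, while ensuring that the comparison maps preserve the classes being manipulated. Theorems~\ref{IndHoPull}--\ref{IndHoPullMod} together with Proposition~\ref{ComPara} are tailored for exactly this; once the model is in hand, only the classical covering-homotopy lifting along $g$ intervenes.
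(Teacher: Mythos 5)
Your proof is correct in essence, but it takes a genuinely different route to the reduction step than the paper. The paper's proof invokes the full strength of Corollary~\ref{EquivHoCatSingle}/Proposition~\ref{Explicit} (the fully-faithfulness of $\op{Ho}(R_1)$) to manufacture a zigzag of objectwise weak equivalences of homotopy fiber sequences connecting the given $x$ to a normal form $k\colon K\to\bar F\cX\twoheadrightarrow\tilde F\mathfrak{X}$ in which $K$ is a literal kernel, and then must spend a paragraph checking that the resulting $\tt Ho(M)$--isomorphisms respect the basepoints of the Hom-sets (this is delicate precisely because their comparison is a $\tt Ho(M)$--zigzag, not a single $\tt M$--map). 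You instead build the replacement by hand from the model-square calculus (apply $R$, factor $R\cX\to R\mathfrak{X}$, pull back, use Theorem~\ref{IndHoPullMod} to see that the universal morphism out of $RX$ is a weak equivalence), which produces a \emph{single} objectwise weak equivalence of squares into a strict pullback along a fibration between fibrant objects; this makes basepoint-preservation of the comparison bijections automatic, because the comparison maps are images under $\gamma$ of honest $\tt M$--morphisms, so that $\gamma\phi_\cX\circ 0_{[A,\cX]}=\gamma(A\to 0\to\cX')=0_{[A,\cX']}$ on the nose. After the reduction, both proofs run the same classical covering-homotopy lifting argument along the fibration $g$ using a cylinder on the cofibrant replacement of $A$. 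Two small points you should spell out: first, the basepoint-preservation just described, which you leave implicit; second, in the ``image $\subseteq$ kernel'' direction your composite $X'\to\cX'\to\mathfrak{X}'$ factors through the acyclic (not literally zero) object $C'$, so you need the one-line observation that $\gamma(X'\to C'\to\mathfrak{X}')=\gamma(X'\to 0\to\mathfrak{X}')$, e.g.\ by inserting the isomorphism $\gamma(0\to C')$ and its inverse and using uniqueness of maps out of $0$ — whereas the paper avoids this by normalizing to a strict kernel, where the composite is the zero morphism of $\,\tt M$ already. Your approach is more elementary and independent of the paper's equivalence theorem; the paper's has the advantage of slotting directly into their general machinery and of extending immediately to the long exact sequence of Propositions~\ref{LESH} and~\ref{LESHGen}.
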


Now as to the meaning of exactness, let $f:X\to\cX$ be an $\tt M$--morphism and let $A$ be an object of $\tt M\,.$ Then $\zg f:X\to\cX$ is the $\tt Ho(M)$--morphism $$\zg f=[\tilde F\tilde C f]_{\simeq}\in[X,\cX]\;,$$ where $\tilde F$ is a local fibrant C-replacement, where $\tilde C$ is a local cofibrant F-replacement and where $[-]_{\simeq}$ denotes the homotopy class. Further $[A,\zg f]:[A,X]\to [A,\cX]$ is the set-theoretical morphism $$[A,\zg f]=[\tilde F\tilde C f]_{\simeq}\circ -\;\;.$$ We define $\op{im}[A,\zg f]$ as the set of images and $\ker[A,\zg f]$ as the set of elements that are sent to $0_{[A,\cX]}\,,$ where $0_{[A,\cX]}$ is the image under $\zg$ of the zero morphism $0_{A\cX}:A\stackrel{t_A}{\to}0\stackrel{i_\cX}{\to}\cX\;:$ $$0_{[A,\cX]}=\zg(0_{A\cX})=\zg(i_\cX)\circ\zg(t_A)=[0\stackrel{\tilde F\tilde Ci_\cX}{\longrightarrow} \tilde F\tilde C \cX]_\simeq\circ[\tilde F\tilde C A\stackrel{\tilde F\tilde C t_A}{\longrightarrow}0]_\simeq=[\tilde F\tilde C A\to 0\to \tilde F\tilde C\cX]_\simeq\;.$$ The sequence \eqref{ESH} is exact now means that $$\op{im}[A,\zg f]=\ker[A,\zg g]\;.$$

\begin{proof}
Let $\tilde F\cX\stackrel{\sim}{\to}\bar F\cX\twoheadrightarrow \tilde F\mathfrak{X}$ be a factorization of a lifting $\tilde F g:\tilde F \cX\to \tilde F\mathfrak{X}$ into a weak equivalence followed by a fibration and let $K\to \bar F\cX\twoheadrightarrow \tilde F\mathfrak{X}$ be the kernel of this fibration. Since $K=\bar F\cX\times_{\tilde F\mathfrak{X}}0\,,$ we have two homotopy fiber sequences
\be\label{}
\begin{tikzcd}[back line/.style={densely dotted}, row sep=1em, column sep=1em]
x:\;&X\ar[r,"f"]&\cX\ar[r,"g"]\ar[d,"\sim"]&\mathfrak{X}\ar[d,"\sim"]\\
k:\;&K\ar[r,"\zk"]&\bar F\cX\ar[r,two heads,"h"]&\tilde F\mathfrak{X}
\end{tikzcd}
\ee
whose restrictions $x_1$ and $k_1$ are related by a weak equivalence $w=(w_0,w_1)$ in ${\tt M}^\s$ and an isomorphism $\xi:=\zg_{{\tt M}^\s}(w)$ in ${\tt Ho}({\tt M}^\s)\,.$ For later use, we note that all of the nodes in the lower sequence are fibrant. \medskip

In view of Proposition \ref{Explicit}, the homotopy fiber sequences $x$ and $k$ are related by a zigzag \eqref{SeqWeqHFS} of weak equivalences of homotopy fiber sequences, i.e., by a zigzag of commutative cubes that are objectwise weak equivalences (hence the corresponding vertices of $x$ and $k$ are all related by a zigzag of weak equivalences of $\tt M$). If we apply $\zg$ and $[A,-]$ to these cubes, we get in particular the following commutative squares in the category $\tt Set$ of sets
\begin{equation}\label{CSq1}
\begin{tikzpicture}
 \matrix (m) [matrix of math nodes, row sep=3em, column sep=3em]
   {\left[A,X\right]  & \left[A,\cX\right] & \left[A,\mathfrak{X}\right] \\
   \left[A,K\right] & \left[A,\bar F\cX\right] & \left[A,\tilde F\mathfrak{X}\right]  \\ };
 \path[->]
 (m-1-1) edge node[auto] {\small{$[A,\zg f]$}} (m-1-2)
 (m-1-2) edge node[auto] {\small{$[A,\zg g]$}} (m-1-3)
 (m-1-1) edge node[auto] {\small{$b_1$}} (m-2-1)
 (m-1-2) edge node[auto] {\small{$b_2$}} (m-2-2)
 (m-1-3) edge node[auto] {\small{$b_3$}} (m-2-3)
 (m-2-1) edge node[auto] {\small{$[A,\zg\zk]$}} (m-2-2)
 (m-2-2) edge node[auto] {\small{$[A,\zg h]$}} (m-2-3);
\end{tikzpicture}
\end{equation}
whose vertical arrows are bijections. More precisely, as said above, the corresponding vertices $X$ and $K$, for example, are related by a zigzag of weak equivalences $\stackrel{\zw}{\longrightarrow}$ and weak equivalences $\stackrel{\zvw}{\longleftarrow}\,.$ When we apply $\zg$ we get a zigzag of isomorphisms $$\zg(\stackrel{\zw}{\longrightarrow})=[\tilde F\tilde C\stackrel{\zw}{\longrightarrow}]_\simeq\quad\text{and}\quad\zg(\stackrel{\zvw}{\longleftarrow})=[\tilde F\tilde C\stackrel{\zvw}{\longleftarrow}]_\simeq\;,$$
so that the composite $i_1$ of the $$[\tilde F\tilde C\stackrel{\zw}{\longrightarrow}]_\simeq\quad\text{and}\quad[\tilde F\tilde C\stackrel{\zvw}{\longleftarrow}]^{-1}_\simeq$$ is an isomorphism in $\tt Ho(M)$ from $X$ to $K\,.$ The bijection $b_1$ is now $b_1=[A,i_1]\,.$ As a map between fibrant-cofibrant objects is a weak equivalence if and only if it is a homotopy equivalence, the $\tilde F\tilde C\stackrel{\zvw}{\longleftarrow}$ are invertible up to homotopy by weak equivalences $\stackrel{W}{\longrightarrow}$ and $$[\tilde F\tilde C\stackrel{\zvw}{\longleftarrow}]^{-1}_\simeq=[\stackrel{W}{\longrightarrow}]_\simeq\;.$$ Therefore $i_1$ is a composite of homotopy classes of weak equivalences, so it is the homotopy class of a weak equivalence: $$i_1=[\tilde F\tilde C X\stackrel{\sim}{\to}\tilde F\tilde C K]_\simeq\;.$$ It follows that $$b_1 0_{[A,X]}=[\tilde F\tilde C X\stackrel{\sim}{\to}\tilde F\tilde C K]_\simeq\circ[\tilde F\tilde C A\to 0\to \tilde F\tilde C X]_\simeq=[\tilde F\tilde C A\to 0\to\tilde F\tilde C K]_\simeq=0_{[A,K]}\;,$$ i.e., that the bijections $b_1,b_2$ and $b_3$ preserve the zero elements. \medskip

Moreover, for every object $A\in\tt M\,,$ there is a cofibrant object $\cA\in{\tt M}_{\op{c}}$ and a weak equivalence $\zw:\cA\stackrel{\sim}{\to}A$ of $\tt M\,.$ If we apply $\zg$ we get an isomorphism $i:\cA\stackrel{\cong}{\to}A$ of $\tt Ho(M)$ given by $$i=\zg\zw=[\tilde F\tilde C\zw]_\simeq=[\tilde F\tilde C\cA\stackrel{\sim}{\to}\tilde F\tilde C A]_\simeq\;,$$ and for every object $B\in\tt M$ we get a bijection $$b_B:=[i,B]=-\circ i:[A,B]\stackrel{\cong}{\to}[\cA,B]$$ that sends $0_{[A,B]}$ to
$$
b_B0_{[A,B]}=[\tilde F\tilde C A\to 0\to\tilde F\tilde C B]_\simeq\circ[\tilde F\tilde C \cA\stackrel{\sim}{\to}\tilde F\tilde C A]_\simeq=0_{[\cA, B]}\;.
$$
Hence we have the following commutative $\tt Set$--squares
\begin{equation}\label{CSq2}
\begin{tikzpicture}
 \matrix (m) [matrix of math nodes, row sep=3em, column sep=3em]
   {\left[A,K\right]  & \left[A,\bar F\cX\right] & \left[A,\tilde F\mathfrak{X}\right] \\
   \left[\cA,K\right] & \left[\cA,\bar F\cX\right] & \left[\cA,\tilde F\mathfrak{X}\right] \\ };
 \path[->]
 (m-1-1) edge node[auto] {\small{$[A,\zg \zk]$}} (m-1-2)
 (m-1-2) edge node[auto] {\small{$[A,\zg h]$}} (m-1-3)
 (m-1-1) edge node[auto] {\small{$b_K$}} (m-2-1)
 (m-1-2) edge node[auto] {\small{$b_{\bar F \cX}$}} (m-2-2)
 (m-1-3) edge node[auto] {\small{$b_{\tilde F\mathfrak{X}}$}} (m-2-3)
 (m-2-1) edge node[auto] {\small{$[\cA,\zg\zk]$}} (m-2-2)
 (m-2-2) edge node[auto] {\small{$[\cA,\zg h]$}} (m-2-3);
\end{tikzpicture}
\end{equation}
As for the commutativity of these squares, note that if for instance $[\psi]_\simeq\,\in[A,K]\,,$ then $$[\cA,\zg\zk]\big(b_K[\psi]_\simeq\big)=\zg\zk\circ\big([\psi]_\simeq\circ i\big)=\big(\zg\zk\circ[\psi]_\simeq\big)\circ i=b_{\bar F\cX}\big([A,\zg\zk][\psi]_\simeq\big)\;.$$ If we combine \eqref{CSq1} and \eqref{CSq2}, we get commutative squares
\begin{equation}\label{CSq3}
\begin{tikzpicture}
 \matrix (m) [matrix of math nodes, row sep=3em, column sep=3em]
   {\left[A,X\right]  & \left[A,\cX\right] & \left[A,\mathfrak{X}\right] \\
   \left[\cA,K\right] & \left[\cA,\bar F\cX\right] & \left[\cA,\tilde F\mathfrak{X}\right] \\ };
 \path[->]
 (m-1-1) edge node[auto] {\small{$[A,\zg f]$}} (m-1-2)
 (m-1-2) edge node[auto] {\small{$[A,\zg g]$}} (m-1-3)
 (m-1-1) edge node[auto] {\small{$\flat_1$}} (m-2-1)
 (m-1-2) edge node[auto] {\small{$\flat_2$}} (m-2-2)
 (m-1-3) edge node[auto] {\small{$\flat_3$}} (m-2-3)
 (m-2-1) edge node[auto] {\small{$[\cA,\zg\zk]$}} (m-2-2)
 (m-2-2) edge node[auto] {\small{$[\cA,\zg h]$}} (m-2-3);
\end{tikzpicture}
\end{equation}
whose vertical arrows are bijections which respect the zero elements of their source and target $\op{Hom}_{\tt Ho(M)}$--sets. It is straightforward to check that the exactness of the upper sequence of \eqref{CSq3} is equivalent to the exactness of the lower sequence. For instance, if $$\op{im}[\cA,\zg\zk]\subset\ker[\cA,\zg h]\;$$ and if $[\phi]_\simeq\,\in[A,X]\,,$ we have $$[A,\zg g]\big([A,\zg f][\phi]_\simeq\big)=\flat_3^{-1}\left([\cA,\zg h]\big([\cA,\zg \zk](\flat_1[\phi]_\simeq)\big)\right)=\flat_3^{-1}0_{[\cA, \tilde F\mathfrak{X}]}=0_{[A,\mathfrak{X}]}\;.$$ Hence, it suffices to show that the lower sequence is exact.\medskip

Since $\cA\in\tt M_{\op{c}}$ and $K,\bar F\cX,\tilde F\mathfrak{X}\in\tt M_{\op{f}}\,,$ the description of the $\op{Hom}_{\tt Ho(M)}$--sets in the lower sequence of \eqref{CSq3} can be simplified. Indeed, the map $$\zg:\op{Hom}_{\tt M}(\cA, K)\ni \mathfrak{f}\mapsto \zg\mathfrak{f}\in[\cA,K]$$ is surjective and induces a 1:1 correspondence \be\label{1:1CF}\tilde\zg:\op{Hom}_{\tt M}(\cA, K)/\simeq\;\ni [\mathfrak{f}]_\simeq\mapsto \zg\mathfrak{f}\in[\cA,K]\;,\ee so that \be\label{GaCF}\zg\mathfrak{f}=[\mathfrak{f}]_\simeq\;\ee if we identify the homotopy classes with the morphisms in the homotopy category \cite[Proposition 5.11]{DS}.\medskip

Hence every element of $\op{im}[\cA,\zg\zk]$ reads $[\cA,\zg\zk](\zg\mathfrak{f})$ and, since
$$
[\cA,\zg h]\big([\cA,\zg \zk](\zg\mathfrak{f})\big)=\zg(h\circ\zk\circ\mathfrak{f})=\zg(0_{K,\tilde{F}\mathfrak{X}}\circ\mathfrak{f})=\zg(0_{\cA,\tilde{F}\mathfrak{X}})=0_{[\cA,\tilde{F}\mathfrak{X}]}\;,
$$
we have
$$\op{im}[\cA,\zg\zk]\subset\ker[\cA,\zg h]\;.$$
Conversely, if $\zg\mathfrak{g}=[\mathfrak{g}]_\simeq\in[\cA,\bar F\cX]$ is an element of $\ker[\cA,\zg h]\,,$ we have $$[\cA,\zg h](\zg\mathfrak{g})=\zg(h\circ\mathfrak{g})=[h\circ\mathfrak{g}]_\simeq=0_{[\cA,\tilde F\mathfrak{X}]}=\zg(0_{\cA,\tilde F\mathfrak{X}})=[0_{\cA,\tilde F\mathfrak{X}}]_\simeq\;,$$ so that $h\circ\mathfrak{g}$ and $0:=0_{\cA,\tilde{F}\mathfrak{X}}$ are left homotopic, i.e., so that $(h\circ\mathfrak{g})\amalg 0:\cA\amalg\cA\to\tilde F\mathfrak{X}$ factors through a cylinder object $\op{Cyl}\cA$ of $\cA\,$. The cylinder object is a factorization $$\cA\stackrel{\zf_1,\zf_2}{\rightrightarrows}\cA\amalg\cA\stackrel{i}{\rightarrowtail}\op{Cyl}\cA\stackrel{w}{\stackrel{\sim}{\to}}\cA$$ of the fold map $\id_\cA\amalg\id_\cA:\cA\amalg\cA\to\cA$ into a cofibration $i$ and a weak equivalence $w\,,$ which means that $$w\circ i_1:=w\circ i\circ \zf_1=\id_\cA\quad\text{and}\quad w\circ i_2:=w\circ i\circ \zf_2=\id_\cA\;.$$ Since $\cA$ is cofibrant and cofibrations are closed under pushouts, the morphisms $\zf_1$ and $\zf_2$ are cofibrations and so are the morphisms $i_1$ and $i_2\,,$ which are obviously also weak equivalences. The factorization of $(h\circ\mathfrak{g})\amalg 0$ mentioned above now means that there is a morphism $H:\op{Cyl}\cA\to \tilde{F}\mathfrak{X}$ such that $H\circ i_1=h\circ\mathfrak{g}$ and $H\circ i_2=0\,,$ so that we have the commutative squares
\begin{equation}
\begin{tikzcd}
\cA\arrow[r,"i_1",tail]\arrow[r,tail,swap,"\sim"]\arrow[d,"\mathfrak{g}"]&\op{Cyl}\cA\arrow[dl,dashed,swap,"\ell"]\arrow[d,"H"]&\cA\arrow[l,tail,"\sim"]\arrow[l,tail,swap,"i_2"]\arrow[d]\\
\bar F\cX\arrow[r,two heads,"h"]&\tilde{F}\mathfrak{X}&\arrow[l]0
\end{tikzcd}
\end{equation}
The dashed arrow $\ell$ exists in view of the lifting axiom. Since $h\circ\ell\circ i_2=0\,,$ the morphism $\ell\circ i_2$ factors through the kernel $(K,\zk)$ of $h\,,$ which means that there is a morphism $\mathfrak{f}:\cA\to K$ such that $\ell\circ i_2=\zk\circ\mathfrak{f}\,.$ Hence we have the commutative diagram
\begin{equation}
\begin{tikzcd}
\cA\arrow[r,"i_1",tail]\arrow[r,tail,swap,"\sim"]\arrow[d,"\mathfrak{g}"]&\op{Cyl}\cA\arrow[d,dashed,"\ell"]&\cA\arrow[l,tail,"\sim"]\arrow[l,tail,swap,"i_2"]\arrow[d,"\mathfrak{f}"]\\
\bar F\cX\arrow[r,equal]&\bar{F}\cX&\arrow[l,"\zk"]K
\end{tikzcd}
\end{equation}
This means that $\ell$ is a homotopy between $\mathfrak{g}$ and $\zk\circ\mathfrak{f}\,:$ $$\zg\mathfrak{g}=[\mathfrak{g}]_\simeq = [\zk\circ\mathfrak{f}]_\simeq=\zg\zk\circ\zg\mathfrak{f}=[\cA,\zg\zk](\zg\mathfrak{f})\;,$$ i.e., $$\ker[\cA,\zg h]\subset\op{im}[\cA,\zg\zk]\;.$$
\end{proof}

We denote $\tt h(M_{\op{f}})$ the full subcategory of $\tt h(M)$ made of the objectwise fibrant homotopy fiber sequences of $\tt M\,.$ The category $\tt h(M_\dagger)$ is the category $\tt h(M_{\op{f}})$ except in the strongly proper case where it is the category $\tt h(M)\,.$ Proposition \ref{LESH} shows that we can associate a long exact sequence to every $g\in{\tt M}^\s_\dagger\,,$ i.e., to every $k\in\tt h(M_\dagger)$ of the type $k: K_g\stackrel{\zp_g}{\longrightarrow}\cX\stackrel{g}{\longrightarrow}\mathfrak{X}\,.$ It is also possible to associate a long exact sequence to an arbitrary $x\in\tt h(M_\dagger)\,:$

\begin{prop}\label{LESHGen}
Let $x: X\stackrel{f}{\longrightarrow}\cX\stackrel{g}{\longrightarrow}\mathfrak{X}$ be a homotopy fiber sequence $x\in\tt h(M_\dagger)$ and let $A\in\tt M\,.$ Then there is a connecting morphism $\zD\in[\zW\mathfrak{X},X]$ such that
\be\label{PuppeSeq}
\cdots\longrightarrow [A,\zW^2 \mathfrak{X}]\stackrel{[A,\mathbb{R}\zW(\zD)]}{\longrightarrow}[A,\zW X]\stackrel{[A,\zg(\zW f)]}{\longrightarrow}[A,\zW \cX]\stackrel{[A,\zg(\zW g)]}{\longrightarrow}
\ee
\be
[A,\zW \mathfrak{X}]\stackrel{[A,\zD]}{\longrightarrow}[A,X]\stackrel{[A,\zg f]}{\longrightarrow} [A,\cX]\stackrel{[A,\zg g]}{\longrightarrow}[A,\mathfrak{X}]\;
\ee
is a long exact sequence of $\op{Hom}_{\tt Ho(M)}$--sets.
\end{prop}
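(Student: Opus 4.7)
The strategy is to reduce Proposition \ref{LESHGen} to Proposition \ref{LESH} applied to the second arrow $g$ of $x\,,$ which belongs to ${\tt M}^\s_\dagger$ under the hypothesis $x\in{\tt h(M_\dagger)}\,.$ Puppe's construction produces the homotopy kernel sequence $k:\;K_g\stackrel{\zp_g}{\to}\cX\stackrel{g}{\to}\mathfrak{X}\,,$ which is an (objectwise fibrant, in the general case) homotopy fiber sequence with $R_1(k)=g=R_1(x)\,.$ By Corollary \ref{EquivHoCatSingle} together with the $\tt h(M)$--version of Proposition \ref{Explicit} (as invoked in the proof of Corollary \ref{PresHoFibSeq}), there is a canonical isomorphism in $\tt Ho(h(M))$ between $x$ and $k$ extending the identity class $[\id_g]\,.$ Concretely, this is an explicit zigzag of objectwise weak equivalences of homotopy fiber sequences linking $x$ and $k\,,$ whose component at the fiber vertex yields an isomorphism $\iota:X\stackrel{\cong}{\to}K_g$ in $\tt Ho(M)\,,$ while the commutativity of the zigzag cubes forces $\zg(\zp_g)\circ\iota=\zg(f)$ in $\tt Ho(M)\,.$

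Next I would set $\zD:=\iota^{-1}\circ\zg(\zd_g)\in[\zW\mathfrak{X},X]$ and transport the Puppe long exact sequence of $g$ along $\iota$ and its loop iterates. Since $\zW$ preserves weak equivalences between fibrant objects by Theorem \ref{Preservation}, it admits a derived functor $\mathbb{R}\zW:{\tt Ho(M)}\to{\tt Ho(M)}\,,$ and $\iota$ induces isomorphisms $(\mathbb{R}\zW)^n(\iota):\zW^n X\stackrel{\cong}{\to}\zW^n K_g$ in $\tt Ho(M)$ for every $n\ge 0\,.$ Combined with the functorial identities $\mathbb{R}\zW(\iota)\circ\mathbb{R}\zW(\zD)=\mathbb{R}\zW(\zg\zd_g)=\zg(\zW\zd_g)$ and $\mathbb{R}\zW(\zg\zp_g)\circ\mathbb{R}\zW(\iota)=\zg(\zW f)\,,$ together with their iterates, the Puppe exact sequence for $g$ and the candidate sequence for $x$ fit into a commutative ladder in $\tt Set$ after application of $[A,-]\,,$ whose vertical arrows are the bijections $[A,(\mathbb{R}\zW)^n(\iota)]\,.$ Because each $(\mathbb{R}\zW)^n(\iota)$ is a composite of images under $\zg$ of weak equivalences and formal inverses of weak equivalences, the zero-preservation argument following the squares \eqref{CSq3} in the proof of Theorem \ref{HFSES} applies and shows that these bijections respect the zero elements of the $\op{Hom}_{\tt Ho(M)}$--sets. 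Exactness of the bottom row, granted by Proposition \ref{LESH}, therefore transfers to exactness of the top row.

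The main obstacle is coherence: one must verify that the single zigzag comparing $x$ with $k$ in $\tt h(M)$ produces, under iterated $\zW\,,$ compatible isomorphisms in $\tt Ho(M)$ that simultaneously intertwine $\zD$ with $\zg(\zd_g)$ at every loop level. This coherence is guaranteed by Theorem \ref{EquivHoCat}: applying Puppe's functor $\cP$ to $R_1(x)=g$ yields a long extension of $x$ that is canonically $\tt Ho(\ell(M))$--isomorphic to $\cP_g$ itself (both extend $g$), so a single $\tt Ho(\ell(M))$--isomorphism controls all loop levels at once. Proposition \ref{Explicit} supplies its explicit description as a composite of classes of weak equivalences, which is precisely what validates the transfer of zero elements above, and independence of $\zD$ from the choice of zigzag follows from the canonicity clause of the same proposition.
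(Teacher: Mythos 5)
Your proposal follows essentially the same route as the paper's proof: reduce to Proposition \ref{LESH} applied to $g$, produce the homotopy kernel sequence $k:K_g\to\cX\to\mathfrak{X}$ sharing the restriction $R_1(k)=g=R_1(x)$, invoke the $\tt h(M)$--version of Proposition \ref{Explicit} to get a canonical $\tt Ho(h(M))$--isomorphism $\Xi$ between $k$ and $x$ extending $[\id_g]$, extract from it the isomorphism at the fiber vertex, set $\zD$ equal to the composition with $\zg(\zd_g)$, and transfer exactness along a ladder of bijections that respect zero elements. Your identity $\zD=\iota^{-1}\circ\zg(\zd_g)$ is the same as the paper's $\zD=\zve\circ\zg(\zd_g)$ with $\zve=\iota^{-1}$, and your observation that $\zg(\zp_g)\circ\iota=\zg(f)$ is exactly the paper's computation that the components of $\Xi$ at $\cX$ and $\mathfrak{X}$ are identities (established carefully via the formula $\xi=\op{Ho}(R_1)(\Xi)$).

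The one place you diverge is the treatment of coherence at higher loop levels. You invoke Puppe's functor $\cP$ and Theorem \ref{EquivHoCat} at the level of $\ell({\tt M})$ to produce a single $\tt Ho(\ell(M))$--isomorphism controlling all loop degrees at once. The paper does something more economical: after establishing the single commutative $\tt Ho(M)$--square built from $\zve$, $\zD$, $\zg(\zd_g)$ and the identities, it simply iterates the application of the derived functor $\mathbb{R}\zW:\tt Ho(M)\to Ho(M)$ (which exists by Theorem \ref{Fundamental0} because $\zW$ preserves weak equivalences between fibrant objects, and satisfies $\mathbb{R}\zW(\zg h)=\zg(\zW h)$ on morphisms between fibrants), then checks zero-preservation of $[A,\mathbb{R}\zW(\zve)]$ once. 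Functoriality of $\mathbb{R}\zW$ on $\tt Ho(M)$ automatically gives compatibility at every level, so the "coherence obstacle" you flag is not really an obstacle — no extension of $x$ itself to a long sequence is needed. Your heavier route is not wrong in spirit, but your phrase "a long extension of $x$" is imprecise: $\cP_g$ extends $g$, not $x$, and its degree-one homotopy fiber sequence is $k$ rather than $x$, so some additional identification step would have to be spelled out before Theorem \ref{EquivHoCat} can be applied in the way you describe. The iteration-of-$\mathbb{R}\zW$ argument sidesteps all of that.
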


\begin{rem}
\emph{If we do not work in a strongly proper environment and $x\in\tt h(M)$ is not necessarily objectwise fibrant, we can apply a fibrant replacement functor $R$ to $x$ and associate a long exact sequence to $Rx\in\tt h(M_{\op{f}})$ (see Proposition \ref{ComPara}).}

\end{rem}

\begin{proof}
Since $x\in\tt h(M)$ induces the above $k\in\tt h(M)$ and the restrictions $k_1=x_1=g\in\tt M^\s$ are related by the isomorphism $$\xi:=\id_{\tt Ho(M^\s)}\!g=\zG(\id_{\tt M^\s}\!g)=\zG(\id_{\tt M}{\cX},\id_{\tt M}\mathfrak{X})\in\op{Hom}_{\tt Ho(M^\s)}(k_1,x_1)\;,$$ where $\zG:=\zg_{\tt M^\s}\,,$ Proposition \ref{Explicit} implies that there is a canonical isomorphism $\Xi\in\op{Hom}_{\tt Ho(h(M))}($ $k,x)$ that extends $\xi$ and Equation \eqref{SeqWeqHFS} gives the $\tt h(M)$--zigzag
$$
\begin{tikzcd}
k\arrow[r,"\zw^k"]\arrow[r,swap,"\sim"]& \ff^k & f^{g}\ar[l,swap,"\zu^k"]\ar[l,"\sim"]\ar[r,"E(\id)"]\ar[r,swap,"\sim"]& f^{g}\ar[r,"\zu^x"]\ar[r,swap,"\sim"]&\ff^x&x\ar[l,swap,"\zw^x"]\ar[l,"\sim"]\;
\end{tikzcd}
$$
whose class is equal to $\Xi\,.$ All arrows of this zigzag are commutative cubes in $\tt M$ that are objectwise weak equivalences of $\tt M\,.$ In particular we have the following commutative $\tt M$--squares
$$
\begin{tikzcd}
&K_g\ar[r,"\zp_g"]\ar[d,"\sim"]\ar[d,swap,"\zw^k_2"]&\cX\ar[r,"g"]\ar[d,"\sim"]\ar[d,swap,"\zw^k_1"]&\mathfrak{X}\ar[d,"\sim"]\ar[d,swap,"\zw^k_0"] \\
&\mathfrak{F}^k_2\ar[r,"\mathfrak{f}^k_2"]&\mathfrak{F}^k_1\ar[r,"\mathfrak{f}^k_1"]&\mathfrak{F}^k_0\\
&F^g_2\ar[d,"\sim"]\ar[r,"f^g_2"]\ar[u,swap,"\sim"]\ar[u,"\zu^k_2"]&F^g_1\ar[d,"\sim"]\ar[u,swap,"\sim"]\ar[u,"\zu^k_1"]\ar[r,"f^g_1"]&F^g_0\ar[d,"\sim"]\ar[u,swap,"\sim"]
\ar[u,"\zu^k_0"] \\
&\vdots&\vdots&\vdots\\
&X\ar[r,"f"]\ar[u,swap,"\sim"]&\cX\ar[u,swap,"\sim"]\ar[r,"g"]&\mathfrak{X}\ar[u,swap,"\sim"]
\end{tikzcd}
$$
If we apply the functor $\zg=\zg_{\tt M}$ to them, we get commutative $\tt Ho(M)$--squares in which the weak equivalences have been transformed into isomorphisms of $\tt Ho(M)\,.$ By inverting the upward isomorphisms, we obtain the commutative $\tt Ho(M)$--diagram
\be\label{ReducedZigZag}
\begin{tikzcd}
&K_g\ar[r,"\zg(\zp_g)"]\ar[d,"\cong"]&\cX\ar[r,"\zg g"]\ar[d,"\cong"]&\mathfrak{X}\ar[d,"\cong"]\\
&X\ar[r,"\zg f"]&\cX\ar[r,"\zg g"]&\mathfrak{X}
\end{tikzcd}
\ee
in which the two last isomorphisms are equalities. Indeed, as $$\Xi=\digamma(\zw^x)^{-1}\circ\digamma(\zu^x)\circ\digamma(\zu^{k})^{-1}\circ\digamma(\zw^k)\;,$$ where $\digamma=\zg_{\tt h(M)}\,,$ we have
$$
\zG(\id_{\tt M}\cX,\id_{\tt M}\mathfrak{X})=\xi=\op{Ho}(R_1)(\Xi)=
$$
$$\op{Ho}(R_1)(\digamma(\zw^x)^{-1})\,\circ\,\op{Ho}(R_1)(\digamma(\zu^x))\,\circ\,\op{Ho}(R_1)(\digamma(\zu^{k})^{-1})\,\circ\,
\op{Ho}(R_1)(\digamma(\zw^k))=
$$
\be\label{IdentificationAsIds}
(\zG(R_1\zw^x))^{-1}\,\circ\,\zG(R_1\zu^x)\,\circ\,(\zG(R_1\zu^k))^{-1}\,\circ\,\zG(R_1\zw^k)\;.
\ee
As composition and weak equivalences of $\tt M^\s$ have been defined objectwise, applying $\zG$ means that we apply $\zg$ objectwise (a similar remark holds for $\tt h(M)$ and $\digamma$). Hence, the last row of \eqref{IdentificationAsIds} coincides with the two last columns of \eqref{ReducedZigZag}, which are therefore equalities as announced.\medskip

If we denote the left isomorphism in \eqref{ReducedZigZag} with $\zve$ and the composite morphism \be\label{ConMorzD}\zW\mathfrak{X}\stackrel{\zg(\zd_g)}{\longrightarrow}K_g\stackrel{\zve}{\longrightarrow}X\;,\ee with $\zD\,,$ we can add a corresponding square in Diagram \eqref{ReducedZigZag} on the left. If we apply the functor $[A,-]$ to this extended commutative $\tt Ho(M)$--diagram, we get the commutative $\tt Set$--diagram
\be\label{BasicExact}
\begin{tikzcd}
\left [A,\zW\mathfrak{X}\right ]\ar[r,"{[A,\zg(\zd_g)]}"]\ar[d,equal]&\left [A,K_g\right ]\ar[r,"{[A,\zg(\zp_g)]}"]\ar[d,swap,"{[A,\zve]}"]\ar[d,"\cong"]&\left [A,\cX\right ]\ar[r,"{[A,\zg g]}"]\ar[d,equal]&\left [A,\mathfrak{X}\right ]\ar[d,equal]\\
\left [A,\zW\mathfrak{X}\right ]\ar[r,"{[A,\zD]}"]&\left [A,X\right ]\ar[r,"{[A,\zg f]}"]&\left [A,\cX\right ]\ar[r,"{[A,\zg g]}"]&\left[A,\mathfrak{X}\right ]
\end{tikzcd}
\ee
On the other hand, since $\zW$ in the general case sends weak equivalences between fibrant objects to weak equivalences (resp., in the strongly proper case preserves all weak equivalences), the right derived functor $\mathbb{R}^{\op{K}}\zW\in\tt Fun(Ho(M),Ho(M))$ exists and satisfies
$$
\mathbb{R}^{\op{K}}\zW\circ\zg\doteq\zg\circ\zW\circ\tilde{F}\quad(\text{resp.,}\;\mathbb{R}^{\op{K}}\zW\circ\zg\doteq\zg\circ\zW)\;.
$$
In particular, if $h:Y\to Z$ is an $\tt M$--morphism between fibrant objects (resp., in the strongly proper case any $\tt M$--morphism), we get that
$$
\mathbb{R}\zW(\zg h):\mathbb{R}\zW(\zg Y)\to\mathbb{R}\zW(\zg Z)\quad\text{is given by}\quad\zg(\zW h):\zW Y\to\zW Z\;,
$$
where we omitted superscript $\op{K}.$ So if we apply first $\mathbb{R}\zW$ to the above-mentioned extended commutative $\tt Ho(M)$--diagram and then $[A,-]\,,$ we get the commutative $\tt Set$--diagram
\be\label{FollowUpExact}
\begin{tikzcd}
\left [A,\zW^2\mathfrak{X}\right ]\ar[rr,"{[A,\zg(\zW(\zd_g))]}"]\ar[d,equal]&&\left [A,\zW(K_g)\right ]\ar[rr,"{[A,\zg(\zW(\zp_g))]}"]\ar[d,swap,"{[A,\mathbb{R}\zW(\zve)]}"]\ar[d,"\cong"]&&\left [A,\zW\cX\right ]\ar[rr,"{[A,\zg(\zW g)]}"]\ar[d,equal]&&\left [A,\zW\mathfrak{X}\right ]\ar[d,equal]\\
\left [A,\zW^2 \mathfrak{X}\right ]\ar[rr,"{[A,\mathbb{R}\zW(\zD)]}"]&&\left [A,\zW X\right ]\ar[rr,"{[A,\zg(\zW f)]}"]&&\left [A,\zW\cX\right ]\ar[rr,"{[A,\zg(\zW g)]}"]&&\left[A,\zW\mathfrak{X}\right ]
\end{tikzcd}
\ee
We can of course iterate this approach. The upper rows (resp., the lower rows) of \eqref{BasicExact}, \eqref{FollowUpExact} and of the commutative diagrams obtained from the iteration are the long exact sequence (resp., the sequence) of Proposition \ref{LESH} (resp., of Proposition \ref{LESHGen}). From the proof of Theorem \ref{HFSES} we know that if the vertical bijections in \eqref{BasicExact}, \eqref{FollowUpExact}... respect the zero elements, then the sequence of Proposition \ref{LESHGen} is exact as well. We know from the same proof that the bijection $[A,\zve]$ respects the zero elements, as $\zve$ is the composite of images $\zg\zw$ of weak equivalences $\zw$ and inverses $\zg\zvw^{-1}$ of images of weak equivalences $\zvw\,.$ Since $\mathbb{R}\zW(\zve)$ is the composite of the $\zg(\zW\zw)$ and the $\zg(\zW\zvw)^{-1},$ so is the composite of images under $\zg$ of weak equivalences $\zW\zw$ and inverses of such images, the bijection $[A,\mathbb{R}\zW(\zve)]$ respects also the zero elements. This completes the proof.
\end{proof}

We close this section with the following comparison of different loop space functors.

\begin{prop}\label{Comparison}
Let $\tt M$ be a pointed model category, let $\op{Path}_0^a\,,$ $\op{Path}_0^b\,,$ $\op{Path}_0^c...$ be based path space functors in $\,\tt M$ and denote $\zW^a,$ $\zW^b,$ $\zW^c...$ the associated loop space functors. There exist canonical natural isomorphisms $\iota^{ba}:\mathbb{R}\zW^a\stackrel{\sim}{\Rightarrow}\mathbb{R}\zW^b$ which satisfy the cocycle condition $\iota^{cb}\circ\iota^{ba}=\iota^{ca}\,.$
\end{prop}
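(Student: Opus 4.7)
The plan is to reduce the statement to the canonicity of representatives of a full homotopy pullback, as developed in Section \ref{ModHoPull}. By Theorem \ref{Preservation}, each $\zW^x$ for $x\in\{a,b,c,\ldots\}$ preserves weak equivalences between fibrant objects (and, in the strongly proper case, all weak equivalences), so Theorem \ref{Fundamental0} guarantees the existence of $\mathbb{R}\zW^x\in\tt Fun(Ho(M),Ho(M))$ together with the canonical identification $\mathbb{R}\zW^x\circ\zg_{\tt M}\doteq\zg_{\tt M}\circ\zW^x\circ\tilde{F}$, where $\tilde{F}$ denotes the fixed local fibrant C-replacement.

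For the pointwise construction, fix $X\in{\tt M}$; the object $\tilde{F}X$ is fibrant, and for each $x$ the cospan
\begin{equation*}
0\longrightarrow \tilde{F}X\longleftarrow \op{Path}_0^x(\tilde{F}X)
\end{equation*}
has three fibrant nodes (the path space is fibrant because it fibres over the fibrant object $\tilde{F}X$) and a fibration on the right; it is furthermore weakly equivalent, via identities on $0$ and $\tilde{F}X$ together with the weak equivalence $0\stackrel{\sim}{\to}\op{Path}_0^x(\tilde{F}X)$ of Definition \ref{BPSF}, to the ``naked'' cospan $0\to\tilde{F}X\leftarrow 0$. Hence, by Theorem \ref{IndHoPull}, the standard pullback $\zW^x(\tilde{F}X)=\op{Path}_0^x(\tilde{F}X)\times_{\tilde{F}X}0$ is a canonical representative of the full homotopy pullback $0\times^h_{\tilde{F}X}0$. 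Since Corollary \ref{IndeterminacyHoLimTheo} links any two canonical representatives by a canonical zigzag of weak equivalences whose image under $\zg_{\tt M}$ is a well-defined $\tt Ho(M)$--isomorphism, I would define $\iota^{ba}_{\zg X}$ to be the resulting isomorphism $\mathbb{R}\zW^a(\zg X)\stackrel{\cong}{\to}\mathbb{R}\zW^b(\zg X)$.

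To verify naturality, given an $\tt M$--morphism $f:X\to Y$, the lift $\tilde{F}f:\tilde{F}X\to\tilde{F}Y$ from diagram \eqref{FTilde} combined with the functoriality of $\op{Path}_0^a$, $\op{Path}_0^b$ and of the standard pullback produces an $\tt M$--morphism between the two defining zigzags, so the corresponding naturality square commutes in $\tt Ho(M)$. Since every $\tt Ho(M)$--morphism is generated by images of $\tt M$--morphisms and formal inverses of weak equivalences, and naturality in the latter is automatic for isomorphisms, this yields a natural isomorphism $\iota^{ba}:\mathbb{R}\zW^a\stackrel{\sim}{\Rightarrow}\mathbb{R}\zW^b$.

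The cocycle identity $\iota^{cb}\circ\iota^{ba}=\iota^{ca}$ I would then establish by concatenating the canonical zigzags between $\zW^a(\tilde{F}X)$, $\zW^b(\tilde{F}X)$ and $\zW^c(\tilde{F}X)$ and observing that the result coincides with the direct zigzag from $\zW^a(\tilde{F}X)$ to $\zW^c(\tilde{F}X)$ in $\tt Ho(M)$. The main obstacle is to make precise that the $\tt Ho(M)$--isomorphism between two canonical representatives of a full homotopy pullback is independent of the particular zigzag chosen, so that both the definition of $\iota^{ba}$ and the compatibility above are unambiguous. I would address this by routing all pairwise comparisons through a common ``combined'' representative, namely the pullback of the cospan $0\to\tilde{F}X\leftarrow\op{Path}_0^x(\tilde{F}X)\times_{\tilde{F}X}\op{Path}_0^y(\tilde{F}X)$, which receives canonical fibrations from the three representatives involved and to which Proposition \ref{ComPara} together with Theorem \ref{IndHoPull} applies; this fixes the canonical $\tt Ho(M)$--isomorphism and makes the cocycle identity follow by inspection.
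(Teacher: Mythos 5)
Your strategy — reduce everything to canonicity of representatives of the full homotopy pullback $0\times^h_{\tilde{F}X}0$ — is genuinely different from the paper's, and the first two paragraphs are on solid ground. You also correctly flag the real difficulty: Section \ref{ModHoPull} only asserts that the full homotopy pullback is well-defined up to an \emph{isomorphism class}, not up to a \emph{canonical} isomorphism, so something more is needed before $\iota^{ba}_{\zg X}$ is well-defined and before the cocycle can even be stated.

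However, your proposed resolution of that obstacle does not work. The pullback of $0\to\tilde{F}X\leftarrow\op{Path}_0^x(\tilde{F}X)\times_{\tilde{F}X}\op{Path}_0^y(\tilde{F}X)$ is canonically isomorphic to the \emph{product} $\zW^xX\times\zW^yX$: an element of this iterated fibre product is a pair $(\alpha,\beta)\in\op{Path}_0^x(\tilde{F}X)\times\op{Path}_0^y(\tilde{F}X)$ with common endpoint equal to $0$, i.e., a pair of loops, one of each flavour. The canonical maps to $\zW^xX$ and $\zW^yX$ are then the product projections, which are fibrations (pullbacks of $\zW^yX\twoheadrightarrow 0$, respectively $\zW^xX\twoheadrightarrow 0$) but are weak equivalences only when the complementary factor is acyclic. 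So this object does not receive weak equivalences from the two representatives you need to compare and cannot serve as the hub for a canonical zigzag. (Maps in the other direction, such as $(\id,0):\zW^xX\to\zW^xX\times\zW^yX\,,$ fare no better.)

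A hub that does work, and stays entirely within the homotopy-pullback framework of Section \ref{ModHoPull}, is the naked cospan $\cC^{\mathrm{triv}}\tilde{F}X:=(0\to\tilde{F}X\leftarrow 0)\,.$ In one of the three Reedy structures $\zs_i$ on $\tt Fun(I,M)$ for which $\op{Lim}$ is right Quillen, a cospan is fibrant as soon as all three nodes are fibrant and the \emph{right} morphism is a fibration, so both $\cC^a\tilde{F}X$ and $\cC^b\tilde{F}X$ are fibrant and both receive an objectwise weak equivalence from $\cC^{\mathrm{triv}}\tilde{F}X\,;$ they are therefore both fibrant replacements of the \emph{same} cospan, and Theorem \ref{Fundamental0} together with Corollary \ref{IndeterminacyHoLimTheo} yields a canonical $\tt Ho(M)$--isomorphism $\zg(\zW^a\tilde{F}X)\cong\mathbb{R}_{\zs}\!\op{Lim}(\zg\,\cC^{\mathrm{triv}}\tilde{F}X)\cong\zg(\zW^b\tilde{F}X)\,.$ Routing every pairwise comparison through the one hub $\mathbb{R}_{\zs}\!\op{Lim}(\zg\,\cC^{\mathrm{triv}}\tilde{F}X)$ is also what makes the cocycle condition come for free, which the concatenation of \emph{ad hoc} zigzags in your last paragraph would not give you. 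You should also note that the naturality check requires Lemma \ref{NatTraMHoM} rather than a vague appeal to naturality being ``automatic'' for inverses.

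For comparison, the paper's proof avoids all of this by going through the equivalence $\op{Ho}(R_1):\tt Ho(\ell(M))\simeq Ho(M^\s)\,$: for any $g$ it considers the two Puppe extensions $\cP^a_{\tilde{F}g},\cP^b_{\tilde{F}g}\in\ell({\tt M})$ of the same morphism $\tilde{F}g\,,$ observes that their $R_1$--restrictions coincide, and reads off both the canonical isomorphism and the cocycle condition from Proposition \ref{Explicit} applied to $\xi=\id_{\tt Ho(M^\s)}(\tilde{F}g)\,.$ Your route, once fixed as above, is more elementary in that it only uses the homotopy-pullback machinery of Section \ref{ModHoPull}; the paper's route leverages the deeper Theorem \ref{EquivHoCat} and produces the natural isomorphism in all degrees of the Puppe sequence at once, so it requires no separate naturality or cocycle bookkeeping.
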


\begin{proof}
Let $g:\cX\to \mathfrak{X}$ be an $\tt M$--morphism, choose a lifting $\tilde F g:\tilde F \cX\to\tilde F \mathfrak{X}\,,$ consider Puppe's sequences $\cP_{\tilde F g}^a, \cP_{\tilde F g}^b\in\ell({\tt M})$ and observe that their restrictions in $\tt M^\s$ are related by the isomorphism $\xi:=\id_{\tt Ho(M^\s)}(\tilde F g)\,.$ If we proceed as in the proof of Proposition \ref{LESHGen} (but in the case of $\ell({\tt M})$ instead of $\tt h(M)$), we get the commutative $\tt Ho(M)$--diagram
$$
\begin{tikzcd}
\cdots\ar[r] &\zW^a(\tilde F\cX)\ar[r,"\zg(\zW^a(\tilde F g))"]\ar[d,"\cong"]&\zW^a(\tilde F\mathfrak{X})\ar[d,"\cong"]\ar[r,"\zg(\zd^a_{\tilde F g})"]&K^a_{\tilde F g}\ar[r,"\zg(\zp^a_{\tilde F g})"]\ar[d,"\cong"]&\tilde F\cX\ar[r,"\zg(\tilde F g)"]\ar[d,equal]&\tilde F\mathfrak{X}\ar[d,equal]\\
\cdots\ar[r] &\zW^b(\tilde F\cX)\ar[r,"\zg(\zW^b(\tilde F g))"]&\zW^b(\tilde F\mathfrak{X})\ar[r,"\zg(\zd^b_{\tilde F g})"]&K^b_{\tilde F g}\ar[r,"\zg(\zp^b_{\tilde F g})"]&\tilde F\cX\ar[r,"\zg(\tilde F g)"]&\tilde F\mathfrak{X}
\end{tikzcd}
$$
The degrees 3 and 4 part of this commutative diagram reads
$$
\begin{tikzcd}
\mathbb{R}\zW^a(\cX)\ar[r,"\mathbb{R}\zW^a(\zg g)"]\ar[d,"\cong"]\ar[d,swap,"\iota^{ba}_{\cX}"]&\mathbb{R}\zW^a(\mathfrak{X})\ar[d,"\cong"]\ar[d,swap,"\iota^{ba}_{\mathfrak{X}}"]\\
\mathbb{R}\zW^b(\cX)\ar[r,"\mathbb{R}\zW^b(\zg g)"]&\mathbb{R}\zW^b(\mathfrak{X})
\end{tikzcd}
$$
which proves the `canonical isomorphism' part of Proposition \ref{Comparison} (see Lemma \ref{NatTraMHoM}). The `cocycle condition' part is a direct consequence of Proposition \ref{Explicit}.
\end{proof}

\section{Comparison with Quillen's fibration sequences}\label{ComparisonQuillen}

Recall that in any model category $\tt M$ path objects are dual to cylinder objects: a path object of $X\in\tt M$ is an object $\op{Path}X\in\tt M$ together with a factorization $$X\stackrel{\sim}{\to}\op{Path}X\twoheadrightarrow X\times X$$ of the diagonal map $$\zD_X:=(\id_X,\id_X):X\to X\times X$$ into a weak equivalence followed by a fibration [we think of the first (resp., second) morphism of the factorization as the map which assigns to every point the constant path a this point (resp., to every path its start and end points)]. If {\it we fix a functorial factorization} $(\za,\zb)$ of the diagonal map and $f:X\to Y$ is a morphism, we get the commutative diagram
$$
\begin{tikzcd}
X\ar[r,"\za(\zD_X)"]\ar[r,swap,"\sim"]\ar[d,"f"]&\op{Path}X\ar[r,two heads,"\zb(\zD_X)"]\ar[d,dashed,"\op{Path}f"]& X\times X\ar[d,dashed,"f\times f"]\\
Y\ar[r,"\za(\zD_Y)"]\ar[r,swap,"\sim"]&\op{Path}Y\ar[r,two heads,"\zb(\zD_Y)"]&Y\times Y
\end{tikzcd}
$$
Indeed, if we denote $\zp_1$ and $\zp_2$ the projections out of $Y\times Y\,,$ there is a unique morphism $(f,f):X\to Y\times Y$ such that $$\zp_1\circ (f,f)=\zp_2\circ(f,f)=f\;.$$ Since $(f\times f)\circ\zD_X$ and $\zD_Y\circ f$ satisfy this condition, the total square commutes. It follows from the functoriality of the factorization that the arrow $\op{Path}f$ that makes the left and right squares commutative exists, and that $\op{Path}$ is an endofunctor of $\tt M\,.$ We refer to $\op{Path}$ as the {\bf path space functor} of $\,\tt M\,.$\medskip

Now let $\tt M$ be a pointed model category as in the preceding sections.\medskip

In Section \ref{SectionPuppe} we considered a {\bf based path space functor} $\op{Path}_0$ of $\,\tt M$ and the corresponding loop space functor $\zW$ of $\tt M\,.$ On fibrant objects $X\in\tt M_{\op{f}}$ the loop space $\zW X\in\tt M_{\op{f}}$ is the kernel of the fibration $\op{Path}_0X\twoheadrightarrow X\,$ and on $\tt M_{\op{f}}$--morphisms $f:X\to Y$ the $\tt M_{\op{f}}$--morphism $\zW f:\zW X\to \zW Y$ is the universal arrow
$$
\begin{tikzcd}
\zW X\ar[r,"k_X"]\ar[d,dashed,"\zW f"]&\op{Path}_0X\ar[r,two heads]\ar[d,"\op{Path}_0f"]&X\ar[d,"f"]\\
\zW Y\ar[r,"k_Y"]&\op{Path}_0Y\ar[r,two heads]&Y
\end{tikzcd}
$$

Quillen defines a loop space functor $\zW^Q$ of $\tt M$ from the path space functor $\op{Path}$ of $\tt M\,.$ On objects $X\in\tt M$ the loop space $\zW^QX\in\tt M_{\op{f}}$ is the kernel of the fibration $\op{Path}X\twoheadrightarrow X\times X$ and on $\tt M$--morphisms $f:X\to Y$ the $\tt M_{\op{f}}$--morphism $\zW^Qf:\zW^QX\to\zW^QY$ is the universal arrow
$$
\begin{tikzcd}
\zW^QX\ar[r,"\zk_X"]\ar[d,dashed,"\zW^Qf"]&\op{Path}X\ar[r,two heads]\ar[d,dashed,"\op{Path}f"]&X\times X\ar[d,dashed,"f\times f"]\\
\zW^QY\ar[r,"\zk_Y"]&\op{Path}Y\ar[r,two heads]&Y\times Y
\end{tikzcd}
$$
A non-obvious result is that for any $V\in\tt M_{\op{f}}$ the functor of points $$[-,\zW^QV]\in \tt Fun(Ho(M)^{\op{op}}, Set)$$ is valued in the category $\tt Grp$ of groups and that `accordingly' $\zW^QV$ is a group object of $\tt Ho(M)\,.$ Another non-trivial result is that if $K$ is the kernel of a fibration $U\twoheadrightarrow V$ between fibrant objects $U,V,$ there is an $\tt M$--morphism $\zr:\zW^QV\times_{\tt M} K\to K$ such that $\zg\zr:\zW^QV\times_{\tt Ho(M)} K\to K$ is an action of the group object $\zW^QV$ on $K\,.$

\begin{theo}\label{QLSF}
Let $\,\tt M$ be a pointed model category that is equipped with a path space functor $\op{Path}$ implemented by a fixed functorial factorization. Quillen's loop space functor $\zW^Q$ is a loop space functor in the sense of the present paper, i.e., a loop space functor associated to a based path space functor $\op{Path}^Q_0$ of $\,\tt M\,.$
\end{theo}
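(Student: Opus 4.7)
The plan is to exhibit a based path space functor $\op{Path}_0^Q$ in the sense of Definition \ref{BPSF} whose associated loop space functor coincides with Quillen's $\zW^Q$, by pulling back Quillen's path space along the basepoint. Write $i:0\to X$ for the canonical injection (suppressing the subscript when unambiguous) and $0:X\to X$ for the zero morphism, and define
$$\op{Path}_0^Q X\;:=\;\op{Path} X\times_{X\times X} X,$$
where the pullback is formed along $(0,\id_X):X\to X\times X$, with pullback projection $\zp^Q_X:\op{Path}_0^Q X\to X$. Since $\op{Path}$ is an endofunctor and pullbacks are functorial, $\op{Path}_0^Q$ is an endofunctor of $\tt M$ and $\zp^Q$ is a natural transformation $\op{Path}_0^Q\Rightarrow\id_{\tt M}$; moreover $\zp^Q_X$ is a fibration for every $X\in\tt M$, being a pullback of the fibration $\zb(\zD_X):\op{Path} X\twoheadrightarrow X\times X$.

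Next, I would verify acyclicity of $\op{Path}_0^Q X$ when $X$ is fibrant. The universal property of $X\times X$ identifies the defining pullback canonically with
$$\op{Path}_0^Q X\;\cong\;\op{Path} X\times_X 0,$$
where now the map $\op{Path} X\to X$ is the start-of-path projection $\zp_1:=p_1\circ\zb(\zD_X)$ and the map $0\to X$ is $i$. For fibrant $X$, the projection $p_1:X\times X\twoheadrightarrow X$ is a fibration (as pullback of $X\twoheadrightarrow 0$ along itself), hence $\zp_1$ is a fibration as well. The weak equivalence $\za(\zD_X):X\stackrel{\sim}{\to}\op{Path} X$ satisfies $\zp_1\circ\za(\zD_X)=p_1\circ\zD_X=\id_X$, so the 2-out-of-3 axiom forces $\zp_1$ to be a weak equivalence, hence a trivial fibration. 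Trivial fibrations being stable under pullback, the induced map $\op{Path}_0^Q X\twoheadrightarrow 0$ is a trivial fibration, which is precisely the acyclicity condition.

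It remains to identify $\zW_{\op{Path}_0^Q}$ with $\zW^Q$. By the pasting law for pullbacks,
$$\zW_{\op{Path}_0^Q} X\;=\;\op{Path}_0^Q X\times_X 0\;=\;(\op{Path} X\times_{X\times X}X)\times_X 0\;\cong\;\op{Path} X\times_{X\times X} 0,$$
where the map $0\to X\times X$ is the composite $(0,\id_X)\circ i=(i,i)=\zD_X\circ i$, which is exactly the basepoint inclusion whose pullback Quillen takes to define $\zW^Q X$ as the kernel of $\zb(\zD_X)$. On morphisms, both $\zW_{\op{Path}_0^Q} f$ and $\zW^Q f$ are the unique universal arrow between these pullbacks induced by $\op{Path} f$, so they coincide. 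I expect the main technical care to go into the canonical identification of the two pullback presentations of $\op{Path}_0^Q X$ and into checking that the map to $X$ playing the role of $\zp^Q_X$ agrees under this identification with the one used in the acyclicity argument; once that bookkeeping is in place, the rest is a clean assembly of 2-out-of-3 and pullback-stability of (trivial) fibrations.
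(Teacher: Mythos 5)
Your proposal is correct and pursues the same underlying strategy as the paper — pull back Quillen's path fibration $\zb(\zD_X)$ over the basepoint — but your choice of presentation gives two real simplifications. The paper defines $\op{Path}^Q_0 X$ as the kernel of $p_1 := \zp_1\circ\zb(\zD_X)$ and sets the natural transformation component to be the composite $\mathfrak{p}_2 := \zp_2\circ\mathfrak{k}_X$; it then proves that $\mathfrak{p}_2$ is a fibration by an explicit lifting argument against a trivial cofibration (using that $(0,\id_X)$ and $\mathfrak{k}_X$ are left cancellable), and identifies $\zW_{\op{Path}^Q_0}$ with $\zW^Q$ by constructing, in a large commutative diagram, a map $\ell_X:\zW^Q X\to\op{Path}^Q_0 X$ and verifying by hand via universal properties that $(\zW^Q X,\ell_X)$ is the kernel of $\mathfrak{p}_2$. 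You instead realize the same object as the pullback of $\zb(\zD_X)$ along $(0,\id_X):X\to X\times X$, so that $\zp^Q_X$ is literally a pullback of the fibration $\zb(\zD_X)$ and hence a fibration for every $X$ with no further work, and the identification $\zW_{\op{Path}^Q_0}X\cong\zW^Q X$ falls out of the pasting law for pullbacks, since $(0,\id_X)\circ i$ is the unique morphism $0\to X\times X$. The acyclicity argument (via $\zp_1:=p_1\circ\zb(\zD_X)$ being a trivial fibration for fibrant $X$ by 2-out-of-3 and pullback stability) is essentially identical to the paper's. Your version is shorter and arguably cleaner; the only loose end — that the two presentations of $\op{Path}^Q_0 X$ and the associated projections to $X$ agree under the canonical identification — is immediate from the universal property of $X\times X$, and you correctly flag it as bookkeeping.
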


\begin{proof}
It is natural to define the based path space $\op{Path}^Q_0X$ of $X\in\tt M$ as the kernel of the composite $\op{Path}X\twoheadrightarrow X\times X\stackrel{\zp_1}{\to}X\,,$ where $\zp_1$ is the projection on the first factor of $X\times X\,.$ The projection on the second factor will be denoted $\zp_2\,.$ For $f:X\to Y$ and $i\in\{1,2\}\,,$ we have a commutative diagram
$$
\begin{tikzcd}
\op{Path}^Q_0X\ar[r,"\mathfrak{k}_X"]\ar[d,dashed,"\op{Path}^Q_0f"]&\op{Path}X\ar[r,two heads]\ar[d,dashed,"\op{Path}f"]&X\times X\ar[r,"\zp_i"]\ar[d,dashed,"f\times f"]&X\ar[d,"f"]\\
\op{Path}^Q_0Y\ar[r,"\mathfrak{k}_Y"]&\op{Path}Y\ar[r,two heads]&Y\times Y\ar[r]&Y
\end{tikzcd}
$$
where $\op{Path}^Q_0f$ is the universal arrow that we get for $i=1\,.$ Since $\op{Path}$ is an endofunctor, the same holds for $\op{Path}^Q_0\,.$ If $i=2$ the diagram gives a natural transformation $\op{Path}^Q_0\Rightarrow\id_{\tt M}\,.$\medskip

The functor $\op{Path}^Q_0$ is a based path space functor in the sense of Definition \ref{BPSF}, if for every fibrant $X\in\tt M$ the $\tt M$--morphism $\op{Path}^Q_0X\to X$ is a fibration with an acyclic domain.\medskip

Since fibrations are closed under pullbacks, the projection $\zp_i:X\times X\to X$ is a fibration if $X$ is fibrant, so that the composite $p_i:\op{Path}X\twoheadrightarrow X\times X\stackrel{\zp_i}{\to}X$ is also a fibration. As $X\stackrel{\sim}{\to}\op{Path}X\stackrel{p_i}{\to}X$ is identity by definition of $\op{Path}X\,,$ it follows from the 2-out-of-3 axiom that $p_i$ is a weak equivalence and therefore a trivial fibration. Since trivial fibrations are closed under pullbacks and $\op{Path}^Q_0X:=\ker p_1=\op{Path}X\times_X0\,,$ the morphism $\op{Path}^Q_0X\to 0$ is a trivial fibration, so that $\op{Path}^Q_0X$ is acyclic. Next we show that the $\tt M$-morphism $$\mathfrak{p}_2:\op{Path}^Q_0 X\stackrel{\mathfrak{k}_X}{\to}\op{Path}X\stackrel{p_2}{\to} X$$ is a fibration. Therefore, let $Y\stackrel{\sim}{\rightarrowtail}Z$ be a trivial cofibration such that the left square of the diagram
$$
\begin{tikzcd}
Y\ar[r]\ar[d,tail,"\sim"]& \op{Path}_{0}^Q X\ar[r,"\mathfrak{k}_X"]\arrow[d,very near start,"\mathfrak{p}_2"]&\op{Path}X\arrow[d,two heads]\\
Z\arrow[r]\ar[rru,dashed, crossing over,near start,"\ell"]\ar[ru,dashed,"\mathfrak{l}"]&X\ar[r,"{(0,\id)}"]&X\times X
\end{tikzcd}
$$
commutes. To see that the right square also commutes, observe that there is a unique morphism $m:\op{Path}^Q_0X\to X\times X$ such that $\zp_1\circ m=0$ and $\zp_2\circ m=\mathfrak{p}_2\,.$ Since both morphisms $\op{Path}^Q_0X\to X\times X$ in the right square fulfill these conditions, they coincide. As the total square now commutes, there exists a lifting $\ell:Z\dashrightarrow\op{Path}X\,.$ Since $\op{Path}^Q_0X$ is a kernel, there is a unique morphism $\mathfrak{l}:Z\to\op{Path}^Q_0X$ such that $\mathfrak{k}_X\circ\mathfrak{l}=\ell\,.$ As the total upper triangle commutes and $\mathfrak{k}_X$ is a monomorphism so left cancellable, the left upper triangle commutes. In order to conclude that the left lower triangle commutes and that $\mathfrak{p}_2$ is a fibration, it suffices to notice that $(0,\id)$ is left cancellable.\medskip

It remains to prove that the loop space functor $\zW$ associated to the chosen based path space functor $\op{Path}^Q_0$ is Quillen's loop space functor $\zW^Q\,.$ For any $f:X\to Y$ we have the following commutative diagram
\be\label{LoopQuillen}
\begin{tikzcd}
&\zW X\ar[ld,dashed,swap,"\mathfrak{l}_X"]\ar[dd,dashed,near end,"\zW f"]\ar[r,"k_X(\mathfrak{p}_2)"]&\op{Path}^Q_0X\ar[dd,dashed,"\op{Path}^Q_0f"]\ar[rrr,bend left=30,two heads,"\mathfrak{p}_2"]\ar[r,"\mathfrak{k}_X(p_1)"]&\op{Path}X\ar[dd,dashed,"\op{Path}f"]\ar[r,two heads,"\zf_X"]&X\times X\ar[dd,dashed,near end,"f\times f"]\ar[r,"\zp_2"]&X\ar[dd,near end,"f"]&\\
\zW^QX\ar[rru,dashed,near end,crossing over,"\ell_X"]\ar[dd,dashed,"\zW^Q f"]\ar[rrru,crossing over,very near end,swap,"\zk_X(\zf_X)"]&&&&&&X\ar[from=lllu,bend right=20,crossing over,"p_1"]\ar[dd,"f"]\ar[from=llu,crossing over,near end,"\zp_1"]\\
&\zW Y\ar[r,"k_Y"]&\op{Path}^Q_0Y\ar[r,"\mathfrak{k}_Y"]&\op{Path}Y\ar[r,two heads,"\zf_Y"]&Y\times Y\ar[r]&Y&\\
\zW^QY\ar[rrru,swap,"\zk_Y"]\ar[from=ru,dashed]\ar[rru,dashed]&&&&&&Y\ar[from=llu]
\end{tikzcd}
\ee
in which only the universal morphisms $\ell_X$ and $\mathfrak{l}_X$ and the associated commutative squares require explanation. Obviously there is a unique morphism $\ell_X:\zW^QX\to \op{Path}_0^QX$ such that \be\label{Q1}\mathfrak{k}_X(p_1)\circ\ell_X=\zk_X(\zf_X)\;.\ee Notice now that $$\zp_i\circ\zf_X\circ \mathfrak{k}_X(p_1)\circ k_X(\mathfrak{p}_2)=0\,,$$ as $\mathfrak{k}_X(p_1)$ (resp., $k_X(\mathfrak{p}_2)$) is the kernel of $p_1$ (resp., $\mathfrak{p}_2$). However, the zero morphism $\zW X\to 0\to X\times X$ is the unique morphism from $\zW X$ to $X\times X$ whose composite with $\zp_i$ is the zero morphism $\zW X\to 0\to X\,.$ Hence $$\zf_X\circ \mathfrak{k}_X(p_1)\circ k_X(\mathfrak{p}_2)=0$$ and there is a unique morphism $\mathfrak{l}_X:\zW X\to \zW^Q X$ such that \be\label{Q2}\zk_X(\zf_X)\circ\mathfrak{l}_X=\mathfrak{k}_X(p_1)\circ k_X(\mathfrak{p}_2)\;.\ee From \eqref{Q1} and \eqref{Q2} it follows that $$\mathfrak{k}_X(p_1)\circ\ell_X\circ\mathfrak{l}_X=\mathfrak{k}_X(p_1)\circ k_X(\mathfrak{p}_2)\quad\text{and}\quad \ell_X\circ\mathfrak{l}_X=k_X(\mathfrak{p}_2)\;,$$ as $\mathfrak{k}_X(p_1)$ is left cancellable. The commutativity of the square associated with $\ell_X,\ell_Y$ (resp., with $\mathfrak{l}_X,\mathfrak{l}_Y$) follows from the left cancellability of $\mathfrak{k}_Y$ (resp., of $\zk_Y$).\medskip

We are now prepared to show that the pair $(\zW^QX,\ell_X)$ is a kernel of $\mathfrak{p}_2$ so that $\zW^Q X\doteq\zW X\,,$ $\zW^Q f\doteq\zW f$ and $\zW^Q\doteq \zW\,,$ which then completes the proof (cf. Diagram \eqref{LoopQuillen}). To see that $(\zW^QX,\ell_X)$ is a kernel, notice first that $$\mathfrak{p}_2\circ\ell_X=\zp_2\circ\zf_X\circ\zk_X(\zf_X)=0\;.$$ Further, if $\zl:\zL\to \op{Path}^Q_0X$ satisfies $\mathfrak{p}_2\circ\zl=0\,,$ there is a unique morphism $\zm:\zL\to\zW X$ such that $k_X(\mathfrak{p}_2)\circ\zm=\zl\,.$ However, then $\mathfrak{l}_X\circ\zm:\zL\to\zW^QX$ satisfies $\ell_X\circ\mathfrak{l}_X\circ\zm=\zl$ and is the unique morphism with these properties. Indeed, if $\zn:\zL\to\zW^QX$ is such that $\ell_X\circ\zn=\zl\,,$ then $$\zk_X(\zf_X)\circ\zn=\mathfrak{k}_X(p_1)\circ\ell_X\circ\zn=\mathfrak{k}_X(p_1)\circ\zl=\mathfrak{k}_X(p_1)\circ k_X(\mathfrak{p}_2)\circ\zm=\zk_X(\zf_X)\circ\mathfrak{l}_X\circ\zm\,,$$ so that $\zn=\mathfrak{l}_X\circ\zm\,.$
\end{proof}

\begin{rem}
\emph{If we dualize the constructions of this paper, we get the reduced suspension functor $\zS$ instead of the loop space functor $\zW\,$. From Theorem \ref{QLSF} and its dualization it follows that Quillen's suspension and loop space functors $\zS^Q$ and $\zW^Q$ are suspension and loop space functors in our sense. Since $$\mathbb{L}\zS^Q:{\tt Ho(M)}\rightleftarrows{\tt Ho(M)}:\mathbb{R}\zW^Q$$ are adjoint functors, Proposition \ref{Comparison} and its dualization show that any derived suspension functor $\mathbb{L}\zS$ is left adjoint to any derived loop space functor $\mathbb{R}\zW\,.$}
\end{rem}

Because of Theorems \ref{QLSF} and \ref{Preservation} Quillen's loop space functor $\zW^Q\in\tt Fun(M,M)$ preserves weak equivalences between fibrant objects, so that Theorem \ref{Fundamental0} implies that its derived functor $\mathbb{R}^{\op{K}}\zW^Q$ exists and is given at $\mathfrak{X}\in\tt M$ by $$\mathbb{R}\zW^Q\,(\mathfrak{X})\doteq\zW^Q(\tilde{F}\mathfrak{X})\in\tt Ho(M)\;,$$ where we omitted superscript $\op{K}\,.$ Quillen now gives the following
\begin{defi}\label{DefFS}
Let $\tt M$ be a pointed model category. A {\bf fibration sequence} in $\,\tt Ho(M)$ is a sequence $$X\to\cX\to\mathfrak{X}$$ in $\,\tt Ho(M)$ together with a $\tt Ho(M)$--morphism $R:\mathbb{R}\zW^Q(\mathfrak{X})\times_{\tt Ho(M)} X\to X\,,$ such that the following holds:
\begin{enumerate}
\item the sequence is isomorphic in $\,\tt Ho(M)$ to a sequence $K\to U\to V$ that is implemented by the kernel $K$ of an $\tt M$--fibration $U\twoheadrightarrow V$ between fibrant objects $U,V\,,$ i.e., there is a commutative $\tt Ho(M)$--diagram
\be
\begin{tikzcd}\label{IsoFS}
X\ar[r]\ar[d,"\cong"]\ar[d,swap,"i"]&\cX\ar[r]\ar[d,"\cong"]\ar[d,swap,"j"]&\mathfrak{X}\ar[d,"\cong"]\ar[d,swap,"k"]\\
K\ar[r]&U\ar[r]&V
\end{tikzcd}
\ee
whose vertical arrows are isomorphisms;
\item the morphism $R$ coincides under this isomorphism with the morphism $\zg\zr\,,$ i.e., if $\,\tilde k$ is the isomorphism $$\mathbb{R}\zW^Q(\mathfrak{X})\stackrel{\cong}{\to}\zW^QV$$ that is induced by $k$ and if $\tilde{k}^{-1}\times i^{-1}$ is the isomorphism $$\zW^QV\times_{\tt Ho(M)}K\stackrel{\cong}{\to} \mathbb{R}\zW^Q(\mathfrak{X})\times_{\tt Ho(M)}X$$ that is induced by $\tilde{k}^{-1}$ and $i^{-1}\,,$ we have $$i\circ R\circ(\tilde{k}^{-1}\times i^{-1})=\zg\zr\;.$$
\end{enumerate}
\end{defi}
\noindent We will show that Quillen's fibration sequences are tightly related to our homotopy fiber sequences. An initial observation that confirms this claim is Quillen's result that if we take a fibration sequence $K\to U\to V$ with action $\zg\zr\,,$ we have a connecting $\tt Ho(M)$--morphism \be\label{ConMor}\zd^Q:\zW^Q V\to K\;,\ee namely $$\zW^Q V\stackrel{(\id,\zg(0))}{\longrightarrow}\zW^Q V\times_{\tt Ho(M)}K\stackrel{\zg\zr}{\longrightarrow}K\,,$$ such that $\zW^Q V\stackrel{\zd^Q}{\to} K\to U$ is also a fibration sequence. Moreover, Quillen gets a long exact sequence similar to the long exact sequence in Proposition \ref{LESH}. Finally, the connecting morphism \eqref{ConMor} and the similarly defined connecting morphism \be\label{ConMorzDQ}\zD^Q:\mathbb{R}\zW^Q(\mathfrak{X})\to X\ee render the left square of the diagram
\be
\begin{tikzcd}\label{ConMorzDQDiag}
\mathbb{R}\zW^Q(\mathfrak{X})\ar[r,"\zD^Q"]\ar[d,"\cong"]\ar[d,swap,"\tilde{k}"]&X\ar[r]\ar[d,"\cong"]\ar[d,swap,"i"]&\cX\ar[d,"\cong"]\ar[d,swap,"j"]\\
\zW^QV\ar[r,"\zd^Q"]&K\ar[r]&U
\end{tikzcd}
\ee
commutative.\medskip

The next theorem specifies the relationship between fibration sequences and homotopy fiber sequences.

\begin{theo}
A homotopy fiber sequence $X\stackrel{f}{\longrightarrow}\cX\stackrel{g}{\longrightarrow}\mathfrak{X}$ of $h({\tt M}_{\op{f}})$ is a fibration sequence $X\stackrel{\zg f}{\longrightarrow}\cX\stackrel{\zg g}{\longrightarrow}\mathfrak{X}$ in $\tt Ho(M)$ and the connecting morphism $\zD$ defined in \eqref{ConMorzD} coincides with the connecting morphism $\zD^Q$ considered in \eqref{ConMorzDQ}.
\end{theo}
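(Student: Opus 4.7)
The plan is to establish the two assertions in turn. For the first --- that $x$ is a fibration sequence in the sense of Definition~\ref{DefFS} --- I factor $g$ through a fibration between fibrant objects, writing $\cX\stackrel{\sim}{\rightarrowtail} U\twoheadrightarrow V:=\mathfrak{X}$, and take the strict kernel $K\to U$. By Corollary~\ref{CorLur}, $k^Q:=(K\to U\twoheadrightarrow V)$ is a homotopy fiber sequence of the kernel-of-a-fibration-between-fibrants type required by Definition~\ref{DefFS}(1). The $\tt M^\s$-restrictions of $x$ and $k^Q$ are related by the evident weak equivalence, so Corollary~\ref{EquivHoCatSingle} together with Proposition~\ref{Explicit} provides a canonical isomorphism $\Xi\in\op{Hom}_{\tt Ho(h(M))}(x,k^Q)$. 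Vertex-wise localization of $\Xi$ yields the commutative $\tt Ho(M)$-diagram~\eqref{IsoFS} with isomorphic verticals, which is Definition~\ref{DefFS}(1). Condition~(2) is then obtained by transport: setting $R:=i^{-1}\circ\zg\zr\circ(\tilde k\times i)$ with the isomorphisms $i,j,k$ just produced makes the compatibility demanded in Definition~\ref{DefFS}(2) hold by construction.

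For the equality $\zD=\zD^Q$, I first choose the based path space functor $\op{Path}_0$ to be the one $\op{Path}_0^Q$ arising from Quillen's path space via Theorem~\ref{QLSF}, so that on fibrant objects $\zW$ and $\zW^Q$ coincide literally and in particular $\zW\mathfrak{X}=\zW^QV$. Unfolding \eqref{ConMorzD} and \eqref{ConMorzDQDiag}, the desired equality amounts to
$$\zve\circ\zg(\zd_g)=i^{-1}\circ\zd^Q\circ\tilde k$$
as morphisms $\zW V\to X$ in $\tt Ho(M)$. A diagram chase exploiting the naturality of the zigzag~\eqref{SeqWeqHFS} underlying $\Xi$, together with the analogous zigzag that realizes the isomorphism $\zve$, reduces this equality to the case of $k^Q$ itself: it suffices to verify that the Puppe connecting morphism $\zg(\zd_p):\zW^QV\to K_p$, where $K_p:=\op{Path}_0^QV\times_VU$, agrees with Quillen's $\zd^Q:\zW^QV\to K$ in $\tt Ho(M)$ under the canonical isomorphism $K_p\cong K$ supplied by Theorem~\ref{IndHoPull}, since both $K_p$ and $K$ are canonical representatives of the full homotopy pullback of $U\twoheadrightarrow V\leftarrow 0$.

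The main obstacle is precisely this reduced equality. Our $\zd_p$ is the universal arrow into the pullback $K_p$ induced by the pair $(\zW V\to\op{Path}_0^QV,\,\zW V\to 0\to U)$, whereas $\zd^Q$ arises from the action $\zg\zr$ restricted along $(\id,\zg(0))$, with $\zr$ itself built from a separate path-space pullback over $V\times V$. To bridge the two I plan to unpack the construction of $\zr$ and observe that its restriction to $\zW^QV\times 0$ factors through the same universal datum that defines $\op{Path}_0^Q$ from Quillen's path space in the proof of Theorem~\ref{QLSF}, and then to compose with the canonical comparison $K\to K_p$ induced by the weak equivalence $0\stackrel{\sim}{\to}\op{Path}_0^QV$. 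The reconciliation is fundamentally a matter of tracking two universal-property descriptions of the same homotopy-pullback datum; carrying out this bookkeeping carefully through the successive zigzags produced by Proposition~\ref{Explicit} is the technical heart of the statement.
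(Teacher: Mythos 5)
Your treatment of the first assertion matches the paper: factor $g$ into a weak equivalence followed by a fibration $\bar g:\bar\cX\twoheadrightarrow\mathfrak{X}$, take the strict kernel $K$, obtain the comparison isomorphism $\Xi$ in $\op{Hom}_{\tt Ho(h(M))}(x,k)$ from Corollary~\ref{EquivHoCatSingle} and Proposition~\ref{Explicit}, localize vertex-wise to get \eqref{IsoFS}, and transport $\zg\zr$ along the resulting isomorphisms to produce $R$. (Note that since $\mathfrak{X}$ is fibrant one has $k=\id$ and $\tilde k=\id$, so your $\tilde k\times i$ is just $\id\times i$; this is a harmless simplification.) Working with $\op{Path}_0=\op{Path}^Q_0$ is also what the paper implicitly does throughout its proof.

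For the equality $\zD=\zD^Q$, your reduction is the right one and corresponds to the front, top and parallel-to-right-face squares of the paper's three-dimensional diagram \eqref{Delta}: the front gives $\zD=\zve\circ\zg(\zd_g)$, the top compares $\zd_g$ with $\zd_{\bar g}$ via the canonical isomorphism $\iota:K_g\stackrel{\cong}\to K_{\bar g}$, and the uniqueness of $\Xi$ gives $i\circ\zve=\zg\mathfrak{k}^{-1}\circ\iota$. But the actual crux, which in the paper appears as Equation \eqref{ComLBS},
\[
\zd^Q=\zg\mathfrak{k}^{-1}\circ\zg(\zd_{\bar g})\,,
\]
is exactly the step you flag as ``the technical heart'' and then do not carry out. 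Your plan is to re-derive it by unpacking Quillen's construction of the action $\zr$ and chasing the two universal-property descriptions through the zigzags produced by Proposition~\ref{Explicit}; that is a genuine piece of work that your sketch does not complete, and it is not obviously shorter than Quillen's own argument. The paper's insight here is different in an essential way: it observes that when Quillen proves that $\zW^Q\mathfrak{X}\stackrel{\zd^Q}{\to}K\to\bar\cX$ is itself a fibration sequence (\cite[Section~I.3.5, Proposition~3]{Quill}), he already constructs the comparison with the kernel of a path-space pullback and establishes precisely the commutativity needed, so \eqref{ComLBS} can be read off from his proof rather than re-derived. With \eqref{ComLBS} available, the chain $\zD=\zve\circ\zg(\zd_g)=\zve\circ\iota^{-1}\circ\zg(\zd_{\bar g})=i^{-1}\circ\zg\mathfrak{k}^{-1}\circ\zg(\zd_{\bar g})=i^{-1}\circ\zd^Q=\zD^Q$ closes the argument. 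As it stands, then, your proposal has a genuine gap at the comparison between the Puppe connecting morphism and Quillen's $\zd^Q$ on the kernel representative; you would either need to actually execute the unpacking of $\zr$, or, more economically, invoke Quillen's Proposition~3 as the paper does.
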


\begin{proof}
Let $X\stackrel{f}{\to}\cX\stackrel{g}{\to}\mathfrak{X}$ be an objectwise fibrant homotopy fiber sequence of $\tt M\,,$ let $\cX\stackrel{\sim}{\to}\bar\cX\stackrel{\bar g}{\twoheadrightarrow}\mathfrak{X}$ be a factorization of $g$ into a weak equivalence followed by a fibration and let $(K,\zk)$ be the kernel of $\bar g\,.$ Then $K\stackrel{\zk}{\to}\bar\cX\stackrel{\bar g}{\twoheadrightarrow}\mathfrak{X}$ is also a homotopy fiber sequence of $\tt M$ and, if we proceed as at the beginning of the proof of Proposition \ref{LESHGen}, we get a commutative $\tt Ho(M)$--diagram
\be\label{Bottom}
\begin{tikzcd}
&X\ar[r,"\zg f"]\ar[d,"\cong"]\ar[d,swap,"i"]&\cX\ar[r,"\zg g"]\ar[d,"\cong"]\ar[d,swap,"j"]&\mathfrak{X}\ar[d,equal]\\
&K\ar[r,"\zg \zk"]&\bar\cX\ar[r,"\zg \bar g"]&\mathfrak{X}
\end{tikzcd}
\ee
The group action $\zg\zr:\zW^Q\mathfrak{X}\times_{\tt Ho(M)}K\to K$ mentioned just above Theorem \ref{QLSF} induces a $\tt Ho(M)$--morphism
$$
R:\mathbb{R}\zW^Q(\mathfrak{X})\times_{\tt Ho(M)}X\stackrel{\id\times i}{\to}\zW^Q\mathfrak{X}\times_{\tt Ho(M)}K\stackrel{\zg\zr}{\to}K\stackrel{i^{-1}}{\to}X\;
$$
which, together with the sequence $X\stackrel{\zg f}{\to}\cX\stackrel{\zg g}{\to}\mathfrak{X}\,,$ satisfies the requirements of Definition \ref{DefFS}, so that this sequence is a fibration sequence in $\tt Ho(M)$ as announced.\medskip

To prove that $\zD=\zD^Q$ we will describe the following commutative $\tt Ho(M)$--diagram:
\be\label{Delta}
\begin{tikzcd}
&&&&\zW^Q\mathfrak{X}\ar[r,"\zg(\zd_{\bar g})"]\ar[dd,equal]&K_{\bar g}\ar[r,"\zg(\zp_{\bar g})"]\ar[dd,near end,swap,"\zg\mathfrak{k}^{-1}"]\ar[from=dd,near start,swap,"\cong"]&\bar\cX\ar[r,"\zg\bar g"]\ar[dd,equal]&\mathfrak{X}\ar[dd,equal]\\
\zW^Q\mathfrak{X}\ar[r,swap,"\zg(\zd_g)"]\ar[rrrru,equal]&K_g\ar[r,swap,"\zg(\zp_g)"]\ar[rrrru,crossing over]&\cX\ar[r,swap,"\zg g"]\ar[rrrru,crossing over]&\mathfrak{X}\ar[rrrru,crossing over,equal]&&&&\\
&&&&\zW^Q\mathfrak{X}\ar[r,"\zd^Q"]&K\ar[r,"\zg\zk"]\ar[from=dllll]&\bar\cX\ar[r,"\zg\bar g"]\ar[from=dllll]&\mathfrak{X}\ar[dllll,equal]\\
\zW^Q\mathfrak{X}\ar[r,swap,"\zD"]\ar[uu,crossing over,equal]&X\ar[r,swap,"\zg f"]\ar[from=uu,crossing over,swap,"\zve"]\ar[from=uu,crossing over,"\cong"]&\cX\ar[r,swap,"\zg g"]\ar[from=uu,crossing over,equal]&\mathfrak{X}\ar[from=uu,crossing over,equal]&&&&
\end{tikzcd}
\ee

The commutative front of Diagram \eqref{Delta} comes from the commutative $\tt Ho(M)$--diagram \eqref{ReducedZigZag} and Equation \eqref{ConMorzD}. In particular
\be\label{ComLFS}
\zD=\zve\circ\zg(\zd_g)\;.
\ee

The commutative bottom is nothing but the commutative $\tt Ho(M)$--diagram \ref{Bottom}. In particular, its arrows that are not labelled are the isomorphisms $i$ and $j\,$.\smallskip

The upper row of the back of Diagram \eqref{Delta} consists of the image under $\zg$ of the terms of degrees $0-3$ of Puppe's long homotopy fiber sequence $\cP_{\bar g}\in\ell({\tt M}_{\op{f}})$ associated to $\bar g\in{\tt M}^\s_{\op{f}}\,.$ Notice that it follows from Diagram \eqref{PuppeProofPic} and the pasting law for pullbacks that $(\zW^Q\mathfrak{X},\zd_{\bar g})$ is the kernel of the fibration $\zp_{\bar g}:K_{\bar g}\twoheadrightarrow\bar\cX$ with fibrant source and target.\smallskip

The lower row of the back contains the fibration sequence that is implemented by the kernel $K$ of the fibration $\bar\cX\twoheadrightarrow\mathfrak{X}$ between the fibrant objects $\bar\cX$ and $\mathfrak{X}$ and the connecting morphism $$\zW^Q\mathfrak{X}\stackrel{\zd^Q}{\to}K$$ of Equation \eqref{ConMor} that makes \be\label{SFS}\zW^Q\mathfrak{X}\stackrel{\zd^Q}{\to}K\stackrel{\zg\zk}{\to}\bar\cX\ee a fibration sequence (see paragraph below Definition \ref{DefFS} and
\cite[Section I.3.5, Proposition 3]{Quill}).\medskip

As in the commutative $\tt M$--diagram
$$
\begin{tikzcd}
&&K_{\bar g}\ar[dd]&\bar\cX\ar[dd,two heads,"\bar g"]\ar[lld,equal]\ar[from=l,crossing over,"\zp_{\bar g}"]\\
K\ar[r,"\zk"]\ar[dd]\ar[rru,dashed,"\mathfrak{k}"]&\bar\cX&&\\
&&\op{Path}^Q_0\mathfrak{X}&\mathfrak{X}\ar[lld,equal]\ar[from=l,crossing over,two heads]\\
0\ar[rru,near start,"\sim"]\ar[r]&\mathfrak{X}\ar[from=uu,crossing over,two heads,"\bar g"]&&
\end{tikzcd}
$$
the front square is a model square and the back square is the pullback of a weakly equivalent fibrant cospan, the universal arrow $\mathfrak{k}:K\dashrightarrow K_{\bar g}$ is a weak equivalence. This explains the middle and right commutative squares of the back of Diagram \eqref{Delta}. In order to show that \eqref{SFS} satisfies the requirements of Definition \ref{DefFS}, Quillen had to construct a sequence that is implemented by the kernel of a fibration between fibrant objects and is isomorphic in $\tt Ho(M)$ to \eqref{SFS}. Actually he showed that the left square of the back of Diagram \eqref{Delta} commutes and uses the isomorphism given by the left and middle squares:
\be\label{ComLBS}
\zd^Q=\zg\mathfrak{k}^{-1}\circ\zg(\zd_{\bar g})\;.
\ee

In order to understand the top square of \eqref{Delta}, we consider the commutative $\tt M$--diagram
$$
\begin{tikzcd}
&&K_{\bar g}\ar[dd]&\bar\cX\ar[dd,two heads,"\bar g"]\ar[from=l,crossing over,"\zp_{\bar g}"]\\
K_g\ar[r,near end,"\zp_g"]\ar[dd]\ar[rru,dashed]&\cX\ar[rru,near start,"\sim"]&&\\
&&\op{Path}^Q_0\mathfrak{X}&\mathfrak{X}\ar[lld,equal]\ar[from=l,crossing over,two heads]\\
\op{Path}^Q_0\mathfrak{X}\ar[rru,equal]\ar[r,two heads]&\mathfrak{X}\ar[from=uu,crossing over,"g"]&&
\end{tikzcd}
$$
Once more, since the front square is a model square and the back square is the pullback of a weakly equivalent fibrant cospan, the universal arrow $K_g\dashrightarrow K_{\bar g}$ is a weak equivalence. Hence, the top middle and right squares of \eqref{Delta} commute and their arrows that are not labelled are isomorphisms; the one on the right is isomorphism $j$ and the one on the left is an isomorphism that we denote by $\iota\,$. From the commutativity of Diagram \eqref{PuppeProofPic2} follows that the top left square of Diagram \eqref{Delta} commutes:
\be\label{ComLTS}
\zg(\zd_{\bar g})=\iota\circ\zg(\zd_g)\;.
\ee

It remains to explain the commutativity of the squares that are parallel to the right face of Diagram \eqref{Delta}. Only the commutativity of the leftmost square is not entirely obvious. However, as $$K_g\stackrel{\zp_g}{\to}\cX\stackrel{g}{\to}\mathfrak{X}\quad\text{and}\quad K\stackrel{\zk}{\to}\bar\cX\stackrel{\bar g}{\twoheadrightarrow}\mathfrak{X}$$ are homotopy fiber sequences of $\tt M$ and the factorization $$\cX\stackrel{\sim}{\to}\bar\cX\stackrel{\bar g}{\twoheadrightarrow\mathfrak{X}}$$ of $g$ implements an isomorphism $\xi$ in $\tt Ho(M^\s)$ between their restrictions, there is a unique isomorphism $\Xi$ in $\tt Ho(h(M))$ that extends $\xi\,.$ Hence
\be\label{ComLPS}
i\circ\zve=\zg\mathfrak{k}^{-1}\circ\iota\;.
\ee

If we compare Diagrams \eqref{Bottom} and \eqref{IsoFS}, we see that in our case $k=\id\,,$ so that Diagram \eqref{ConMorzDQDiag} shows that
\be\label{zDQ}
\zD^Q=i^{-1}\circ\zd^Q\;.
\ee
On the other hand, it follows from Equations \eqref{ComLFS}, \eqref{ComLTS}, \eqref{ComLPS} and \eqref{ComLBS} that
\be\label{zD}
\zD=\zve\circ\zg(\zd_g)=\zve\circ\iota^{-1}\circ\zg(\zd_{\bar g})= i^{-1}\circ\zg\mathfrak{k}^{-1}\circ\zg(\zd_{\bar g})=i^{-1}\circ\zd^Q\;.
\ee
Finally, Equations \eqref{zDQ} and $\eqref{zD}$ allow us to conclude that $\zD=\zD^Q\,.$
\end{proof}

\section{Application to chain complexes}\label{Applications}

\subsection{Long homotopy fiber sequence and long exact homology sequence}

A particular advantage of our homotopy fiber sequence concept and related theory is that they are easy to apply. Let us summarize our construction. In each {\it pointed model category} that is equipped with a {\it based path space functor}, we consider the associated {\it loop space functor} and, for each morphism between fibrant objects, we take the associated {\it homotopy kernel} and {\it connecting morphism}. We then get {\it Puppe's long homotopy fiber sequence} and the corresponding {\it long exact sequences of sets}. In this section we apply the previous construction to chain complexes of modules.\medskip

Let $\tt A$ be an Abelian category and denote by ${\tt Ch(A)}$ the Abelian category of chain complexes and chain maps in $\tt A\,.$ If $R$ is a unital ring, the category $R-{\tt Mod}$ of left $R$--modules and $R$--linear maps is Abelian and ${\tt Ch}(R):={\tt Ch}(R-{\tt Mod})$ is the (Abelian) category of chain complexes of (left) $R$--modules and corresponding chain maps. We denote ${\tt Ch}_{\ge 0}(R)$ the full subcategory of non-negatively graded chain complexes of $R$-modules. Both categories, ${\tt Ch}(R)$ and ${\tt Ch}_{\ge 0}(R)\,,$ have a projective model structure in which weak equivalences are quasi-isomorphisms, while fibrations are degree-wise surjective chain maps in the unbounded case and chain maps that are surjective in positive degrees in the non-negatively graded case. In particular, in both cases all objects are fibrant. Moreover, both {\it model categories} are {\it pointed} with zero object the chain complex $(\{0\},0)\,.$\medskip

If $p\in\Z\,,$ the translation functor $[p]\in\tt Fun(Ch(A),Ch(A))$ is defined on objects $(A,d_A)\in\tt Ch(A)$ by \be\label{Translation}A[p]_n:=A_{n-p}\quad\text{and}\quad d_{A[p]}:=(-1)^pd_A\;,\ee and on $\tt Ch(A)$--morphisms $f:(A,d_A)\to (B,d_B)$ by $f[p]_n:=f_{n-p}\,.$ Further, the mapping cone $\op{Mc}(f)$ of the chain map $f$ is the chain complex given by
\be\label{Mc}\op{Mc}(f)_n:=A[1]_n\oplus B_n\quad\text{and}\quad d_{\op{Mc}(f)}:=\left(
                                      \begin{array}{cc}
                                        d_{A[1]} & 0 \\
                                        f & d_B \\
                                      \end{array}
                                    \right)\;\;.
\ee

It is well known that to every short exact sequence $0\to A\stackrel{f}{\to} B\stackrel{g}{\to} C\to 0$ of chain complexes and chain maps in $\tt A$ is associated a long exact sequence in homology. It is easy to see that the short sequences of chain complexes \be\label{MapLabels}0\to B\stackrel{i}{\to} \op{Mc}(f)\stackrel{p}{\to} A[1]\to 0\ee ($i$ and $p$ are the canonical injection and projection, respectively) and $0\to C\to \op{Mc}(g)\to B[1]\to 0$ are exact. The long exact sequences associated to the latter two short exact sequences and the long exact sequence associated to the former short exact sequence are known to coincide. Since $H_n(A[p])=H_{n-p}(A)\,,$ the long exact sequence in homology reads for instance \cite{Wei}
$$
\cdots \longrightarrow H_1(\op{Mc}(f))\stackrel{H_{1}(p)}{\longrightarrow} H_0(A)\stackrel{H_0(f)}{\longrightarrow} H_0(B)$$
\be\label{LESWei}\stackrel{H_0(i)}{\longrightarrow}H_0(\op{Mc}(f))\stackrel{H_0(p)}{\longrightarrow} H_{-1}(A)\stackrel{H_{-1}(f)}{\longrightarrow} H_{-1}(B)\longrightarrow\cdots\;\;.
\ee

As mentioned above, in the case ${\tt A}=R-{\tt Mod}\,,$ the underlying category ${\tt Ch}(R)$ or ${\tt Ch}_{\ge 0}(R)$ is a pointed model category. Hence it is natural to ask whether we can find a based path space functor of the category of chain complexes considered, such that the preceding long exact sequence in homology associated to $f:A\to B$ or to the sequence
\be\label{SeqT}
\cdots\longrightarrow\op{Mc}(f)[-1]\stackrel{p[-1]}{\longrightarrow}A\stackrel{f}{\longrightarrow} B\stackrel{i}{\longrightarrow} \op{Mc}(f)\stackrel{p}{\longrightarrow} A[1]\stackrel{f[1]}{\longrightarrow}B[1]\longrightarrow\cdots
\ee
can be obtained as a long exact sequence of sets
$$
\cdots\longrightarrow [M,\zW^2B]\stackrel{[M,\zg(\zW(\zd_f))]}{\longrightarrow} [M,\zW(K_f)]\stackrel{[M,\zg(\zW(\zp_f))]}{\longrightarrow} [M,\zW A]\stackrel{[M,\zg(\zW f)]}{\longrightarrow}
$$
\be\label{LESH2}
[M,\zW B]\stackrel{[M,\zg(\zd_f)]}{\longrightarrow} [M,K_f]\stackrel{[M,\zg(\zp_f)]}{\longrightarrow} [M,A]\stackrel{[M,\zg f]}{\longrightarrow} [M,B]
\ee
corresponding to Puppe's long homotopy fiber sequence
\be\label{SeqP}
\cdots\longrightarrow \zW^2B\stackrel{\zW(\zd_f)}{\longrightarrow} \zW(K_f)\stackrel{\zW(\zp_f)}{\longrightarrow} \zW A\stackrel{\zW f}{\longrightarrow} \zW B\stackrel{\zd_f}{\longrightarrow} K_f\stackrel{\zp_f}{\longrightarrow} A\stackrel{f}{\longrightarrow} B
\ee
of $f\,.$

\subsection{Chain complexes as pointed model category}\label{ChCoI}

The comparison of \eqref{SeqT} and \eqref{SeqP} suggests that we find a based path space functor
$$
\op{Path}_0\in{\tt Fun}({\tt Ch}(R),{\tt Ch}(R))
$$
such that the homotopy kernel $K_f:=\op{Path}_0B\times_BA$ of a chain map $f:A\to B$ coincides with the shifted mapping cone $\op{Mc}(f)[-1]$ of $f$, so that we must define $\op{Path}_0B=K_{\id_B}$ by
$$
\op{Path}_0B:=\op{Mc}(\id_B)[-1]\in{\tt Ch}(R)\;,
$$
for any $B\in{\tt Ch}(R)\,.$ In view of \eqref{Translation} and \eqref{Mc} we have
$$
(\op{Path}_0B)_n=B_n\oplus B_{n+1}\quad\text{and}\quad \op{d}_B:=d_{\op{Path}_0B}=-d_{\op{Mc}(\id_B)}=\left(
                                      \begin{array}{cc}
                                        d_{B} & 0 \\
                                        -\id_B & -d_B \\
                                      \end{array}
                                    \right)\;\;.
$$
If $B\in{\tt Ch}_{\ge 0}(R)\subset{\tt Ch}(R)\,,$ then its based path space
$$
\op{Path}_0B:\;\,\cdots \stackrel{\op{d}_{B,2}}{\longrightarrow} B_1\oplus B_2\stackrel{\op{d}_{B,1}}{\longrightarrow}B_0\oplus B_1\stackrel{\op{d}_{B,0}}{\longrightarrow}B_0\stackrel{\op{d}_{B,-1}}{\longrightarrow}0
$$
in ${\tt Ch}(R)$ has a term in degree $-1\,,$ so that we define its based path space $\op{Trath}_0B$ in ${\tt Ch}_{\ge 0}(R)$ by truncation as the sub-complex
$$
\op{Trath}_0B:\;\,\cdots \stackrel{\op{d}_{B,2}}{\longrightarrow} B_1\oplus B_2\stackrel{\op{d}_{B,1}}{\longrightarrow}\ker\op{d}_{B,0}\stackrel{\op{d}_{B,0}}{\longrightarrow}0\;.
$$
If $f:A\to B$ is a chain map, then $f\oplus f[-1]$ is a chain map from $A\oplus A[-1]$ to $B\oplus B[-1]\,,$ i.e., it is a degree 0 $R$-linear map that commutes with the differentials $d\oplus(-d)\,,$ so also with the differentials $\op{d}$ since the additional terms are both equal to $-f\,.$ Hence $\op{Path}_0f:=f\oplus f[-1]$ is a chain map from $\op{Path}_0A$ to $\op{Path}_0B$ and its restriction $\op{Trath}_0f:=\op{Path}_0f|_{\op{Trath}_0A}$ is valued in $\op{Trath}_0B$ and is therefore a chain map from $\op{Trath}_0A$ to $\op{Trath}_0B\,.$ Since $\oplus$ is the coproduct functor $$\amalg:{\tt Ch}(R)\times{\tt Ch}(R)\to {\tt Ch}(R)\;$$ on the product category, we have
$$
\op{Path}_0(g\circ f)=\amalg\big(g\circ f, g[-1]\circ f[-1]\big)=\amalg\big((g,g[-1])\circ(f,f[-1])\big)=
$$
$$
\amalg\big(g,g[-1]\big)\circ\amalg\big(f,f[-1]\big)=\op{Path}_0g\circ\op{Path}_0f\;,
$$
and the same result obviously holds for the restriction $\op{Trath}_0(g\circ f)$ of $\op{Path}_0(g\circ f)\,.$ Since $\op{Path}_0$ and $\op{Trath}_0$ clearly preserve identities, they are endofunctors of ${\tt Ch}(R)$ and ${\tt Ch}_{\ge 0}(R)\,,$ respectively. Further, the projection $\zp_B:\op{Path}_0B\to B$ onto the first term of $\op{Path}_0B$ is visibly a degree-wise surjective chain map, i.e., a fibration of ${\tt Ch}(R)\,,$ and the projection $\zt_B:\op{Trath}_0B\to B$ onto the first term is a chain map that is surjective in positive degrees, i.e., is a fibration of ${\tt Ch}_{\ge 0}(R)\,.$ Since $\id_B$ is a quasi-isomorphism its mapping cone $\op{Mc}(\id_B)$ has vanishing homology and so has its shift $\op{Path}_0B$ and the sub-complex $\op{Trath}_0B\,.$ This means that the morphisms $0\to \op{Path}_0B$ and $0\to \op{Trath}_0B$ are quasi-isomorphisms, so that the based path spaces in ${\tt Ch}(R)$ and ${\tt Ch}_{\ge 0}(R)$ are acyclic. Finally, the transformations $\zp:\op{Path}_0\to \id_{{\tt Ch}(R)}$ and $\zt:\op{Trath}_0\to \id_{{\tt Ch}_{\ge 0}(R)}$ are clearly natural, so that $\op{Path}_0$ and $\op{Trath}_0$ are actually based path space functors in ${\tt Ch}(R)$ and ${\tt Ch}_{\ge 0}(R)\,,$ respectively. \medskip

To compute Puppe's sequence, we still need the loop space functor, the homotopy kernel and the connecting morphism.\medskip

By definition the loop space of $B$ is the kernel of the fibration $\zp_B$ or $\zt_B\,.$ It is easy to see that in the unbounded case, the loop space $\zW B$ is \be\label{Loop}\zW B = B[-1]\ee with differential $d_{\zW B}=d_{B[-1]}=-d_B\,,$ and that in the non-negatively graded case, the loop space $\zY B$ is the truncation sub-complex \be\label{LoopNNG}\zY B:\;\;\cdots\longrightarrow B_2\stackrel{-d_{B,2}}{\longrightarrow}\ker d_{B,1}\stackrel{-d_{B,1}}{\longrightarrow}0\;\ee of $\zW B\,.$ Moreover, the universal morphisms $\zW f$ and $\zY f$ associated to a chain map $f:A\to B$ are obviously the chain map \be\label{Loopf}\zW f=f[-1]\ee and its restriction \be\label{LoopfNNG}\zY f=f[-1]|_{\zY A}\;,\ee respectively. \medskip

Remember now that we chose the based path space functor so that the homotopy kernel $K_f$ of a chain map $f:A\to B$ should be the $(-1)$--shift of this map's mapping cone $\op{Mc}(f)\,.$ A direct computation shows that in the unbounded case the pullback $K_f:=\op{Path}_0B\times_BA$ is actually given by
\be\label{HoKer} K_f=\op{Mc}(f)[-1]\;,\ee i.e., that we have
$$
(K_f)_n=A_n\oplus B_{n+1}\quad\text{and}\quad \op{d}_f:=d_{K_f}=\left(
                                      \begin{array}{cc}
                                        d_A & 0 \\
                                        -f & -d_B \\
                                      \end{array}
                                    \right)\;\;.
$$
Indeed, it is straightforward to check that the arrows of the square in the diagram
\be\label{Univ}
\begin{tikzcd}
C_n\ar[dr,dashed,near end,"\zm"]\ar[rrd,"\zvf_1"]\ar[ddr,swap,"\zvf_2"]&&\\
&A_n\oplus B_{n+1}\ar[d,"\zp_f"]\ar[r,swap,"f\oplus\id\!{[-1]}"]&B_n\oplus B_{n+1}\ar[d,"\zp_B"]\\
&A_n\ar[r,swap,"f"]&B_n
\end{tikzcd}
\ee
are chain maps and that the square commutes. Further assume that $\zvf_1:C\to \op{Path}_0B$ and $\zvf_2:C\to A$ are chain maps such that $\zp_B\circ\zvf_1=f\circ\zvf_2\,.$ If we set $\zvf_1c=(\zvf_1c)_n+(\zvf_1c)_{n+1}\,,$ the preceding commutation information reads \be\label{Info}(\zvf_1c)_n=f(\zvf_2c)\;.\ee Now, if the universal arrow $\zm$ exists we have necessarily
\be\label{Nec}
(\zm c)_n=\zvf_2c\quad\text{and}\quad(\zm c)_{n+1}=(\zvf_1 c)_{n+1}\;,
\ee
so that it is unique. A short computation that uses \eqref{Info} shows that conversely the map $\zm$ defined by \eqref{Nec} is a chain map that makes the two triangles in \eqref{Univ} commute. In the non-negatively graded case, the homotopy kernel $\Im_f$ is again the truncation sub-complex, i.e., in positive degrees $\Im_f$ coincides with $K_f$ and in degree zero $\Im_f$ is given by
\be\label{HoKerNNG}
(\Im_f)_0=\ker\op{d}_{f,0}=\{(a_0,b_1)\in A_0\oplus B_1:d_B(b_1)=-f(a_0)\}\;.
\ee

Finally, we defined the connecting morphism $\zd_f$ as the universal map $\zW B\dashrightarrow K_f$ associated to the inclusion $\zvf_1$ and the zero morphism $\zvf_2\,.$ From \eqref{Nec} it follows that $\zd_f$ is in the unbounded case the inclusion \be\label{ConneMorph}\zd_f=i[-1]:B[-1]\to \op{Mc}(f)[-1]\;,\ee where $i:B\to \op{Mc}(f)$ is the injection of Equation \eqref{MapLabels}. In the non-negatively graded case, the universal connecting morphism $\zy_f:\zY B\dashrightarrow\Im_f$ coincides with $i[-1]$ in positive degrees and $\zy_{f,0}:\ker d_{B,1}\dashrightarrow \ker\op{d}_{f,0}$ is the inclusion \be\label{ConneMorphNNG}\zy_{f,0}=i[-1]_0|_{\ker d_{B,1}}\ee as $\zvf_ {1,0}$ is the inclusion from $\ker d_{B,1}$ to $$\ker \op{d}_{B,0}=\{(-d_Bb,b):b\in B_1\}\;.$$ Let us still mention that $\zp_f:K_f\to A$ is in the unbounded case the projection \be\label{Proj}\zp_f=p[-1]:\op{Mc}(f)[-1]\to A\,,\ee where $p:\op{Mc}(f)\to A[1]$ is the projection of Equation \eqref{MapLabels}, and that in the non-negatively graded case it coincides with $p[-1]$ in all positive degrees and with the restriction \be\label{ProjNNG}\zp_{f,0}=p[-1]_0|_{\ker \op{d}_{f,0}}\ee in degree 0.\medskip

We are now prepared to compute Puppe's long homotopy fiber sequence and the corresponding long exact sequences in sets. We already mentioned previously (see Equation \eqref{1:1CF}) that there is a 1:1 correspondence between the set $[A,B]$ of ${\tt Ho(M)}$--morphisms from a cofibrant object $A$ of a model category $\tt M$ to a fibrant object $B$ and the set $\op{Hom}_{\tt M}(A,B)/\simeq$ of homotopy classes of $\tt M$--morphisms from $A$ to $B\,.$ Let us remember that two morphisms from a cofibrant $A$ to a fibrant $B$ are homotopic if and only if they are right homotopic. For ${\tt M=Ch}$ with ${\tt Ch=Ch}(R)$ or ${\tt Ch=Ch}_{\ge 0}(R)\,,$ two chain maps from a cofibrant $A$ to any $B$ are homotopic if and only if they are chain homotopic \cite[Theorem 2.3.11]{Ho99}. Moreover, the chain complex $R$ concentrated in degree 0 (with zero differential) is cofibrant \cite[Lemma 2.3.6]{Ho99}, so that $$[R,A]\cong \op{Hom}_{\tt Ch}(R,A)/\simeq\;.$$ Since the ring $R$ with unit $1$ is a free $R$-module with basis $1\,,$ a degree zero $R$-linear map $\mathfrak{f}:R\to A$ is fully determined by the image $\mathfrak{f}(1)\in A_0$ and a chain map $\mathfrak{f}:R\to A$ can be identified with the image $\mathfrak{f}(1)\in\ker d_{A,0}\;:$ there is a 1:1 correspondence $$\flat:\op{Hom}_{\tt Ch}(R,A)\ni\mathfrak{f}\mapsto \mathfrak{f}(1)\in\ker d_{A,0}\;.$$ Further, two chain maps $\mathfrak{f},\mathfrak{g}:R\to A$ are homotopic if and only if there is an $R$-linear map $\mathfrak{h}:R\to A_1$ such that $\mathfrak{f}-\mathfrak{g}=d_{A,1}\circ \mathfrak{h}\,,$ or, equivalently, there is a $1$--chain $\mathfrak{h}(1)\in A_1$ such that $\mathfrak{f}(1)-\mathfrak{g}(1)=d_{A,1}(\mathfrak{h}(1))\,.$ This means that $$\mathfrak{f}\simeq \mathfrak{g}\quad\text{if and only if}\quad \flat(\mathfrak{f})-\flat(\mathfrak{g})\in\op{im}d_{A,1}\;.$$ Hence $\flat$ induces a 1:1 correspondence \be\label{IdObj}\flat_\sharp:[R,A]\cong \op{Hom}_{\tt Ch}(R,A)/\simeq\;\;\ni[\mathfrak{f}]_\simeq\mapsto [\mathfrak{f}(1)]_{\op{im}}\in H_0(A)\;.\ee Since $[\mathfrak{f}]_\simeq=\zg\mathfrak{f}$ (see \eqref{GaCF}), if $f\in\op{Hom}_{\tt Ch}(A,B)\,,$ then $\zg f\in[A,B]$ and $[R,\zg f]$ is the set-theoretical map $$[R,\zg f]:[R,A]\ni [\mathfrak{f}]_\simeq \mapsto [f\circ\mathfrak{f}]_\simeq\in [R,B]\;.$$ If we read this map through the correspondence \eqref{IdObj}, we get the map \be\label{IdMor} H_0(f): H_0(A)\ni[\mathfrak{f}(1)]_{\op{im}}\mapsto [f(\mathfrak{f}(1))]_{\op{im}}\in H_0(B)\;.\ee

In the case ${\tt M}={\tt Ch}(R)\,,$ if we apply \eqref{LESH2} with $M=R\,,$ take into account \eqref{Loop}, \eqref{Loopf}, \eqref{HoKer}, \eqref{ConneMorph} and \eqref{Proj}, and use the identifications \eqref{IdObj} and \eqref{IdMor}, we find the long exact sequence
$$
\cdots\longrightarrow H_2(B)\stackrel{H_2(i)}{\longrightarrow} H_2(\op{Mc}(f))\stackrel{H_2(p)}{\longrightarrow} H_1(A)\stackrel{H_1(f)}{\longrightarrow}
$$
$$
H_1(B)\stackrel{H_1(i)}{\longrightarrow} H_1(\op{Mc}(f))\stackrel{H_1(p)}{\longrightarrow} H_0(A)\stackrel{H_0(f)}{\longrightarrow} H_0(B)\;,
$$
which is the left hand side of the homology sequence \eqref{LESWei}. In the case ${\tt M}={\tt Ch}_{\ge 0}(R)\,,$ the additional equations \eqref{LoopNNG}, \eqref{LoopfNNG}, \eqref{HoKerNNG}, \eqref{ConneMorphNNG} and \eqref{ProjNNG} lead to the same homology sequence (since in degree zero the homologies of the sub-complexes coincide with those of the full complexes), which in this case is the complete sequence. In the unbounded case, we can also get the complete sequence, or, more precisely, we can extend the homology sequence obtained to the right up to any degree $-n$ ($n>0$). For this it suffices to replace in the construction above the chain map $f$ by the chain map $f[n]$ and to observe that $$H_k(K_{f[n]})=H_k(K_f[n])=H_{k-n}(K_f)\;.$$

\subsection{Monoidal model categories}

In this subsection we introduce a suitable based path space functor in a pointed monoidal model category. First we recall the

\begin{defi}
A {\bf closed symmetric monoidal category} is a symmetric monoidal category $({\tt C}, \otimes, \mathbb{I})$ such that for every $B\in\tt C$ the functor $-\0 B:\tt C\to C$ has a right adjoint $\ul{\op{Hom}}_{\tt C}(B,-)\,,$ i.e., there exists a functor $\ul{\op{Hom}}_{\tt C}(B,-)$ together with a family of bijections
\be\label{Closed}\op{Hom}_{\tt C}(A\otimes B,C)\cong\op{Hom}_{\tt C}(A,\underline{\op{Hom}}_{\tt C}(B,C))\;\ee
indexed by $A,C\in \tt C$ that is natural in $A$ and $C\,.$
\end{defi}

A closed symmetric monoidal category is a closed category. In particular, there is an {\bf internal Hom} functor $$\ul{\op{Hom}}_{\tt C}:{\tt C}^{\op{op}}\times{\tt C}\to {\tt C}$$ such that if we fix the first argument we get the right adjoint of the definition. Moreover, there is a natural isomorphism $$\ul{\op{Hom}}_{\tt C}(\mathbb{I},-)\cong\id_{\tt C}\;$$ that allows us to identify these endofunctors.\medskip

We will use the following definition of a monoidal model category.

\begin{defi}\label{MMC} A {\bf monoidal model category} is a closed symmetric monoidal category $({\tt M},\0,\mathbb{I},\ul{\op{Hom}}_{\tt M})$ equipped with a model structure such that the following two compatibility conditions are fulfilled:
\begin{enumerate}
  \item[(i)]  the monoidal unit $\mathbb{I}$ is a cofibrant object,
  \item[(ii)]  for every cofibration $i:A\rightarrowtail B$ and every fibration $p:K\twoheadrightarrow L\,,$ the universal map
\begin{equation}\label{homs}
\underline{\op{Hom}}_{\tt M}(B,K)\dashrightarrow \underline{\op{Hom}}_{\tt M}(B,L)\times_{\underline{\op{Hom}}_{\tt M}(A,L)} \underline{\op{Hom}}_{\tt M}(A,K)
\end{equation}
is a fibration which is a weak equivalence if $i$ or $p$ is.
\end{enumerate}
\end{defi}

\begin{rem} \emph{It follows from \cite[Lemma 4.2.2.]{Ho99} that Condition $(ii)$ is equivalent to the pushout-product axiom}\smallskip

\emph{{\small PPA}: If $i:A\rightarrowtail B$ and $j:K\rightarrowtail L$ are cofibrations, the universal morphism
$$A\otimes L\coprod_{A\otimes K} B\otimes K\dashrightarrow B\otimes L$$
is a cofibration which is a weak equivalence if $i$ or $j$ is.}\medskip

\emph{Further, Condition $(i)$ and the {\small PPA} imply the unit axiom}\smallskip

\emph{{\small UA}: for every cofibrant replacement $C\mathbb{I}\stackrel{q}{\rightarrow}\mathbb{I}$ of $\mathbb{I}$ and every cofibrant object $X$ the morphism $$C\mathbb{I}\otimes X\stackrel{q\0\id_X}\rightarrow X$$ is a weak equivalence. Indeed, the {\small PPA} implies that $-\0 X$ preserves trivial cofibrations, so that because of Brown's lemma $-\0 X$ sends weak equivalences between cofibrant objects to weak equivalences. Since $C\mathbb{I}\stackrel{q}{\rightarrow}\mathbb{I}$ is if $\mathbb{I}$ is cofibrant a weak equivalence between cofibrant objects, the conclusion follows.}\medskip

\emph{Hence Definition \ref{MMC} is a little stronger than the standard definition which requires that the axioms {\small UA} and {\small PPA} are fulfilled.}
\end{rem}

For example, if $R=k$ is a commutative unital ring, the category $$\big({\tt Ch}(k),\0_k,k,\ul{\op{Hom}}_{{\tt Ch}(k)}\big)$$ of chain complexes in the category ${\tt Mod}(k)$ of modules over $k$ with its projective model structure is a (pointed) monoidal model category in the sense of Definition \ref{MMC}. This follows from \cite[Proposition 4.2.13]{Ho99} and the observation in the previous subsection that $R=k$ is cofibrant. Let us also remind that the tensor product is defined by
$$(A\otimes_k B)_n=\bigoplus_{\zm+\zn=n}A_\zm\otimes_k B_\zn\,,\quad d_\0(a\otimes b)=d_Aa\otimes b+(-1)^{\zm}a\otimes d_Bb\;$$ and that the internal Hom is given by
\be\label{IntHom}{\underline{\op{Hom}}}_{{\tt Ch}(k)}(A,B)_n=\prod_{\zm\in \mathbb{Z}}\op{Hom}_{{\tt Mod}(k)}(A_\zm,B_{\zm+n})\,,\quad(\ul{d}f)_\zm=d_B\circ f_\zm+(-1)^{n+1}f_{\zm-1}\circ d_A\;.\ee

Let now $(\tt M,\0,\mathbb{I},\ul{\op{Hom}}_{\tt M})$ be any pointed monoidal model category and let \be\label{UnitCone}\mathbb{I}\rightarrowtail\op{Cone}(\mathbb{I})\stackrel{\sim}{\to}0\ee be a factorization of $\mathbb{I}\to 0$ into a cofibration followed by a weak equivalence. It is easy to check that if ${\tt M=Ch}(k)$ and thus $\mathbb{I}=k\,,$ the mapping cone
\be\label{Mck}
\op{Mc}(\id_k):\;\cdots\longrightarrow 0\longrightarrow \underbrace{k}_{(1)}\stackrel{\id_k}{\longrightarrow}\underbrace{k}_{(0)}\longrightarrow0\longrightarrow\cdots\;,
\ee
where the integers in parentheses indicate the degree, satisfies the condition $$k\rightarrowtail\op{Mc}(\id_k)\stackrel{\sim}{\to}0\;,$$ so that we can choose \be\label{Conek}\op{Cone}(k)=\op{Mc}(\id_k)\;.\ee Indeed, the morphism $k\to\op{Mc}(\id_k)$ is the morphism $S^0(k)\to D^1(k)$ from the 0-sphere at $k$ to the 1-disk at $k$ which is a generating cofibration and the morphism $\op{Mc}(\id_k)\to 0$ is a quasi-isomorphism as $$H(\op{Mc}(0:\op{Mc}(\id_k)\to 0))=H(\op{Mc}(\id_k)[1])=0$$ since $\id_k$ is a quasi-isomorphism. This justifies the notation $\op{Come}(\mathbb{I})$ and the name `cone of $\mathbb{I}$'. It is now natural to define the cone functor $\op{Cone}\in{\tt Fun}(\tt M, M)$ by $$\op{Cone}:=-\0\op{Cone}(\mathbb{I})$$ and the dual based path space functor $\textsf{Path}_0\in\tt Fun(M,M)$ by $$\textsf{Path}_0:=\ul{\op{Hom}}_{\tt M}(\op{Cone}(\mathbb{I}),-)\;.$$

\begin{prop}\label{BPSFMMC}
In a pointed monoidal model category $(\tt M,\0,\mathbb{I},\ul{\op{Hom}}_{\tt M})$ with unit cone $\op{Cone}(\mathbb{I})$ defined by \eqref{UnitCone}, the functor \emph{\be\label{BPSPMMCEq}\textsf{Path}_0:=\ul{\op{Hom}}_{\tt M}(\op{Cone}(\mathbb{I}),-)\;\ee} is a based path space functor in the sense of Definition \ref{BPSF}.
\end{prop}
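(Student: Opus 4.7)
The plan is to check the three requirements of Definition \ref{BPSF}. Functoriality of $\textsf{Path}_0$ is immediate, being the right adjoint of $-\otimes\op{Cone}(\mathbb{I})$. I define the natural transformation $\pi:\textsf{Path}_0\Rightarrow\id_{\tt M}$ by applying the contravariant functor $\ul{\op{Hom}}_{\tt M}(-,X)$ to the cofibration $j:\mathbb{I}\rightarrowtail\op{Cone}(\mathbb{I})$ of \eqref{UnitCone} and composing with the canonical isomorphism $\ul{\op{Hom}}_{\tt M}(\mathbb{I},-)\cong\id_{\tt M}$; naturality in $X$ follows from bifunctoriality of $\ul{\op{Hom}}_{\tt M}$. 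Two identifications will be used repeatedly: $\ul{\op{Hom}}_{\tt M}(\mathbb{I},X)\cong X$, and $\ul{\op{Hom}}_{\tt M}(Y,0)\cong 0$ (since the tensor/internal-hom adjunction gives $\op{Hom}_{\tt M}(Z,\ul{\op{Hom}}_{\tt M}(Y,0))\cong\op{Hom}_{\tt M}(Z\otimes Y,0)$, a singleton for every $Z$, so $\ul{\op{Hom}}_{\tt M}(Y,0)$ is terminal, hence zero).

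To show $\pi_X$ is a fibration when $X$ is fibrant, I apply condition (ii) of Definition \ref{MMC} to the cofibration $j$ and to the fibration $p:X\twoheadrightarrow 0$. Via the two identifications above, the codomain of the universal morphism \eqref{homs} computes to
$$\ul{\op{Hom}}_{\tt M}(\op{Cone}(\mathbb{I}),0)\times_{\ul{\op{Hom}}_{\tt M}(\mathbb{I},0)}\ul{\op{Hom}}_{\tt M}(\mathbb{I},X)\cong 0\times_0 X\cong X\;,$$
and its composite with this identification coincides with $\pi_X$, both being obtained by precomposition with $j$ followed by $\ul{\op{Hom}}_{\tt M}(\mathbb{I},X)\cong X$. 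Condition (ii) then delivers that $\pi_X$ is a fibration.

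For the acyclicity of $\textsf{Path}_0X$ at fibrant $X$, I first observe that $0\to\op{Cone}(\mathbb{I})$ is a \emph{trivial} cofibration: it is the composite of the cofibrations $0\rightarrowtail\mathbb{I}$ (by condition (i) of Definition \ref{MMC}) and $\mathbb{I}\rightarrowtail\op{Cone}(\mathbb{I})$ (from \eqref{UnitCone}), and the 2-out-of-3 axiom applied to $0\to\op{Cone}(\mathbb{I})\stackrel{\sim}{\to}0=\id_0$ shows that it is a weak equivalence. Applying condition (ii) to this trivial cofibration and to $p:X\twoheadrightarrow 0$, the universal map becomes
$$\textsf{Path}_0X\to\ul{\op{Hom}}_{\tt M}(\op{Cone}(\mathbb{I}),0)\times_{\ul{\op{Hom}}_{\tt M}(0,0)}\ul{\op{Hom}}_{\tt M}(0,X)\cong 0\times_0 0\cong 0\;,$$
i.e.\ a trivial fibration $\textsf{Path}_0X\stackrel{\sim}{\twoheadrightarrow}0$, and a final application of the 2-out-of-3 axiom to $0\to\textsf{Path}_0X\stackrel{\sim}{\to}0=\id_0$ yields that $0\to\textsf{Path}_0X$ is a weak equivalence. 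The only place that calls for care is the identification, in both invocations of condition (ii), of the abstract universal morphism \eqref{homs} with the concrete maps $\pi_X$ and $\textsf{Path}_0X\to 0$; this is a routine but unavoidable unwinding of the pullback's universal property together with the two identifications from the first paragraph.
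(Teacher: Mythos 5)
Your argument is correct and follows essentially the same route as the paper: the same definition of $\zp$ via $\ul{\op{Hom}}_{\tt M}(j,X)$, the same application of condition~(ii) of Definition~\ref{MMC} to $j$ and $p:X\twoheadrightarrow 0$ to get the fibration, and the same use of the trivial cofibration $0\rightarrowtail\mathbb{I}\rightarrowtail\op{Cone}(\mathbb{I})$ to get acyclicity. One small bookkeeping point: your acyclicity computation $0\times_0 0\cong 0$ tacitly uses the third identification $\ul{\op{Hom}}_{\tt M}(0,X)\cong 0$, which is not covered by the two identifications you list; it needs a separate (easy) argument, namely that $-\otimes Y$ preserves the initial object so $0\otimes Y\cong 0$ and hence $\op{Hom}_{\tt M}(Z,\ul{\op{Hom}}_{\tt M}(0,X))\cong\op{Hom}_{\tt M}(0,X)$ is a singleton for every $Z$, as the paper spells out.
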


\begin{proof} For every $A,B\in\tt M\,,$ we have $$\op{Hom}_{\tt M}(A,\ul{\op{Hom}}_{\tt M}(B,0))\cong\op{Hom}_{\tt M}(A\0 B,0)\;,$$ so that there is a unique morphism from every $A\in\tt M$ to $\ul{\op{Hom}}_{\tt M}(B,0)\in\tt M\,,$ which means that for every $B\in\tt M\,,$ we have $$\ul{\op{Hom}}_{\tt M}(B,0)\cong 0\;.$$ The same result holds for $\ul{\op{Hom}}_{\tt M}(0,B)\,.$ Indeed, since left adjoint functors preserve colimits, the functor $-\0 A$ preserves the initial object, so that $A\0 0\cong 0\0 A\cong 0\,.$ Hence, for every $A\in\tt M\,,$ we get $$\op{Hom}_{\tt M}(A,\ul{\op{Hom}}_{\tt M}(0,B))\cong \op{Hom}_{\tt M}(0,B)\;\,,$$ which implies that $$\ul{\op{Hom}}_{\tt M}(0,B)\cong 0\;.$$

If $i:\mathbb{I}\rightarrowtail\op{Cone}(\mathbb{I})\,,$ then for every $A\in\tt M\,,$ we have a morphism $$\ul{\op{Hom}}_{\tt M}(i,A):\ul{\op{Hom}}_{\tt M}(\op{Cone}(\mathbb{I}),A)\to\ul{\op{Hom}}_{\tt M}(\mathbb{I},A)\;$$ and these morphisms are the components of a natural transformation $$\zp:\textsf{Path}_0\Rightarrow\id_{\tt M}\;.$$ Indeed, if $f:A\to B$ is a morphism, then  $f\circ\zp_A=\zp_B\circ\textsf{Path}_0f\,,$ since the left hand side $\ul{\op{Hom}}_{\tt M}(\id_{\mathbb{I}},f)\circ\ul{\op{Hom}}_{\tt M}(i,\id_A)$ and the right hand side $\ul{\op{Hom}}_{\tt M}(i,\id_B)\circ\ul{\op{Hom}}_{\tt M}(\id_{\op{Cone}(\mathbb{I})},f)$ are both equal to $\ul{\op{Hom}}_{\tt M}(i,f)\,,$ as $\ul{\op{Hom}}_{\tt M}$ is a functor on the product category ${\tt M}^{\op{op}}\times{\tt M}$ with composition $$(g,g')\circ_\times(f,f')=(g\circ_{{\tt M}^{\op{op}}} f,g'\circ_{\tt M} f')=(f\circ_{\tt M} g,g'\circ_{\tt M} f')\;,$$ for all ${\tt M}^{\op{op}}$--morphisms $f:A\to B$ and $g:B\to C$ and all $\tt M$--morphisms $f':A'\to B'$ and $g':$ $B'\to C'\,.$\medskip

Let now $A\in{\tt M}$ be a fibrant object. If we apply the axiom $(ii)$ of Definition \ref{MMC} to the cofibration $i:\mathbb{I}\rightarrowtail\op{Cone}(\mathbb{I})$ and the fibration $p:A\twoheadrightarrow 0\,,$ we find that the universal map $$\textsf{Path}_0A=\ul{\op{Hom}}_{\tt M}(\op{Cone}(\mathbb{I}),A)\dashrightarrow \ul{\op{Hom}}_{\tt M}(\op{Cone}(\mathbb{I}),0)\times_{\ul{\op{Hom}}_{\tt M}(\mathbb{I},0)}\ul{\op{Hom}}_{\tt M}(\mathbb{I},A)= A$$ is a fibration. Further, since the morphism $\op{Cone}(\mathbb{I})\to 0$ is a weak equivalence, it follows from the 2-out-of-3 axiom that the morphism $0\to\op{Cone}(\mathbb{I})$ is also a weak equivalence. As $\mathbb{I}$ is cofibrant by definition, this morphism $\iota: 0\rightarrowtail\mathbb{I}\rightarrowtail\op{Cone}(\mathbb{I})$ is a trivial cofibration. If we apply now the axiom $(ii)$ to $\iota$ and $p\,,$ we get that the universal map $$\textsf{Path}_0A=\ul{\op{Hom}}_{\tt M}(\op{Cone}(\mathbb{I}),A)\dashrightarrow \ul{\op{Hom}}_{\tt M}(\op{Cone}(\mathbb{I}),0)\times_{\ul{\op{Hom}}_{\tt M}(0,0)}\ul{\op{Hom}}_{\tt M}(0,A)=0$$ is a trivial fibration, so that $\textsf{Path}_0A$ is acyclic.
\end{proof}

\subsection{Chain complexes as monoidal model category}\label{ChCoII}

In Subsection \ref{ChCoI} we have applied the machinery of this paper to the pointed model category ${\tt Ch}(R)$ of chain complexes in the category $R-\tt Mod$ of left modules over a unital ring $R\,.$ It seemed natural to define the based path space $\op{Path}_0A$ of a complex $A$ as the mapping cone $\op{Mc}(\id_A)[-1]$ of the identity of $A$ shifted by $-1\,.$ In the case where $R$ is a {\it commutative} unital ring $k\,$, we can also consider ${\tt Ch}(R)={\tt Ch}(k)$ as a pointed monoidal model category, apply Proposition \ref{BPSFMMC} and define the based path space functor as the dual $\textsf{Path}_0$ of the cone $\op{Cone}(k)$ of $k\,.$ In this subsection we compute the based path space $\textsf{Path}_0A\,,$ the corresponding loop space, homotopy kernel... and compare the results to those of Subsection \ref{ChCoI}.\medskip

From Equations \eqref{BPSPMMCEq}, \eqref{Conek}, \eqref{Mck} and \eqref{IntHom}, it follows that
$$(\textsf{Path}_0A)_n=\underline{\op{Hom}}_{{\tt Ch}(k)}(\op{Cone}(k),A)_n=\op{Hom}_{{\tt Mod}(k)}(\underbrace{k}_{(0)},A_n)\,\oplus\,\op{Hom}_{{\tt Mod}(k)}(\underbrace{k}_{(1)},A_{n+1})\;,
$$
so that we have the isomorphism of $k$--modules
\be\label{iso}\cI_n:(\textsf{Path}_0A)_n\ni (f_\zm)_\zm = (f_0,f_1)\mapsto a_n+a_{n+1}:=f_0(1)+f_1(1)\in A_n\oplus A_{n+1}\;.\ee
If we read the differential \eqref{IntHom} through the isomorphisms $\cI_n\,,$ we get the differential $$\textsf{d}_A=\cI_{n-1}\circ\ul{d}\circ\cI_n^{-1}$$ given by
$$
\textsf{d}_A\left(\begin{array}{c}
            a_n \\
            a_{n+1}
          \end{array}
\right)= \cI_{n-1}(\ul{d}(f_0,f_1))=\cI_{n-1}(d_A\circ f_0+d_A\circ f_1+(-1)^{n+1}f_0)=
$$
$$
d_Aa_n+d_Aa_{n+1}+(-1)^{n+1}a_n=\left(\begin{array}{cc}
                                                                                                                               d_A & 0 \\
                                                                                                                               (-1)^{n+1} & d_A
                                                                                                                             \end{array}
\right)\left(\begin{array}{c}
               a_n \\
               a_{n+1}
             \end{array}
\right)\;,
$$
since $d_{\op{Cone}(k)}$ vanishes except if $\zm=1$ where it is the identity. Finally,
\be\label{Path}
(\textsf{Path}_0A)_n=A_n\,\oplus\,A_{n+1}\quad\text{and}\quad \textsf{d}_{A,n}=\left(\begin{array}{cc}
                                                                                                                               d_A & 0 \\
                                                                                                                               (-1)^{n+1}\id_A & d_A
                                                                                                                             \end{array}
\right)\;.
\ee

The $A$-component of the natural transformation $\zp:\textsf{Path}_0\Rightarrow\id_{{\tt Ch}(k)}$ is $\zp_A=\ul{\op{Hom}}_{{\tt Ch}(k)}(i,A)$ where the cofibration $i:k\rightarrowtail\op{Cone}(k)$ vanishes except in degree 0 where it is the identity. If we read $(\zp_A)_n$ through the isomorphism $\cI_n$ we get the canonical projection $\zp_1:A_n\,\oplus\,A_{n+1}\to A_n\,.$ The loop space of $A$ is its kernel
\be\label{Omega}
(\textsf{O} A)_n=A_{n+1}\quad\text{and}\quad \textsf{d}_{\textsf{O} A,n}=d_{A,n+1}\;.
\ee
A direct verification shows that given a morphism $f:A\to B\,,$ the homotopy kernel $\textsf{K}_f=\textsf{Path}_0B\times_BA$ is
\be\label{Kaef}
(\textsf{K}_f)_n=A_{n}\,\oplus\,B_{n+1}\quad\text{and}\quad
\textsf{d}_{f,n}=
\begin{pmatrix}
d_A & 0 \\
(-1)^{n+1}f & d_B
\end{pmatrix}\;.
\ee

The mapping cone $\op{Mc}(f)$ of a chain map $f:A\to B$ is defined in \eqref{Mc}. An alternative definition is
\be\label{McAlt}\op{Mc}(f)_n:=A[1]_n\oplus B_n\quad\text{and}\quad \textsf{d}_{\op{Mc}(f),n}:=\left(
                                      \begin{array}{cc}
                                        d_{A} & 0 \\
                                        (-1)^nf & d_B \\
                                      \end{array}
                                    \right)\;\;.
\ee
Equations \eqref{Path}, \eqref{Omega} and \eqref{Kaef} show that the complexes obtained here and in Subsection \ref{ChCoI} are the same graded modules. As for their differentials, we found in \ref{ChCoI} that
$$
\op{d}_{A,n}:=d_{\op{Path}_0A,n}=-d_{\op{Mc}(\id_A),n+1}\;,\quad d_{\zW A,n}=-d_{A,n+1}\;\quad\text{and}\quad \op{d}_{f,n}:=d_{K_f,n}=-d_{\op{Mc}(f),n+1}\;.
$$
Here we find the same differentials, but without the sign change in the right hand side and with the standard differential $d_{\op{Mc}(f)}$ replaced by the previous alternative differential $\textsf{d}_{\op{Mc}(f)}\,:$
$$
\textsf{d}_{A,n}:=d_{\textsf{Path}_0A,n}=\textsf{d}_{\op{Mc}(\id_A),n+1}\;,\quad \textsf{d}_{\textsf{O}A,n}=d_{A,n+1}\;\quad\text{and}\quad \textsf{d}_{f,n}:=d_{\textsf{K}_f,n}=\textsf{d}_{\op{Mc}(f),n+1}\;.
$$
These slight differences are of course completely irrelevant.

\section{Follow up questions}\label{Conclusions}

In a triangulated category, every morphism $f:A\to B$ has a cone $B\to C(f)\to A[1]$ such that $A\to B\to C(f)\to A[1]$ is a distinguished triangle. However, the cone $C$ is not a functor. It has been mentioned in the literature \cite{GM} that this drawback is a sign that the axioms of a triangulated category are suboptimal. More precisely, if $C(f)[-1]\to A\to B$ and $C(f')[-1]\to A'\to B'$ are distinguished triangles together with a commutative square $S:=(A,B,A',B')\,,$ the is no unique induced map $C(S):C(f)[-1]\to C(f')[-1]$ that makes $C$ a functor. We expect that Theorem \ref{EquivHoCat} and Proposition \ref{Explicit} can be used to suggest a definition of triangulated categories with a functorial cone.\medskip

In \cite{KTRCR, HAC} and \cite{PP}, Di Brino and two of the authors of the present work have taken up ideas from \cite{BD04, CG1, P11, Paugam1, TV05, TV08, Vino} and have introduced homotopical algebraic geometry over the ring of differential operators as a suitable framework for investigating the solution space of partial differential equations modulo symmetries. The implementation of the associated research program requires that a certain quintuplet be a homotopical algebraic geometric context (HAGC) in the sense of \cite{TV08}. We are convinced that the theory of homotopy fiber sequences, which we have detailed in this paper, will enable us to prove the HAGC theorem and thus to take an important step towards fully working through the above program.

{}
\vfill
{\emph{email:} {\sf alisa.govzmann@uni.lu}; \emph{email:} {\sf damjan.pistalo@uni.lu}; \emph{email:} {\sf norbert.poncin@uni.lu}.}

\end{document}